\newcommand{\ga}{\alpha}
\newcommand{\gb}{\beta}
\newcommand{\gd}{\delta}
\newcommand{\gep}{\epsilon}
\newcommand{\gk}{\kappa}
\newcommand{\gl}{\lambda}
\newcommand{\go}{\omega}
\newcommand{\gs}{\sigma}
\newcommand{\vt}{\vartheta}
\newcommand{\gD}{\Delta}
\newcommand{\gL}{\Lambda}
\newcommand{\gO}{\Omega}
\newcommand{\cA}{\mathcal{A}}
\newcommand{\cB}{\mathcal{B}}
\newcommand{\cC}{\mathcal{C}}
\newcommand{\cD}{\mathcal{D}}
\newcommand{\cE}{\mathcal{E}}
\newcommand{\cF}{\mathcal{F}}
\newcommand{\cG}{\mathcal{G}}
\newcommand{\cH}{\mathcal{H}}
\newcommand{\cI}{\mathcal{I}}
\newcommand{\cL}{\mathcal{L}}
\newcommand{\cM}{\mathcal{M}}
\newcommand{\cT}{\mathcal{T}}
\newcommand{\cX}{\mathcal{X}}
\newcommand{\1}{1}
\newcommand{\B}{\mathbb{B}}
\newcommand{\C}{\mathbb{C}}
\newcommand{\N}{\mathbb{N}}
\newcommand{\Q}{\mathbb{Q}}
\newcommand{\R}{\mathbb{R}}
\newcommand{\X}{\mathbb{X}}
\newcommand{\Z}{\mathbb{Z}}
\newcommand{\p}{\partial}
\newtheorem{theorem}{Theorem}[section]
\newtheorem{proposition}[theorem]{Proposition}
\newtheorem{lemma}[theorem]{Lemma}
\newtheorem{corollary}[theorem]{Corollary}
\newtheorem{definition}[theorem]{Definition}
\theoremstyle{remark}
\newtheorem{remark}[theorem]{Remark}
\newtheorem{remarks}[theorem]{Remarks}
\newcommand{\diag}[1]{\text{diag}(#1)}
\newcommand{\erf}{\mathop{\operator@font erf}\nolimits}
\newcommand{\erfc}{\mathop{\operator@font erfc}\nolimits}
\newcommand{\sign}{\mathop{\operator@font sign}\nolimits}
\newenvironment{enum_a}
    {\begin{enumerate}}
    {\end{enumerate}}
\newenvironment{enum_i}
    {\begin{enumerate}}
    {\end{enumerate}}
\newif\if@golden  \@goldentrue
\newcommand{\f@ctor}{1}
\newlength{\aiv@width}  \setlength{\aiv@width}{210mm}
\newlength{\aiv@height} \setlength{\aiv@height}{297mm}
\newlength{\tmp@width}  \setlength{\tmp@width}{\aiv@width}
\newlength{\tmp@height} \setlength{\tmp@height}{\aiv@height}
\if@golden\setlength{\textwidth}{33pc}
  \else\setlength{\textwidth}{36pc}\fi
\relax\setlength{\textwidth}{29pc}\or
\or\setlength{\textwidth}{33pc}\fi
\relax\setlength{\textwidth}{31pc}\or
\or\setlength{\textwidth}{35pc}\fi\fi
\relax\renewcommand{\f@ctor}{53}
  \or\renewcommand{\f@ctor}{46}\or\renewcommand{\f@ctor}{43}\fi
\relax\renewcommand{\f@ctor}{51}\or
  \renewcommand{\f@ctor}{45}\or\renewcommand{\f@ctor}{42}\fi\fi
\relax \renewcommand{\f@ctor}{46}
  \or\renewcommand{\f@ctor}{43}\or\renewcommand{\f@ctor}{43}\fi
\relax\renewcommand{\f@ctor}{43}
  \or\renewcommand{\f@ctor}{40}\or\renewcommand{\f@ctor}{40}\fi\fi\fi
\let\comp\circ
\newcommand{\op}{\ensuremath^\circ}
\newcommand{\cGo}{\ensuremath\cG\op}
\newcommand{\sgl}{\ensuremath\sqrt{2\gl}}
\newcommand{\da}{\ensuremath\downarrow}
\newcommand{\cond}{\ensuremath\,\big|\,}
\newcommand{\Co}{\ensuremath C_0(\cG)}
\newcommand{\Coo}{\ensuremath C_0^{0,2}(\cG)}
\newcommand{\Cii}{\ensuremath C_0^2(\cG)}
\newcommand{\Coe}{\ensuremath C_0(E)}
\newcommand{\Ra}{$\Rightarrow$\space}
\newcommand{\limep}{\ensuremath\lim_{\gep\da 0}}
\newcommand{\eval}{\mathop{\big|}\nolimits}
\newcommand{\Eval}{\mathop{\Big|}\nolimits}
\newcommand{\CD}{\ensuremath C_{\gD}}
\newcommand{\Ieqref}[1]{\textup{\tagform@{I.\ref{I_#1}}}}
\newcommand{\IIeqref}[1]{\textup{\tagform@{II.\ref{II_#1}}}}
\newcommand{\Rbp}{\ensuremath\overline{\R}_+}
\newcommand{\Xil}[1][n-1]{\ensuremath\Xi^{\le #1}}
\newcommand{\cCl}[1][n-1]{\ensuremath\cC^{\le #1}}
\newcommand{\Ql}[1][n-1]{\ensuremath Q^{\le #1}}
\newcommand{\Xiu}[1][n]{\ensuremath\Xi^{\ge #1}}
\newcommand{\cCu}[1][n]{\ensuremath\cC^{\ge #1}}
\newcommand{\Qu}[1][n]{\ensuremath Q^{\ge #1}}
\newcommand{\bcB}{\ensuremath B}
\newcommand{\bRV}{\ensuremath B(\R_+\times V_c)}
\newcommand{\bcBG}[1][r]{\ensuremath B(\cG^#1)}
\newcommand{\Rrp}[1][r]{\ensuremath \hat \R^{#1}_+}
\newcommand{\sh}{\ensuremath\text{shad}}
\newlength{\BCs@ze}
\newlength{\BCsh@ft}
\DeclareFixedFont\MT{OMS}{cmsy}{m}{n}{\BCs@ze}    
\newcommand{\BigCart}{\ensuremath\mathop{\raisebox{\BCsh@ft}{{\MT\char"02}}}}
\def\pdftitle{\@gobble}
\numberwithin{equation}{section}
\date{April 20, 2012}
\dedicatory{Dedicated to Elliott Lieb on the occasion of his 80th birthday}
\title[Brownian Motions on Metric Graphs]{%
Brownian Motions on Metric Graphs}
\author[V.~Kostrykin]{Vadim Kostrykin}
\address{Vadim Kostrykin\newline
Institut f\"ur Mathematik\newline
Johannes Gutenberg--Universit\"at\newline
D--55099 Mainz, Germany}
\email{kostrykin@mathematik.uni-mainz.de}
\author[J.~Potthoff]{J\"urgen Potthoff}
\address{J\"urgen Potthoff\newline
Institut f\"ur Mathematik, Universit\"at Mannheim\newline
D--68131 Mann\-heim, Germany}
\email{potthoff@math.uni-mannheim.de}
\author[R.~Schrader]{Robert Schrader}
\address{Robert Schrader\newline
Institut f\"{u}r Theoretische Physik\newline
Freie Universit\"{a}t Berlin, Arnimallee~14\newline
D--14195 Berlin, Germany}
\email{schrader@physik.fu-berlin.de}
\subjclass[2010]{60J65, 60J45, 60H99, 58J65, 35K05, 05C99}
\keywords{Metric graphs, Brownian motion, Feller processes,
Feller's theorem}
\begin{document}
\begin{abstract}
Brownian motions on a metric graph are defined. Their generators are	
characterized as Laplace operators subject to Wentzell boundary at every
vertex. Conversely, given a set of Wentzell boundary conditions at the
vertices of a metric graph, a Brownian motion is constructed pathwise on
this graph so that its generator satisfies the given boundary conditions.
\end{abstract}

\maketitle
\thispagestyle{empty}

\section{Introduction and Main Results} \label{sect_1}

Since the groundbreaking works of Bachelier~\cite{Ba00}, Einstein~\cite{Ei05, Ei06},
and Smo\-lu\-chow\-ski~\cite{Sm06},%
\footnote{\footnotesize It seems that Schr\"odinger \cite{Sc15} was the first
to introduce the notion of a \emph{first passage time}, (in German \emph{Erstpassagezeit}),
i.e., a special type of \emph{stopping time}, in the continuous time context
of the Brownian motion process. It is striking that this article and the parallel work of
Smo\-lu\-chow\-ski~\cite{Sm15} has practially gone unnoticed in the physics literature,
while  being cited by statisticians, e.g., \cite{Tw45, FoCh78}.}\space
the theory of the Brownian movement had been established as a central,
recurrent theme in mathematics and physics. In the sequel the Brownian phenomenon stimulated the
development of many important ideas and theories. A complete description of the history is
beyond the scope of this introduction, but in keywords  we want to mention the following:	
The construction of Wiener space~\cite{Wi23} and Wiener's approach of statistical mechanics
and chaos~\cite{Wi38}, It\^o's theory of stochastic integration~\cite{It44} and stochastic
differential equations~\cite{It46}, L\'evy's analysis of the fine structure of
Brownian motion and his theory of the Brownian local time~\cite{Le37, Le48},
Feynman's path integral~\cite{Fe48} with its new view towards quantum mechanics,
Kac' work on path integrals~\cite{Ka49, Ka50}. Towards the middle of the last century
there were the works by Feller~\cite{Fe52, Fe54, Fe54a} and It\^o--McKean~\cite{ItMc63, ItMc74} on
Brownian motions on intervals (see also below), Gross' abstract Wiener spaces~\cite{Gr67},
Nelson's work on functional integration and on the relation between quantum and
stochastic dynamics~\cite{Ne54, Ne64, Ne67, Ne73}, giving new momentum to Euclidean and
to constructive quantum field theory, e.g.~\cite{Sch58, Sch59, Sy69, Si74, GlJa81} and
nonrelativistic quantum physics, e.g.~\cite{Si79}. Further we want to mention  the asymptotics
of Wiener integrals and large deviation theory~\cite{Sc65, DoVa75}, the theory of Dirichlet forms~\cite{Fu80, Si75, AlR89}, the development of the
Malliavin~\cite{Ma78, Ma97} and Hida calculi~\cite{Hi75, HiKu93}, and Bismuth's approach
to the Atiyah--Singer index theorem~\cite{Bi84, Bi86}. In addition, there were important
developments in other fields, such as engineering, biology or mathematical finance, which
were triggered by the theory of Brownian motion.

The present article is directly linked to the above quoted works by Feller and It\^o--McKean.
So we want to sketch these in little more detail.
In his pioneering articles~\cite{Fe52, Fe54, Fe54a}, Feller
raised the problem of characterizing and constructing all Brownian motions on a finite or on a
semi-infinite interval. In the sequel this problem stimulated very important
research in the field of stochastic processes, and the problem of constructing all
such Brownian motions found a complete solution in the work of It\^o and
McKean~\cite{ItMc63, ItMc74} via the combination of the theory of the local time of
Brownian motion~\cite{Le48}, and the theory of (strong) Markov
processes~\cite{Bl57, Dy61, Dy65a, Dy65b, Hu56}.
The central result of these investigations is that the most general Brownian motion on
the half line $\R_+$ is determined by a generator which is (one half times) the Laplace
operator on $\R_+$ with \emph{Wentzell boundary conditions} at the origin, i.e.,
linear combinations of the function value with the values of the first and second
derivative at the origin (with coefficients satisfying certain restrictions, see below).
It\^o and McKean showed in~\cite{ItMc63} --- partly based on the ideas of
Feller~\cite{Fe52, Fe54, Fe54a} --- how to construct the paths of such
motions: The boundary conditions are implemented by a combination of reflection
at the origin with a slow down and killing, both on the scale of the local time
at zero. The ideas contained this article became one of the roots of their
highly influential book~\cite{ItMc74}.

In recent years, there has been a growing interest in \emph{metric graphs}, that is,
piecewise linear spaces with singularities formed by the vertices of the
graph. Metric graphs arise naturally as models in many domains, such as
physics, chemistry, computer science and engineering to mention just a few --- we refer
the interested reader to~\cite{Ku04} for a review of such models and for further
references. Therefore it is natural to extend Feller's problem to metric graphs.
Stochastic processes, in particular Brownian motions and diffusions, on locally
one-dimensional structures, notably on graphs and networks, have already been studied
in a number of articles of which we want to mention~\cite{BaCh84, DeJa93, EiKa96, FrSh00,
FrWe93, Fr94, Gr99, Kr95} in this context.

In previous articles~\cite{KoSc99, KoSc00, KoSc06, KoSc06a}, two of the current authors
studied the self-ad\-joint\-ness of Laplace operators on metric graphs and discussed their spectra.
This allowed a discussion of the associated quantum scattering matrices.
Further properties of the semigroups generated by Laplace operators on metric graphs, including
a Selberg--type trace formula and the problem whether these semigroups are
positivity preserving or contractive, have been studied in \cite{KoPo07d, KoPo09c}.
This is one of the motivations of our present study since semigroups with these properties
typically show up in Markov processes.  Below we will return to this point,
see remark~\ref{rem_sa_bc}. The wave equation on metric graphs and its finite propagation
speed has been discussed in~\cite{KoPo11}. For suitable Laplacians free quantum fields on
metric graphs satisfying the Klein-Gordon equation and Einstein causality were
constructed in ~\cite{Sch09}.

In~\cite{CPBMSG} the authors have constructed the paths of all possible Brownian
motions (in the sense defined below) on single vertex graphs using the well-known Walsh
process \cite{Wa78}, \cite{BaPi89} (see also \cite{ BaCh84, Ro83, Sa86a, Sa86b, Va85})
as the starting point. Furthermore, the relation to the quantum mechanical scattering is
discussed in detail there. The latter article provides an essential input for the
construction of all possible Brownian motions on a general metric graph in the sense
of definition~\ref{def1i} (see below) which we carry out here.

The article is organized in the following way. In section~\ref{sect_mr} we set	
up our framework and prove our main results: Theorem~\ref{thm1i} characterizes
all possible Brownian motions (in the sense of definition~\ref{def1i}) on a metric
graph~$\cG$ in terms of Wentzell boundary conditions at the vertices. Conversely,
theorem~\ref{thm1ii} states that for every choice of a set of Wentzell boundary
conditions at the vertices as described in theorem~\ref{thm1i}, one can construct
a Brownian motion on~$\cG$ implementing these conditions. Theorem~\ref{thm1i} is
proved in section~\ref{sect2}. As a preparation of the proof of theorem~\ref{thm1ii}
we consider in section~\ref{sect3} the situation where one is given two
metric graphs $\cG_1$, $\cG_2$ with Brownian motions $X_2$, $X_2$ in the sense of
definition~\ref{def1i} thereon. If one joins some of the external edges of $\cG_1$ and
$\cG_2$ to form a new metric graph $\cG$, it is shown how to construct the
paths of a Brownian motion $X$ on $\cG$ by appropriately gluing the paths of
$X_1$ and $X_2$ together. Theorem~\ref{thm1ii} is proved in section~\ref{sect4}
via the procedure of section~\ref{sect3} and the results in~\cite{CPBMSG},
where the paths of Brownian motions on star graphs are constructed with
methods similar to those of Feller~\cite{Fe52, Fe54, Fe54a} and
It\^o--McKean~\cite{ItMc63, ItMc74}. The article is concluded in section~\ref{sect5}
by a discussion of the inclusion of tadpoles. Furthermore, there two appendices: one with
a technical result on the crossover times which is used in section~\ref{sect3}, the other
about Feller semigroups and resolvents.

Given these results, it would be interesting to see whether known results
for special cases of Brownian motion or diffusions on metric graphs
can be extended to all Feller processes. For example, an arcsine law has been proved
in~\cite{BaPi89a} for the case of a Walsh process on a single vertex graph, for the case
of a general metric graph we refer to~\cite{De02, BDe09} (for a discussion of local time
distributions see also~\cite{CoDe02}). In a similar vein: What about occupation times on
edges for the case of general (local) boundary conditions of the
type~\eqref{eq1iii} at the vertices? Can one say something about large deviations
as done for example for Brownian motions without killing and more generally for
conservative diffusion processes in~\cite{FrSh00}? What form does the It\^o formula
take in the case of a diffusion process on a metric graph with a generator subject to
the boundary conditions~\eqref{eq1iii}?

\vspace{1.5\baselineskip}
\noindent
\textbf{Acknowledgement.}
The authors thank Mrs.~and Mr.~Hulbert for their warm hospitality at the
\textsc{Egertsm\"uhle}, Kiedrich, where part of this work was done.
J.P.\ gratefully acknowledges fruitful discussions with O.~Falkenburg,
A.~Lang and F.~Werner. We owe special thanks to O.~Falkenburg	
for pointing out reference~\cite{Sc15} to us. The authors also thank the anonymous
referee for pointing out further references. R.S.~thanks the organizers of  the
\emph{Chinese--German Meeting on Stochastic Analysis and Related Fields},  Beijing,
May 2010, where some of the material of this article was presented.

\section{Main Results}	\label{sect_mr}

In the present article we shall only treat \emph{finite} metric graphs, and consider a
metric graph $(\cG,d)$ as being defined by a finite collection of finite or semi-infinite
closed intervals, some of their endpoints --- the \emph{vertices} of the graph --- being
identified. See figure~\ref{fig1} for an example of a simple, typical metric graph.
The metric $d$ is then defined in the canonical way as the length of a shortest path
between two points along the \emph{edges} (formed by the intervals),
and the length along each edge is measured with the usual metric on the real line.
\begin{figure}[ht]
\begin{center}
    \includegraphics[scale=1]{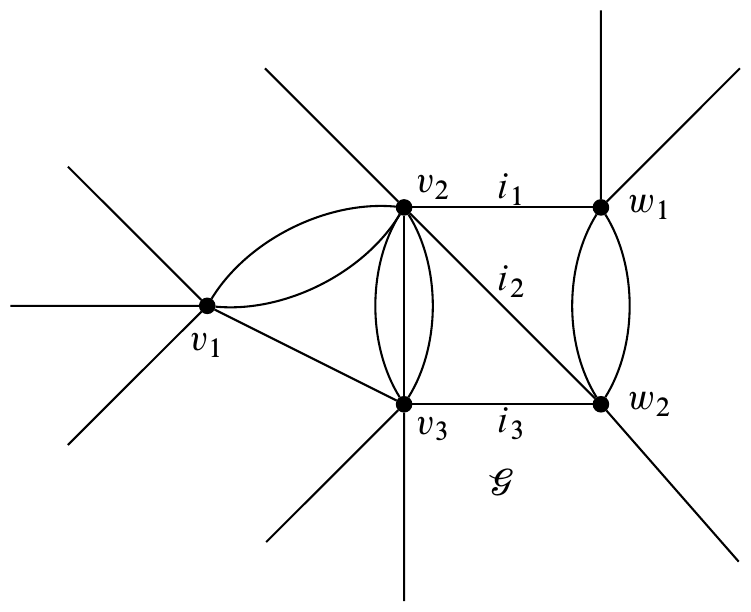}
    \caption{A metric graph $\cG$ with $5$ vertices, $9$ external and
                $11$ internal edges.}  \label{fig1}
\end{center}
\end{figure}
For a formal definition of metric graphs within the context of graph theory we
refer the interested reader, e.g., to~\cite{KoSc99, KoSc00}. Within that context our
definition above means that we identify --- as we may without any loss of generality
--- an abstract metric graph with its \emph{geometric graph} (see, e.g., \cite{Ju05}).
Moreover, in the sequel it will often be convenient and without any danger of confusion
to identify an edge of a metric graph with the corresponding interval of the real line.
Edges isomorphic to $\R_+$ are called \emph{external}, while those isomorphic to a
finite interval --- that is, those edges connecting two vertices --- are called
\emph{internal}. The set of vertices of $\cG$ is denoted by $V$, the set of
internal edges by $\cI$ and the set of external edges by $\cE$. Moreover we
set $\cL=\cI\cup\cE$. The combinatorial structure of the graph $\cG$ is described
by a map $\gd$ from  $\cL$ into $V\cup (V\times V)$ which associates with every
internal edge $i$ an ordered pair $\bigl(\p^-(i), \p^+(i)\bigr)\in V\times V$,
$\p^-(i)$ is called the \emph{initial vertex} of $i$ while $\p^+(i)$ is its
\emph{terminal vertex}. If $i\in\cI$ is isomorphic to the interval $[a,b]$ then $\p^-(i)$
corresponds to $a$, while $\p^+(i)$ corresponds to~$b$. An external edge $e$ is mapped
by $\p$ to $\p(e)\in V$ which is the vertex to which $e$ is incident, and also in this case
we call the vertex the \emph{initial vertex} of~$e$.

For the definition of a Brownian motion on the metric graph $(\cG,d)$ we take a
standpoint similar to the one of Knight~\cite{Kn81} for the semi-line or a finite
interval:

\begin{definition}  \label{def1i}
A \emph{Brownian motion} on a metric graph $(\cG,d)$ is a diffusion process
$(X_t,\,t\in\R_+)$ such that when $X$ starts on an edge $e$ of $\cG$ then the
process $X$ with absorption in the vertex, vertices respectively, to which $e$ is
incident is equivalent to a standard one dimensional Brownian motion on the interval
$e$ with absorption in the endpoint(s) of $e$.
\end{definition}

\begin{remarks}  \label{def1ii}
By saying that $X$ is a \emph{diffusion process} we mean that $X$ is a normal,
strong Markov process (in the sense of~\cite{BlGe68}), a.s.\ with paths which are
c\`adl\`ag and continuous on $[0,\zeta)$, where $\zeta$ is the life time of $X$.
We shall always assume that the filtration for $X$ satisfies the ``usual conditions''.
With the help of the well-known first passage time formula (e.g., \cite{Ra56} or
\cite{ItMc74}) for the resolvent of $X$ it is not hard to show as in~\cite{Kn81}
that every Brownian motion on a metric graph $\cG$ is a Feller process.
\end{remarks}

The first crucial problem is then to characterize the behavior of the stochastic process
when it reaches one of the vertices of the graph $\cG$, or in other words, the
characterization of the boundary conditions at the vertices of the Laplace operator
which generates the stochastic process. We want to mention in passing that in an
$L^2$-setting all boundary conditions for Laplace operators on $\cG$ which make them
self-adjoint have been characterized in~\cite{KoSc99, KoSc00}. The first main
result of the present paper is Feller's theorem for metric graphs. In order to state
this theorem we have to introduce some notation.

The Banach space of real valued, continuous functions on $\cG$ vanishing at infinity,
equipped with the sup-norm, is denoted by $C_0(\cG)$. We let $\gD$ denote a universal
cemetery point for all stochastic processes considered, and make the usual convention
that every $f\in C_0(\cG)$ is extended to $\cG\cup\{\gD\}$ by setting $f(\gD)=0$.

Consider the generator $A$ of $X$ on $C_0(\cG)$ with domain $\cD(A)$. Define the space
$\Cii$ to consist of those functions $f$ in $C_0(\cG)$ which are twice continuously
differentiable in the \emph{open interior} $\cG^\circ = \cG\setminus V$ of $\cG$, and
which are such that their second derivative $f''$ extends from $\cG^\circ$ to a
function in $C_0(\cG)$.

The next lemma, which can be proved with the fundamental theorem of calculus and the
mean value theorem, states some of the properties of functions in $\Cii$.
$\cL(v)$ denotes the set of edges incident with $v\in V$.

\begin{lemma}		\label{lemC02}
Assume that $f$ belongs to $\Cii$, and consider $v\in V$, $l\in\cL(v)$. Then the inward directional
derivatives $f^{(i)}(v_l)$, $i=1$, $2$, of $f$ of first and second order at $v$ in
direction of the edge $l$ exist, and
\begin{align}	
	f'(v_l) &= \begin{cases}\displaystyle
				\phantom{-}\lim_{\xi\to v,\,\xi\in l^\circ} f'(\xi),	
					& \text{if $v$ is an initial vertex of $l$,}\label{inw-deri}\\[2ex]
				\displaystyle
				-\lim_{\xi\to v,\,\xi\in l^\circ} f'(\xi),	
					& \text{if $v$ is a terminal vertex of $l$,}
		      \end{cases}\\[2ex]
	f''(v_l) &= \lim_{\xi\to v,\,\xi\in l^\circ} f''(\xi)\label{inw-derii} 		
\end{align}
hold true. Moreover, $f'$ (defined on $\cG^\circ$) vanishes at infinity.
\end{lemma}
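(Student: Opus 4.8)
The plan is to prove the three assertions separately, exploiting the definition of $\Cii$: the second derivative $f''$, a priori defined only on $\cG^\circ$, extends to a function in $\Co$, and in particular to a function that is continuous up to each vertex. First I would fix $v\in V$ and an incident edge $l\in\cL(v)$, and identify $l$ with an interval of the real line so that on $l^\circ$ the function $f$ is genuinely twice continuously differentiable in the ordinary sense. The key structural fact is that by hypothesis $\lim_{\xi\to v,\,\xi\in l^\circ} f''(\xi)$ exists (it equals the value of the continuous extension of $f''$ at $v$), so I may call this limit $c$.

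Next I would establish \eqref{inw-derii} together with the existence of the first-order inward derivative. The idea is that continuity of $f''$ up to the endpoint lets me control $f'$ via the fundamental theorem of calculus. Concretely, pick an interior base point $\xi_0\in l^\circ$; for $\xi\in l^\circ$ one has
\begin{equation*}
	f'(\xi) = f'(\xi_0) + \int_{\xi_0}^{\xi} f''(s)\,ds,
\end{equation*}
and since $f''$ extends continuously to $v$, the right-hand side has a finite limit as $\xi\to v$ along $l^\circ$. This shows the one-sided limit of $f'$ at $v$ exists; the inward directional derivative $f'(v_l)$ is then this limit up to a sign, the sign being $+$ or $-$ according to whether the interior of $l$ lies to the right or to the left of $v$ under the chosen parametrization, which is exactly the initial/terminal vertex dichotomy of \eqref{inw-deri}. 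For the existence of $f'(v_l)$ itself as a genuine one-sided derivative of $f$ at $v$, I would apply the mean value theorem: the difference quotient $\bigl(f(\xi)-f(v)\bigr)/(\xi-v)$ equals $f'(\eta)$ for some intermediate $\eta$, and letting $\xi\to v$ forces $\eta\to v$, so the difference quotient converges to the same limit as $f'$; this simultaneously yields \eqref{inw-deri} and the analogous statement \eqref{inw-derii} for the second inward derivative (applying the same argument to $f'$ in place of $f$).

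Finally, for the claim that $f'$ vanishes at infinity, I would use that $\cG$ is a finite metric graph, so the only way to escape to infinity is along one of the finitely many external edges $e\cong\R_+$. On such an edge both $f$ and $f''$ lie in $\Co$ and hence tend to $0$ at infinity. Writing $f'(\xi)=f'(\xi_0)+\int_{\xi_0}^{\xi}f''(s)\,ds$ again, the existence of $\lim_{\xi\to\infty}f(\xi)=0$ together with $f''\to 0$ forces $f'\to 0$: if $f'$ did not tend to $0$ there would be a sequence on which $|f'|$ stays bounded below, and since $f''\to 0$ the derivative would be eventually of one sign and bounded away from $0$ on a long interval, making $f$ unbounded or drift away from its limit, a contradiction. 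I expect the only genuinely delicate point to be this last step, namely rigorously ruling out a persistent nonzero slope at infinity from the two conditions $f\to 0$ and $f''\to 0$ alone; the cleanest route is a mean-value/telescoping estimate showing $f'(\xi)=f(\xi+1)-f(\xi)-\tfrac12\int_0^1(1-t)\bigl(f''(\xi+t)+f''(\xi-t)\bigr)\,dt$-type identity or, more simply, that $f(\xi+1)-f(\xi)=\int_0^1 f'(\xi+t)\,dt$ combined with near-constancy of $f'$ over unit intervals (guaranteed by $f''\to0$) pins $f'(\xi)$ to $f(\xi+1)-f(\xi)\to 0$.
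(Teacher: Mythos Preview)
Your proposal is correct and matches the approach the paper indicates: the authors do not spell out a proof but simply remark that the lemma ``can be proved with the fundamental theorem of calculus and the mean value theorem,'' which is precisely the combination you employ (FTC to obtain the one-sided limits of $f'$, MVT to identify them with the inward directional derivatives, and the $f(\xi+1)-f(\xi)$ trick for vanishing of $f'$ at infinity). Your write-up is in fact more detailed than what the paper provides.
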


\begin{remark}	\label{remC02}
If $f\in\Cii$ then by definition of $\Cii$, $f''(v_k) = f''(v_l)$ for every
$v\in V$, and all $k$, $l\in\cL(v)$, and we shall simply write $f''(v)$.
On the other hand, in general $f'(v_k)\ne f'(v_l)$ for $k\ne l$.
\end{remark}

Let $V_\cL$ denote the subset of $V\times\cL$ given by
\begin{equation*}
    V_\cL = \bigl\{(v,l),\,v\in V \text{ and }l\in\cL(v)\bigr\}.
\end{equation*}
We shall also write $v_l$ for $(v,l)\in V_\cL$. Consider data
of the following form
\begin{equation}    \label{eq1i}
\begin{split}
    a &= (a_v,\,v\in V)\in [0,1)^V\\
    b &= (b_{v_l},\,v_l\in V_\cL) \in [0,1]^{V_\cL}\\
    c &= (c_v,\,v\in V)\in [0,1]^V
\end{split}
\end{equation}
subject to the condition
\begin{equation}    \label{eq1ii}
    a_v + \sum_{l\in \cL(v)} b_{v_l} + c_v =1,\qquad \text{for every $v\in V$}.
\end{equation}
Define a subspace $\cH_{a,b,c}$ of $\Cii$ as the space of those functions
$f$ in $\Cii$ which at every vertex $v\in V$ satisfy the \emph{Wentzell boundary
condition}
\begin{equation}\label{eq1iii}
    a_v f(v) - \sum_{l\in\cL(v)} b_{v_l}f'(v_l)+ \frac{1}{2}\,c_v f''(v)=0.
\end{equation}

Now we can state our first main result:

\begin{theorem}[Feller's theorem for metric graphs] \label{thm1i}
Let $X$ be a Brownian motion on $\cG$, and let $A$ be its generator on $C_0(\cG)$
with domain $\cD(A)$. Then there are $a$, $b$, $c$ as in~\eqref{eq1i}, \eqref{eq1ii},
so that $\cD(A)=\cH_{a,b,c}$. For $f\in\cD(A)$, $A f = 1/2 f''$.
\end{theorem}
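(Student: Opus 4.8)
The plan is to break the claim into four parts: the interior identification $Af=\tfrac12 f''$, the regularity statement $\cD(A)\subseteq\Cii$, the derivation of the Wentzell condition \eqref{eq1iii} at each vertex (with the constraints \eqref{eq1i}--\eqref{eq1ii}), and finally the reverse inclusion $\cH_{a,b,c}\subseteq\cD(A)$. First I would treat the interior. Fix an edge $l$ and a point $x\in l^\circ$, and let $\sigma$ be the first exit time of $X$ from a small open subinterval $I\Subset l^\circ$ around $x$. By Definition~\ref{def1i}, the stopped process $X^\sigma$ is a standard one–dimensional Brownian motion absorbed at the endpoints of $I$, whose generator is $\tfrac12\,d^2/dx^2$. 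For $f\in\cD(A)$ the process $f(X_t)-\int_0^t Af(X_s)\,ds$ is a $P_x$–martingale; evaluating this on $[0,\sigma\wedge t]$, dividing by $t$ and letting $t\to 0$, and comparing with the generator of $X^\sigma$, shows that on $I$ the function $f$ lies in the domain of $\tfrac12\,d^2/dx^2$ with $Af=\tfrac12 f''$. In particular $f$ is twice continuously differentiable on $\cG^\circ$ with $f''=2Af$; since $Af\in C_0(\cG)$ is continuous up to and including the vertices, $f''$ extends from $\cG^\circ$ to a function in $C_0(\cG)$. Hence $f\in\Cii$, Lemma~\ref{lemC02} guarantees that the inward derivatives $f'(v_l)$ and $f''(v)$ exist, and the formula $Af=\tfrac12 f''$ holds.

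The core of the argument, and the main obstacle, is the derivation of \eqref{eq1iii} at a fixed vertex $v$, together with the verification that it is the \emph{only} constraint beyond membership in $\Cii$. Here I would argue by localization: by the strong Markov property and Definition~\ref{def1i}, the behaviour of $X$ in a star–shaped neighbourhood of $v$ (the union of short initial segments of the edges $l\in\cL(v)$, absorbed at their outer endpoints) depends only on $v$, reducing the analysis to the single–vertex situation of~\cite{CPBMSG}. Concretely, for $g\in C_0(\cG)$ put $u=(\lambda-A)^{-1}g\in\cD(A)$, let $\tau$ be the first hitting time of $v$, and use the strong Markov property at $\tau$ to obtain the first–passage decomposition (as in~\cite{Kn81})
\begin{equation*}
u(\xi)=u^{\mathrm{abs}}(\xi)+E_\xi\!\bigl[e^{-\lambda\tau}\bigr]\,u(v),\qquad \xi\in l^\circ\ \text{near}\ v,
\end{equation*}
where $u^{\mathrm{abs}}$ is the resolvent of $X$ killed at $\tau$, which near $v$ coincides with that of Brownian motion killed at $v$ and is therefore explicit. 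Differentiating in $\xi$, letting $\xi\to v$, and using $u''(v)=2\bigl(\lambda u(v)-g(v)\bigr)$ expresses each inward derivative $u'(v_l)$ through $u(v)$, $u''(v)$ and the known boundary data of $u^{\mathrm{abs}}$. The single linear relation that the strong Markov exit mechanism of $X$ at $v$—killing, entrance into the individual edges, and sojourn at $v$—imposes on these quantities then collapses to one identity of the form \eqref{eq1iii}, with coefficients $a_v,b_{v_l},c_v$ that depend on the process but not on $g$ (equivalently, not on $f=u$). Identifying $a_v$ with the killing rate, $b_{v_l}$ with the entrance weights into the edges, and $c_v$ with the stickiness of $X$ at $v$ is the delicate step, and is exactly the content of the single–vertex construction imported from~\cite{CPBMSG}.

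The admissible ranges then follow from general principles. Nonnegativity of $a_v$, $b_{v_l}$ and $c_v$ is a consequence of the positive maximum principle satisfied by the generator of the Feller process $X$ (Remarks~\ref{def1ii}): probing \eqref{eq1iii} with functions in $\cD(A)$ that prescribe the signs of $f(v)$, $f'(v_l)$ and $f''(v)$ forces each coefficient to be of one sign. After normalizing the relation so that \eqref{eq1ii} holds, this places $(a,b,c)$ in the ranges \eqref{eq1i}, the constraint on $a_v$ encoding that the purely absorbing normalization is ruled out. This establishes $\cD(A)\subseteq\cH_{a,b,c}$ for a specific choice of $a,b,c$.

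It remains to prove $\cH_{a,b,c}\subseteq\cD(A)$. Since $A$ is the generator of a Feller semigroup, $\lambda-A$ maps $\cD(A)$ bijectively onto $C_0(\cG)$ for every $\lambda>0$, and on $\cD(A)$ one has $A=\tfrac12(\cdot)''$. Let $B$ denote the operator $f\mapsto\tfrac12 f''$ on $\cH_{a,b,c}$; then $A\subseteq B$. The key point is injectivity of $\lambda-B$: a solution of $\tfrac12 f''=\lambda f$ on $\cG$ satisfying \eqref{eq1iii} at every vertex and vanishing at infinity on the external edges must vanish, which I would obtain from the maximum principle using the sign constraints \eqref{eq1i}--\eqref{eq1ii}. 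Given $f\in\cH_{a,b,c}$, set $g=(\lambda-B)f$ and pick $\tilde f\in\cD(A)$ with $(\lambda-A)\tilde f=g$; since $\tilde f\in\cD(A)\subseteq\cH_{a,b,c}$ and $A\tilde f=B\tilde f$, injectivity of $\lambda-B$ forces $f=\tilde f\in\cD(A)$. Hence $\cH_{a,b,c}\subseteq\cD(A)$, and combined with the previous inclusion, $\cD(A)=\cH_{a,b,c}$.
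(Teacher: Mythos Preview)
Your outline for the interior identification ($Af=\tfrac12 f''$ and $\cD(A)\subset\Cii$) is fine and matches the paper's Lemma~\ref{lem2i}. The reverse inclusion in your final paragraph is also sound: reducing to injectivity of $\lambda-\tfrac12(\cdot)''$ on $\cH_{a,b,c}$ and arguing by a maximum principle does work (a positive maximum cannot occur in $\cG^\circ$ since $f''=2\lambda f>0$ there; at a vertex, the Wentzell relation with nonnegative coefficients together with $f(v)>0$, $f'(v_l)\le 0$, $f''(v)=2\lambda f(v)>0$ forces a contradiction after tracking an edge with $b_{v_l}>0$). This is a legitimate alternative to the paper's explicit matrix computation in Lemma~\ref{lem2iii}.

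The genuine gap is in your derivation of the Wentzell condition itself. Your first--passage decomposition $u(\xi)=u^{\mathrm{abs}}(\xi)+E_\xi[e^{-\lambda\tau}]\,u(v)$ is correct, but $u^{\mathrm{abs}}$ is \emph{not} explicit near $v$: the process killed at $v$, started on edge $l$, can reach the far endpoint of $l$ and then wander through the rest of the graph, so $u^{\mathrm{abs}}$ depends on all of $\cG\setminus\{v\}$, not just on Brownian motion on a half--line. More seriously, you never actually produce the ``single linear relation'': you assert that the exit mechanism of $X$ at $v$ imposes one, and then defer the ``delicate step'' to~\cite{CPBMSG}. But that reference constructs processes from given boundary data; it does not extract boundary data from a given process. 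Your sign argument is likewise incomplete: ``probing \eqref{eq1iii} with functions in $\cD(A)$ that prescribe the signs of $f(v)$, $f'(v_l)$, $f''(v)$'' is circular, since which sign patterns are realisable in $\cD(A)$ is exactly what the unknown boundary condition determines.

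The paper avoids all of this by using Dynkin's characteristic operator (Lemma~\ref{lem2ii}): one computes $Af(v)$ via the exit time $H_{v,\varepsilon}$ from a ball of radius $\varepsilon$ around $v$, writes
\[
Af(v)=\lim_{\varepsilon\downarrow 0}\frac{E_v\bigl(f(X(H_{v,\varepsilon}))\bigr)-f(v)}{E_v(H_{v,\varepsilon})},
\]
splits the numerator over the events $\{X(H_{v,\varepsilon})\in l\}$ and $\{X(H_{v,\varepsilon})=\Delta\}$, and normalises by a factor $K(\varepsilon)$ so that the resulting coefficients $a_v(\varepsilon)$, $b_{v_l}(\varepsilon)$, $c_v(\varepsilon)$ are nonnegative and sum to $1$. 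A compactness argument then produces subsequential limits $a_v$, $b_{v_l}$, $c_v$ satisfying \eqref{eq1i}--\eqref{eq1ii}. The signs come for free because the coefficients are built from probabilities and expectations, not extracted after the fact. (The cases where $v$ is a trap or exponentially holding are handled separately and trivially.) If you want to salvage your resolvent route, you would need to replace the hand--wave by an honest computation of $R_\lambda f(v)$ in terms of the exit law from $v$; but at that point you are essentially redoing the Dynkin computation.
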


\begin{remark}
The boundary conditions in $\cH_{a,b,c}$ are \emph{local} in the
sense that only \emph{one vertex} enters each of the conditions~\eqref{eq1iii}. This
is a direct consequence of the path properties of $X$, namely of the condition
that the only jumps $X$ may have are those from $\cG$ (actually from a vertex)
to the cemetery point.
\end{remark}

\begin{remark}\label{standard}
Boundary conditions with $a_v=0=c_v$ are often called \emph{standard boundary conditions}
(see e.g.~\cite{Ku04, KoSc06})
giving rise to what is called a \emph{skew Brownian motion} \cite{ItMc74}, for a recent survey
see e.g.~\cite{Le06}. Killing occurs when
$a_v\neq 0$, and~\cite{CPBMSG} provides a detailed discussion of the process for
single vertex graphs. When $a_v= 0$ the process is conservative and has been studied
extensively in~\cite{FrWe93,FrSh00}.
\end{remark}

Our second main result is converse of theorem~\ref{thm1i}, namely

\begin{theorem} \label{thm1ii}
For any choice of the data as in~\eqref{eq1i}, \eqref{eq1ii}, there is a Brownian
motion $X$ on the metric graph $\cG$ so that its generator $A$ has $\cH_{a,b,c}$ as
its domain.
\end{theorem}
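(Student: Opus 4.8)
The plan is to build the process bottom-up from single-vertex pieces and then to read off its generator using the already-established direct statement, Theorem~\ref{thm1i}. The guiding principle is locality: the Wentzell condition \eqref{eq1iii} at a vertex $v$ involves only the datum $(a_v, b_{v_l}, c_v)$ together with the values $f(v)$, $f'(v_l)$, $f''(v)$, and by the remark following Theorem~\ref{thm1i} this locality is forced by the path structure (the only jumps are from a vertex to $\gD$). Hence the construction may be carried out one vertex at a time and then patched along the interiors of the edges, where no boundary condition is to be imposed.

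First I would decompose $\cG$. Choose an interior point $m_i$ on each internal edge $i\in\cI$, cutting $i$ into two segments. What remains around each vertex $v\in V$ is a star-shaped single-vertex graph $S_v$, carrying the external edges of $\cG$ incident with $v$ as semi-infinite rays and, for each internal edge incident with $v$, a finite segment ending at the cut point $m_i$, which is declared absorbing. On each $S_v$ the construction of~\cite{CPBMSG}, which produces every Brownian motion on a single-vertex graph from the Walsh process \cite{Wa78} by prescribing the killing, stickiness and exit distribution at the centre, yields a process $\tilde{X}_v$ whose generator realizes exactly \eqref{eq1iii} with the given data $(a_v, b_{v_l}, c_v)$ at $v$, with absorption at the cut points. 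The admissibility constraint \eqref{eq1ii} is precisely what makes this single-vertex datum realizable.

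Next I would glue the pieces $\{\tilde{X}_v\}$ along the cut points via the procedure of section~\ref{sect3}: two processes, each absorbed at a common point $m_i$, are concatenated so that the resulting process passes through $m_i$ without delay, killing or splitting, i.e.\ as an ordinary one-dimensional Brownian motion across an interior point. Because near $m_i$ both pieces are standard Brownian motions absorbed at $m_i$, their excursion laws on the two sides are the standard ones, so interlacing the excursions along the local time at $m_i$ — controlled by the crossover-time estimate of the appendix — rebuilds exactly a free passage. Carrying this out for every $i\in\cI$, inductively joining one pair of segments at a time, produces a single process $X$ on $\cG$; one then verifies that $X$ is normal and strong Markov with c\`adl\`ag paths continuous on $[0,\zeta)$, and that, started on any edge and absorbed at that edge's endpoints, it reduces to standard Brownian motion with absorption, which is exactly Definition~\ref{def1i}. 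This gluing step is the main obstacle: turning the path-concatenation into a genuine strong Markov (indeed Feller) process with the correct delay-free passage, and checking consistency across all edges, is the technical heart of the argument.

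Finally, the domain of the generator is identified by the forward theorem rather than by direct computation. Since $X$ is a Brownian motion on $\cG$, Theorem~\ref{thm1i} already gives $Af=\frac12 f''$ and $\cD(A)=\cH_{a',b',c'}$ for \emph{some} admissible data $(a',b',c')$, so it remains only to show $(a',b',c')=(a,b,c)$. Here I use locality once more: the process killed upon leaving a small neighbourhood of a fixed vertex $v$ that avoids every cut point coincides with the corresponding piece of $\tilde{X}_v$, so the boundary condition extracted at $v$ equals the one built into $\tilde{X}_v$, namely \eqref{eq1iii} with coefficients $(a_v,b_{v_l},c_v)$. This forces $(a',b',c')=(a,b,c)$ and hence $\cD(A)=\cH_{a,b,c}$; in contrast to the gluing, the generator identification is essentially free once Theorem~\ref{thm1i} is available.
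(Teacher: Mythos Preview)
Your approach is correct in spirit and close to the paper's, but the decomposition is organised differently, and this difference matters. The paper does \emph{not} cut the internal edges at interior points. Instead, for each vertex $v$ it takes the single-vertex graph $\cG(v)$ with $|\cL(v)|$ \emph{external} (semi-infinite) edges only, builds the Brownian motion $X^v$ on $\cG(v)$ with the prescribed Wentzell datum via~\cite{CPBMSG}, and then applies the joining procedure of section~\ref{sect3} iteratively: for each internal edge $i\in\cI$ with $\p(i)=(v,w)$, one pairs the appropriate external edge of $\cG(v)$ with that of $\cG(w)$ and creates a new internal edge of the correct length $\rho_i$. No artificial interior points $m_i$ ever appear; the internal edges are manufactured directly by section~\ref{sect3}, and theorem~\ref{thm3xiii} says in one stroke that the boundary conditions at the genuine vertices are inherited unchanged. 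By contrast, your route produces artificial cut points $m_i$ at which you must then argue that the glued process is indistinguishable from free one-dimensional Brownian motion---extra work the paper simply sidesteps. (Your idea does work: it is exactly the device the paper uses in section~\ref{sect5} to handle tadpoles, by inserting an auxiliary vertex with the trivial $50/50$ Walsh law. But for the tadpole-free case it is an unnecessary detour.)

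A second, smaller point: the gluing machinery of section~\ref{sect3} is not excursion-theoretic. It is a concatenation at the successive hitting times $S_n$ of the shadow-vertex set (the ``crossover times'' of appendix~\ref{appA}), followed by a direct verification of the Markov and Feller properties. Your phrase ``interlacing the excursions along the local time at $m_i$'' describes a different construction; it can be made to work, but it is not what section~\ref{sect3} provides, so invoking section~\ref{sect3} for it is a mismatch. Your final step---reading off the boundary conditions at each vertex by locality, since the glued process coincides with the local star-graph process on any small vertex neighbourhood---is exactly the content of theorem~\ref{thm3xiii}, proved there via Dynkin's formula as in the proof of lemma~\ref{lem2ii}.
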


In order to rephrase the statements of theorems~\ref{thm1i} and~\ref{thm1ii} in
a concise way, we bring in some additional notation. With a slight abuse of language
we shall also call any quadruple $\X=(\gO,\cA,P,X)$ a \emph{Brownian motion on $\cG$}
whenever $(\gO,\cA,P)$ is a complete probability space, and $X=(X_t,\,t\in\R)$ defined
thereon is a Brownian motion on $\cG$ as in definition~\ref{def1i}.
$\cX(\cG)$ denotes the set of all Brownian motions in this sense, subject to the
equivalence relation which is defined by equality of all finite dimensional
distributions. In $\R^{n+1}$ consider the (compact, convex) $n$-simplex	
\begin{equation*}
	\sigma^n= \Bigl\{x\in\R^{n+1},\, x_i\ge 0, \sum_{i=1}^{n+1}x_i=1\Bigr\}.
\end{equation*}
Let $\sigma^n_0$ be the simplex $\sigma^n$, with the point $(1,0,\cdots,0)$ removed
\begin{equation*}
	\sigma^n_0=\sigma^n\setminus\{(1,0,\cdots,0)\}.
\end{equation*}
It is still convex but not closed. Given a fixed but arbitrary ordering of
$\mathcal{L}(v)$ any triple $\bigl(a_v,(b_{v_l}, \,l\in\mathcal{L}(v)),c_v\bigr)$
satisfying~\eqref{eq1ii} can be viewed as an element in $\sigma^{n(v)+1}_0$ with
$n(v)=|\mathcal{L}(v)|$. With $N(\mathcal{G})=\sum_v( n(v) +2)$ set
\begin{equation*}
	\Sigma(\mathcal{G})= \BigCart_{v\in V}
						\sigma^{n(v)+1}_0\subset \R^{N(\mathcal{G})}.
\end{equation*}
Let $\iota$ be the mapping defined via theorem~\ref{thm1i} by
associating to every Brownian motion $\X$ the data $(a,b,c)\in\Sigma(\cG)$.
Since any two Brownian motions on $\cG$, which have the same finite dimensional
distributions, define the same semigroup, and therefore have the same generator,
it follows that $\iota$ maps these to the same data, that is, $\iota$
can be viewed as mapping from $\cX(\cG)$ to $\Sigma(\cG)$. Theorem~\ref{thm1ii}
states that $\iota$ is surjective. To see its injectivity, suppose that
$[\X_1]$, $[\X_2]$ are different elements in $\cX(\cG)$, where $[\X_i]$,
$i=1$, $2$, denotes the equivalence class of a representative $\X_i$. Assume
that $\iota([\X_1]) = \iota([\X_2])$. By theorem~\ref{thm1i} the generator
$A_i$, $i=1$, $2$, of $\X_i$ is uniquely determined by the data
$\iota([\X_i])$, and therefore we get $A_1=A_2$. It follows, that
$\X_1$ and $\X_2$ define the same semigroup, and therefore all their
finite dimensional distributions coincide, which is a contradiction. Thus we have
proved that $\iota$ is a bijection from $\cX(\cG)$ onto $\Sigma(\cG)$:

\begin{corollary}
The set $\cX(\cG)$ of all Brownian motions on $\mathcal{G}$ is in one-to-one
correspondence with the set $\Sigma(\cG)$.
\end{corollary}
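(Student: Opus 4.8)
The plan is to show that the map $\iota\colon\cX(\cG)\to\Sigma(\cG)$, which assigns to each Brownian motion the Wentzell data $(a,b,c)$ furnished by Theorem~\ref{thm1i}, is a well-defined bijection. Since the elements of $\cX(\cG)$ are equivalence classes under equality of all finite-dimensional distributions, the first task is to verify that $\iota$ descends to this quotient. Two representatives with identical finite-dimensional distributions determine the same transition semigroup on $C_0(\cG)$, hence share a single generator $A$; by Theorem~\ref{thm1i} the data is read off from the domain $\cD(A)=\cH_{a,b,c}$ together with the action $Af=\tfrac12 f''$, so $(a,b,c)$ is an invariant of the class and $\iota$ is well-defined on $\cX(\cG)$.

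For surjectivity I would appeal to Theorem~\ref{thm1ii} directly. Given any admissible $(a,b,c)\in\Sigma(\cG)$ satisfying \eqref{eq1i}--\eqref{eq1ii}, that theorem produces a Brownian motion $X$ on $\cG$ whose generator has domain $\cH_{a,b,c}$; its equivalence class in $\cX(\cG)$ then maps under $\iota$ to the prescribed data, so every point of $\Sigma(\cG)$ is attained.

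For injectivity I would run the same chain in reverse. Suppose $[\X_1]\neq[\X_2]$ yet $\iota([\X_1])=\iota([\X_2])=(a,b,c)$. By the uniqueness clause of Theorem~\ref{thm1i}, each generator $A_i$ is completely pinned down by the common data --- both its domain $\cH_{a,b,c}$ and the prescription $A_i f=\tfrac12 f''$ --- whence $A_1=A_2$. Because every Brownian motion on $\cG$ is a Feller process, as recorded in Remarks~\ref{def1ii}, the generator determines the Feller semigroup through the Hille--Yosida correspondence, the semigroup determines the transition function, and the transition function (via the Markov property, for every starting point) determines all finite-dimensional distributions. Hence $\X_1$ and $\X_2$ share their finite-dimensional distributions, i.e.\ $[\X_1]=[\X_2]$, contradicting the hypothesis.

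The main obstacle is not the bookkeeping of these three steps but their reliance on the two principal theorems: surjectivity rests entirely on the pathwise construction of Theorem~\ref{thm1ii}, while injectivity rests on the uniqueness half of Theorem~\ref{thm1i} combined with the Feller property. Once those are granted, the only genuine point to check is that passage to the quotient by finite-dimensional distributions is precisely the equivalence making $\iota$ simultaneously well-defined and injective --- and this is exactly what the generator--semigroup correspondence for Feller processes supplies.
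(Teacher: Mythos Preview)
Your proposal is correct and follows essentially the same route as the paper: well-definedness of $\iota$ on equivalence classes via the generator, surjectivity from Theorem~\ref{thm1ii}, and injectivity by the contradiction argument that equal data force equal generators, hence equal semigroups and finite-dimensional distributions. The only difference is cosmetic --- you spell out the Hille--Yosida step where the paper simply says ``same semigroup.''
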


\section{Proof of Theorem~\ref{thm1i}}	\label{sect2}
The following notation will be useful throughout this article: If $\xi$ is
a point in $\cG^\circ = \cG\setminus V$, then it is in one-to-one
correspondence with its \emph{local coordinates} $(l,x)$, where $l\in\cL$
is the edge to which $\xi$ belongs, while $x$ is the point corresponding
to $\xi$ in the interval to which $l$ is isomorphic. Then we simply write
$\xi=(l,x)$. If $f$ is a function on the graph $\cG$ we shall also denote
$f(\xi)$ by $f(l,x)$ or $f_l(x)$.

We denote by $U=(U_t,\,t\in\R_+)$ the semigroup generated by a Brownian motion $X$ on
$\cG$ acting on the Banach space $B(\cG)$ of bounded measurable functions on $\cG$,
equipped with the sup-norm, that is, for $f\in\B(\cG)$,
\begin{equation*}
	U_t f(\xi) = E_\xi\bigl(f(X_t)\bigr),\qquad t\in\R_+,\,\xi\in\cG.
\end{equation*}
Clearly, $U$ is a positivity preserving contraction semigroup.
In the sequel we shall notationally not distinguish between the semigroup $U$
acting on $B(\cG)$ and its restriction to the subspace $\Co$ of $B(\cG)$.

The proof of the following lemma can be taken over with minor modifications from
the standard literature, e.g., from~\cite[Chapter~6.1]{Kn81}. Therefore it is
omitted here.

\begin{lemma}   \label{lem2i}
For every Brownian motion $X$ on the metric graph $\cG$, the generator $A$ of its
semigroup $U$ acting on $\Co$ has a domain $\cD(A)$ contained in $\Cii$. Moreover,
for every $f\in\cD(A)$, $A f=1/2\,f''$.
\end{lemma}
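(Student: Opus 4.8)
The plan is to localize the problem to the interior of a single edge and there exploit Definition~\ref{def1i}, which says that before $X$ reaches a vertex it runs like a standard one–dimensional Brownian motion, whose generator is $\tfrac12\,d^2/dx^2$. Fix $f\in\cD(A)$ and set $g=Af\in\Co$; note $g$ is bounded. Working on a single edge $l$ identified with an interval of $\R$, I would choose a bounded open subinterval $I=(\alpha,\beta)$ whose closure lies in the open edge $l^\circ$, and let $\tau$ be the first exit time of $X$ from $I$. Since $\overline I\subset l^\circ$, the process cannot reach a vertex before time $\tau$, so by Definition~\ref{def1i} together with the strong Markov property the law of $(X_s,\,s\le\tau)$ started at $x\in I$ coincides with that of a standard Brownian motion absorbed at $\alpha$ and $\beta$; in particular $E_x(\tau)<\infty$.

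The decisive step is Dynkin's formula for the $\Co$–generator: since $f(X_t)-f(X_0)-\int_0^t g(X_s)\,ds$ is a martingale and $E_x(\tau)<\infty$ with $g$ bounded, optional stopping at $\tau$ yields
\begin{equation*}
    E_x\bigl(f(X_\tau)\bigr)-f(x)=E_x\Bigl(\int_0^\tau g(X_s)\,ds\Bigr),\qquad x\in I.
\end{equation*}
Because the excursion up to $\tau$ is that of standard Brownian motion on $I$, both sides are explicit. The harmonic measure is the linear interpolation, so $E_x(f(X_\tau))=h(x)$, where $h$ is the affine function on $\overline I$ with $h(\alpha)=f(\alpha)$ and $h(\beta)=f(\beta)$. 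The right–hand side equals $\int_I G_I(x,y)\,g(y)\,dy$ with $G_I$ the Green's function of $\tfrac12\,d^2/dx^2$ on $I$ with Dirichlet conditions, namely $G_I(x,y)=\dfrac{2\,(x\wedge y-\alpha)(\beta-x\vee y)}{\beta-\alpha}$. Writing $w(x)=\int_I G_I(x,y)\,g(y)\,dy$, we obtain $f=h-w$ on $I$.

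Since $g$ is continuous, $w$ is the unique $C^2$ solution of $-\tfrac12 w''=g$ with $w(\alpha)=w(\beta)=0$, so $w''=-2g$; as $h$ is affine, $f=h-w$ is twice continuously differentiable on $I$ with $f''=-w''=2g$, that is $\tfrac12 f''=g=Af$. Because $I$ was an arbitrary subinterval with closure in $l^\circ$ and $l$ an arbitrary edge, this shows that $f$ is of class $C^2$ on $\cG^\circ$ and that $Af=\tfrac12 f''$ there. To conclude $f\in\Cii$, I observe that the identity $f''=2g$ holds throughout $\cG^\circ$ with $g\in\Co$; hence $f''$ extends from $\cG^\circ$ to the function $2g\in\Co$. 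Together with $f\in\Co$ (valid since $\cD(A)\subset\Co$), this is exactly the defining property of $\Cii$, giving $\cD(A)\subset\Cii$ and $Af=\tfrac12 f''$ for every $f\in\cD(A)$.

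I expect the main obstacle to be the careful justification that, up to the exit time $\tau$ from an interval $I$ strictly inside an edge, the process is genuinely a standard Brownian motion, so that Dynkin's formula, the linear harmonic measure, and the explicit Green's function all apply. This is precisely where Definition~\ref{def1i} and the strong Markov property are essential; everything after that reduction is a routine one–dimensional computation, and the integrability needed for Dynkin's formula is automatic from $E_x(\tau)<\infty$ and the boundedness of $g$.
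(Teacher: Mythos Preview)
Your argument is correct and is precisely the kind of standard localization argument the paper has in mind when it omits the proof and refers to~\cite[Chapter~6.1]{Kn81}: reduce to a compact subinterval of an edge, use Definition~\ref{def1i} to identify the stopped process with absorbed one-dimensional Brownian motion, apply Dynkin's formula with the explicit Dirichlet Green's function, and read off $f''=2Af$ on $\cG^\circ$, whence $f''$ extends to $2Af\in\Co$. The only point worth tightening is the justification of the martingale identity $f(X_t)-f(X_0)-\int_0^t Af(X_s)\,ds$: this is the standard Dynkin martingale for Feller processes, and the paper has already recorded in Remarks~\ref{def1ii} that every Brownian motion on $\cG$ is Feller, so you are entitled to it; your remark that $E_x(\tau)<\infty$ and $g$ bounded suffice for optional stopping is exactly what is needed.
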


The preceding lemma implies the second statement of theorem~\ref{thm1i}.
The proof of the first statement of theorem~\ref{thm1i} has two rather
distinct parts, and therefore we split it by proving the following two lemmas:

\begin{lemma}   \label{lem2ii}
Suppose that $X$ is a Brownian motion on a metric graph $\cG$, and that $\cD(A)$
is the domain of the generator $A$ of its semigroup. Then there are $a$, $b$, $c$
as in~\eqref{eq1i}, \eqref{eq1ii}, so that $\cD(A)\subset\cH_{a,b,c}$.
\end{lemma}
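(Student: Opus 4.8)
The plan is to work one vertex at a time: since the desired condition~\eqref{eq1iii} couples only the boundary data at a single $v\in V$, it suffices to produce, for each fixed $v$, one nonzero linear relation of the required form satisfied by all $f\in\cD(A)$, together with coefficients having the correct signs and normalization. By Lemma~\ref{lem2i} every $f\in\cD(A)$ lies in $\Cii$ and satisfies $Af=\tfrac12 f''$, so the inward first derivatives $f'(v_l)$, $l\in\cL(v)$, and the common second derivative $f''(v)$ are well defined. I would introduce the boundary-data map $T_v\colon\cD(A)\to\R^{n(v)+2}$, $T_v f=\bigl(f(v),(f'(v_l))_{l\in\cL(v)},f''(v)\bigr)$, with $n(v)=|\cL(v)|$. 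The goal is to exhibit a nonzero covector of the shape $(a_v,(-b_{v_l})_l,\tfrac12 c_v)$, with $a_v\ge 0$, $b_{v_l}\ge0$, $c_v\ge 0$, that annihilates the range of $T_v$; rescaling then gives~\eqref{eq1ii}, and~\eqref{eq1iii} is exactly the statement that this covector kills $T_v f$.

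First I would establish that the range of $T_v$ lies in a hyperplane, i.e.\ that at least one nontrivial relation exists. For this I would use the resolvent $R_\lambda=(\lambda-A)^{-1}$, $\lambda>0$, whose range is $\cD(A)$. Writing $f=R_\lambda g$, the function $f$ solves $\lambda f-\tfrac12 f''=g$ edgewise, and splitting the path at the first hitting time of the vertex set $V$ (the strong Markov property, with the first passage time formula recalled in Remarks~\ref{def1ii}) yields a decomposition $R_\lambda g=R_\lambda^0 g+H_\lambda g$. Here $R_\lambda^0$ is the resolvent of the process absorbed at all vertices, which by Definition~\ref{def1i} is the explicit Dirichlet resolvent of standard one–dimensional Brownian motion on each edge, so $R_\lambda^0g$ vanishes at every vertex; and $H_\lambda g$ is, on each edge incident to $v$, the $\lambda$–harmonic function (an exponential on external edges, a $\cosh/\sinh$ combination on internal ones) interpolating the vertex values $f(w)$. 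Consequently $f''(v)=2\lambda f(v)-2g(v)$ by continuity of $\lambda f-\tfrac12 f''$, while each inward derivative $f'(v_l)$ is an explicit affine function of $g$ and of the vertex values. Eliminating $g(v)$ and the vertex values produces a single nontrivial relation among $f(v)$, the $f'(v_l)$, and $f''(v)$, valid for all $f\in\cD(A)$, so the range of $T_v$ has codimension at least one.

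Next I would fix the signs and the normalization. The input is that $U$ is a positivity–preserving contraction semigroup, equivalently that $A$ obeys the positive maximum principle: if $f\in\cD(A)$ attains a nonnegative maximum at $v$, then $f'(v_l)\le 0$ for all $l$ and $Af(v)=\tfrac12 f''(v)\le 0$. Testing the relation against functions $R_\lambda g$ with $g\ge 0$ chosen to localize a maximum at or near $v$, and using $R_\lambda g\ge 0$ together with the contraction bound $\lambda\|R_\lambda g\|\le\|g\|$, forces the coefficient of $f''(v)$ to be nonnegative (stickiness $c_v$), the coefficients of the $f'(v_l)$ to enter with the inward sign so that $b_{v_l}\ge 0$ (the flux into edge $l$), and the coefficient of $f(v)$ to be nonnegative (killing $a_v$). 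Since the covector is nonzero and all three types of coefficients are nonnegative, rescaling achieves $a_v+\sum_{l\in\cL(v)}b_{v_l}+c_v=1$, which is~\eqref{eq1ii}, and the resulting relation is precisely~\eqref{eq1iii}. Finally, $a_v<1$ follows from density of $\cD(A)$ in $C_0(\cG)$: were $a_v=1$, then~\eqref{eq1ii} would force all $b_{v_l}=0$ and $c_v=0$, so~\eqref{eq1iii} would read $f(v)=0$ for every $f\in\cD(A)$, contradicting the fact that $\{f:f(v)=0\}$ is a proper closed, hence non-dense, subspace of $C_0(\cG)$.

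I expect the main obstacle to be the second step together with the sign analysis: carrying out the edgewise first–passage/resolvent computation so as to read off a relation of exactly the form~\eqref{eq1iii}, while correctly tracking the two–sided coupling through each internal edge (whose harmonic interpolation involves the values at both of its endpoints) and verifying that the coefficients—killing, flux into each incident edge, and stickiness—are simultaneously nonnegative and admit the single normalization~\eqref{eq1ii}. The reduction of everything to the single vertex $v$ is justified by the locality of the conditions, itself a consequence of the path continuity of $X$ away from the cemetery.
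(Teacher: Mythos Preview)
Your approach is genuinely different from the paper's, and as written it has a real gap in the first step.

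The paper's proof is probabilistic. For each vertex $v$ it analyzes the exit time $S_v=\inf\{t>0:X_t\ne v\}$, which under $P_v$ is exponential with some rate $\beta_v\in[0,\infty]$. The cases $\beta_v=0$ (trap) and $0<\beta_v<\infty$ (exponential holding followed by a jump to $\gD$) are dispatched directly. In the main case $\beta_v=+\infty$, Dynkin's characteristic operator is used: with $H_{v,\gep}$ the exit time from the $\gep$--ball around $v$, one rewrites
\[
Af(v)=\lim_{\gep\downarrow0}\frac{E_v\bigl(f(X_{H_{v,\gep}})\bigr)-f(v)}{E_v(H_{v,\gep})},
\]
normalizes by a suitable factor, and extracts the coefficients $a_v$, $b_{v_l}$, $c_v$ as subsequential limits of ratios built from $P_v(X_{H_{v,\gep}}\in l)$, $P_v(X_{H_{v,\gep}}=\gD)$ and $E_v(H_{v,\gep})$. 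Because these are probabilities and expectations, the nonnegativity and the normalization~\eqref{eq1ii} come for free; no separate sign analysis is needed.

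The gap in your argument is the ``elimination'' in step~1. For $f=R_\gl g$ on an edge $l\in\cL(v)$, the inward derivative $f'(v_l)$ is not an affine function of $g(v)$ and of vertex values alone: it also contains the term $(R_\gl^0 g)_l'(0)$, the inward derivative of the Dirichlet resolvent on $l$, which is a functional of $g\restriction_l$ and is neither part of $T_vf$ nor one of the quantities you propose to eliminate. There are $n(v)$ such edge functionals, and as $g$ varies they are essentially free. Thus after removing $g(v)$ and the neighboring vertex values you are left with a relation that still involves these $n(v)$ extra quantities, not a relation purely among $f(v)$, the $f'(v_l)$, and $f''(v)$. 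The first passage formula describes $f$ edgewise once the vertex values are given; it does not by itself produce the missing constraint at $v$. That constraint is precisely what pins down $R_\gl g(v)$ in terms of the edge data, and it is what you are trying to prove, so the argument as stated is circular.

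Your step~2 is also only a sketch: even granting a nontrivial annihilating covector, ``localizing a maximum near $v$'' requires producing elements of $\cD(A)$ with prescribed boundary behavior, which again presupposes knowledge of the domain. The Dynkin approach sidesteps both issues by reading the coefficients off directly as (limits of) probabilities.
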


\begin{lemma}   \label{lem2iii}
Suppose that $A$ is the generator of a Brownian motion $X$ on $\cG$ with domain
$\cD(A)\subset \cH_{a,b,c}$ for some $a$, $b$, $c$ as in~\eqref{eq1i},
\eqref{eq1ii}. Then $\cD(A)=\cH_{a,b,c}$
\end{lemma}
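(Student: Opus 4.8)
The plan is to establish the one inclusion that is still missing, namely $\cH_{a,b,c}\subset\cD(A)$, since the reverse inclusion is exactly the hypothesis (furnished by Lemma~\ref{lem2ii}). The tool will be the resolvent. Because $X$ is a Feller process (Remarks~\ref{def1ii}), its generator $A$ generates a Feller semigroup, so for each $\lambda>0$ the resolvent $R_\lambda=(\lambda-A)^{-1}$ is a bounded operator on $\Co$ with range $\cD(A)$ and $(\lambda-A)R_\lambda h=h$ for all $h\in\Co$. Fix $\lambda>0$ and let $g\in\cH_{a,b,c}$ be arbitrary. Since $g\in\Cii$ its second derivative lies in $\Co$, hence $h:=\lambda g-\tfrac12 g''\in\Co$; put $f:=R_\lambda h\in\cD(A)$. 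Using $Af=\tfrac12 f''$ from Lemma~\ref{lem2i} I get $\lambda f-\tfrac12 f''=h=\lambda g-\tfrac12 g''$, so $w:=f-g$ satisfies
\begin{equation*}
    \tfrac12\,w''=\lambda w\quad\text{on }\cG^\circ.
\end{equation*}
Furthermore $w\in\Cii$, and since both $f\in\cD(A)\subset\cH_{a,b,c}$ and $g\in\cH_{a,b,c}$ satisfy~\eqref{eq1iii}, so does $w$. Thus the whole lemma reduces to showing that the only $w\in\cH_{a,b,c}$ solving $\tfrac12 w''=\lambda w$ is $w\equiv0$: indeed, then $g=f\in\cD(A)$.

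I would prove this uniqueness by a maximum principle, and this is where the sign constraints in~\eqref{eq1i} and the normalization~\eqref{eq1ii} become decisive --- this is the main obstacle. As $w\in\Co$ vanishes at infinity, if $w\not\equiv0$ I may, replacing $w$ by $-w$ if necessary (legitimate since both the equation and~\eqref{eq1iii} are linear), assume that $M:=\max_\cG w>0$ is attained. It cannot be attained at an interior point $\xi$ of an edge, for there $w''(\xi)\le0$ while the equation gives $w''(\xi)=2\lambda w(\xi)=2\lambda M>0$. Hence $M$ is attained at some vertex $v$, where $w(v)=M$, every inward derivative obeys $w'(v_l)\le0$ ($l\in\cL(v)$), and, by Remark~\ref{remC02} and the equation, $w''(v)=2\lambda M>0$.

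Evaluating~\eqref{eq1iii} at $v$, the three terms $a_vw(v)=a_vM$, $\;-\sum_{l}b_{v_l}w'(v_l)=\sum_l b_{v_l}\bigl(-w'(v_l)\bigr)$ and $\tfrac12 c_vw''(v)=c_v\lambda M$ are each nonnegative; since their sum is $0$ and $M>0$, this forces $a_v=0$, $c_v=0$ and $b_{v_l}w'(v_l)=0$ for every $l$. The normalization~\eqref{eq1ii} then gives $\sum_l b_{v_l}=1$, so some edge $l^*\in\cL(v)$ has $b_{v_{l^*}}>0$, whence $w'(v_{l^*})=0$. On $l^*$ the function $w$ therefore solves $w''=2\lambda w$ with value $M$ and vanishing inward derivative at $v$, i.e.\ $w(s)=M\cosh(\sgl\,s)>M$ for arclength $s>0$ measured inward from $v$, contradicting the maximality of $M$. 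Hence $\max_\cG w\le0$, and applying the same argument to $-w$ yields $\min_\cG w\ge0$, so $w\equiv0$. This gives $\cH_{a,b,c}\subset\cD(A)$, which together with Lemma~\ref{lem2ii} proves $\cD(A)=\cH_{a,b,c}$.
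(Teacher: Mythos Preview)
Your proof is correct, and it takes a genuinely different route from the paper. Both arguments make the same reduction: the inclusion $\cH_{a,b,c}\subset\cD(A)$ follows once one knows that the only $w\in\cH_{a,b,c}$ solving $\tfrac12 w''=\lambda w$ is $w\equiv0$. From there, however, the paper proceeds algebraically: it writes the general solution on each edge as a combination of $e^{\pm\sqrt{2\lambda}\,x}$, assembles the boundary conditions into a linear system $Z(\kappa)r(\kappa)=0$ with $\kappa=\sqrt{2\lambda}$, and shows that $\det Z(\kappa)$ is a nonvanishing entire function (using Lemma~\ref{lem2iv} and a perturbation estimate for large negative~$\kappa$), so that $r(\kappa)=0$ for all but discretely many $\kappa$. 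Your argument replaces this matrix analysis by a maximum principle: the equation forbids an interior maximum, the sign structure $a_v,b_{v_l},c_v\ge 0$ together with the normalization~\eqref{eq1ii} forbids a strict vertex maximum (by forcing $a_v=c_v=0$, hence $\sum_l b_{v_l}=1$, hence some inward derivative vanishes, which launches a $\cosh$ overshoot). Your approach is more elementary and exploits the probabilistically natural positivity of the Wentzell data, while the paper's approach is more computational, uses only the normalization, and develops the matrix formalism $(A,B,C,\hat Z_\pm)$ that is of independent interest. One small remark: in your final sentence you invoke Lemma~\ref{lem2ii}, but the inclusion $\cD(A)\subset\cH_{a,b,c}$ is already the hypothesis of Lemma~\ref{lem2iii}, so no further citation is needed.
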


\begin{proof}[Proof of lemma~\ref{lem2ii}]
Our proof follows the one in~\cite[Chapter~6.1]{Kn81} quite closely --- actually, it
is sufficient to consider a special case of the proof given there.

We show that for every vertex $v\in V$ there are constants $a_v\in [0,1)$,
$b_{v_l}\in [0,1]$, $l\in\cL(v)$, $c_v\in [0,1]$ satisfying~\eqref{eq1ii},
and such that all $f$ in the domain $\cD(A)$ of the generator satisfy the boundary
condition~\eqref{eq1iii}. To this end, we let $f\in\cD(A)$, fix a vertex $v\in V$,
and compute $A f(v)$.

Consider the exit time from $v$, i.e., the stopping time $S_v =
H(\cGo)$, where for any subset $M\subset \cG$, $H(M)\equiv H_M$ denotes the hitting time
of $M$. It is well known (e.g., \cite{Kn81, ReYo91, DyJu69}) that because of the
strong Markov property of $X$, $S_v$ is under $P_v$ exponentially distributed with a
rate $\gb_v\in[0,+\infty]$. Consequently we discuss three cases:

\vspace{.5\baselineskip}\noindent
\emph{Case $\gb_v=0$}: $X$ is absorbed at $v$, i.e., $v$ is a \emph{trap}. Thus
$U_t f(v) = f(v)$ for all $t\ge 0$. Consequently, $A f(v)=0$, and therefore
$1/2\, f''(v)=0$. Thus $f$ satisfies the boundary condition~\eqref{eq1iii} at $v$
with $a_v=0$, $c_v=1$, and $b_{v_l}=0$ for all $l\in\cL(v)$.

\vspace{.5\baselineskip}\noindent
\emph{Case $0<\gb_v<+\infty$}: In this case the process stays at $v$ $P_v$--a.s.\ for
a strictly positive, finite moment of time, i.e., $v$ is \emph{exponentially
holding}. It is well known (cf., e.g., \cite[p.~154]{Kn81}, \cite[p.~104, Prop.~3.13]{ReYo91})
that then the process has to leave $v$ by a jump, and by our assumption of path continuity on
$[0,\eta)$, the process has to jump to the cemetery $\gD$.
Therefore we get for $t>0$, $U_t f(v) = \exp(-\gb t) f(v)$, and thus $A f(v) + \gb
f(v) = 0$, and the boundary condition~\eqref{eq1iii} holds for the choice
\begin{equation}    \label{eq2i}
   a_v = \frac{\gb}{1+\gb},\quad c_v=\frac{1}{1+\gb},\quad
            b_{v_l} = 0,\,l\in\cL(v).
\end{equation}

\vspace{.5\baselineskip}\noindent
\emph{Case $\gb_v=+\infty$}: In this case the $X$ leaves the vertex $v$ immediately,
and it begins a Brownian excursion into one of the edges incident with the vertex
$v$. In particular, $v$ is not a trap. Therefore we may compute $A f(v)$
in Dynkin's form, e.g., \cite[p.~140, ff.]{Dy65a}, \cite[p.~99]{ItMc74}. For $\gep>0$
let $H_{v,\gep}$ denote the hitting time of the complement of the ball
$B_\gep(v)$ of radius $\gep$ around $v$. Then
\begin{equation}    \label{eq2ii}
   A f(v) = \limep \frac{E_v\Bigl(f\bigl(X(H_{v,\gep})\bigr)\Bigr)
                                                -f(v)}{E_v(H_{v,\gep})}.
\end{equation}
Now
\begin{align*}
    E_v\Bigl(f\bigl(X(H_{v,\gep})\bigr)\Bigr)
        &= \sum_{l\in\cL(v)} f_l(\gep)\,P_v\bigl(X(H_{v,\gep})\in l\bigr)
            + f(\gD)\,P_v\bigl(X(H_{v,\gep})=\gD\bigr)\nonumber\\
        &= \sum_{l\in\cL(v)} f_l(\gep)\,P_v\bigl(X(H_{v,\gep})\in l\bigr),
\end{align*}
where the last equality follows from $f(\gD)=0$. Let us denote
\begin{align*}
    r_l(\gep)   &= \frac{P_v\bigl(X(H_{v,\gep})\in l\bigr)}{E_v(H_{v,\gep})},
                        \ l\in\cL(v),\quad
    r_\gD(\gep) = \frac{P_v\bigl(X(H_{v,\gep})=\gD\bigr)}{E_v(H_{v,\gep})},\\[1ex]
    K(\gep)     &= 1 + r_\gD(\gep) + \gep \sum_{l\in\cL(v)} r_l(\gep).
\end{align*}
The continuity of the paths of $X$ up to the lifetime $\zeta$ yields
\begin{equation*}
    \sum_{l\in\cL(v)} P_v\bigl(X(H_{v,\gep})\in l\bigr)
                            + P_v\bigl(X(H_{v,\gep})=\gD\bigr)=1,
\end{equation*}
and therefore equation~\eqref{eq2ii} can be rewritten as
\begin{equation*}
    \limep\Bigl(A f(v) + r_\gD(\gep) f(v)
        - \sum_{l\in\cL(v)} r_l(\gep) \bigl(f_l(\gep)-f(v)\bigr)\Bigr)=0.
\end{equation*}
Since for all $\gep>0$, $K(\gep)^{-1}\le 1$, it follows that
\begin{equation*}
    \limep\Bigl(\frac{1}{K(\gep)}\,A f(v) + \frac{r_\gD(\gep)}{K(\gep)}\,f(v)
        - \sum_{l\in\cL(v)} \frac{\gep\, r_l(\gep)}{K(\gep)}\,
                \frac{f_l(\gep)-f(v)}{\gep}\Bigr)=0,
\end{equation*}
which by lemma~\ref{lem2i} we may rewrite as
\begin{equation*}
    \limep\Bigl(a_v(\gep) f(v) + \frac{1}{2}\,c_v(\gep) f''(v)
        - \sum_{l\in\cL(v)} b_{v_l}(\gep)\,\frac{f_l(\gep)-f(v)}{\gep}\Bigr)=0,
\end{equation*}
where we have introduced the non-negative quantities
\begin{equation*}
    a_v(\gep)       = \frac{r_\gD(\gep)}{K(\gep)},\quad
    c_v(\gep)       = \frac{1}{K(\gep)},\quad
    b_{v_l}(\gep)   = \frac{\gep\, r_l(\gep)}{K(\gep)},\ l\in\cL(v).
\end{equation*}
Observe that for every $\gep>0$,
\begin{equation*}
    a_v(\gep) + c_v(\gep) + \sum_{l\in\cL(v)} b_{v_l}(\gep) =1.
\end{equation*}
Therefore every sequence $(\gep_n,\,n\in\N)$ with $\gep_n>0$ and $\gep_n\da 0$ has a
subsequence so that $a_v(\gep)$, $c_v(\gep)$ and $b_{v_l}(\gep)$, $l\in\cL(v)$,
converge along this subsequence to numbers $a_v$, $c_v$, and $b_{v_l}$ respectively in
$[0,1]$, and the relation~\eqref{eq1ii} holds true. It is not hard to check that
for every $f\in\Cii$
\begin{equation*}
    \frac{f_l(\gep)-f(v)}{\gep}
\end{equation*}
converges with $\gep\da 0$ to $f'(v_l)$, and therefore we obtain that for every
vertex $v\in V$, $f\in\cD(A)$ satisfies the boundary condition~\eqref{eq1iii} with data
$a$, $b$, $c$ as in~\eqref{eq1i}, \eqref{eq1ii}.
\end{proof}

Before we can prove lemma~\ref{lem2iii} we have to introduce some additional
formalism.

We define the subspace $\Coo$ of functions $f$ in $\Co$ which are twice continuously
differentiable on $\cGo$, such that $f''$ (as defined on $\cG^\circ$) vanishes at infinity,
and furthermore for every $v\in V$ and all $l\in\cL$ the limit
\begin{equation*}
    \lim_{\xi\to v,\, \xi\in l\op} f''(\xi)
\end{equation*}
exists. Similarly as in the statement of lemma~\ref{lemC02} the last limit
is equal to the second order derivative $f''(v_l)$ of $f$ at $v$ in direction
of $l$.

For given data $a$, $b$, $c$ as in~\eqref{eq1i}, \eqref{eq1ii}, it will be convenient
to consider $\cH_{a,b,c}$ equivalently as being the subspace of
$\Coo$ so that for its elements $f$ at every $v\in V$ the boundary
conditions~\eqref{eq1iii} as well as the boundary condition
\begin{equation}    \label{eq2iii}
    f''(v_l) = f''(v_k),\qquad     \text{for all $l,\,k\in\cL(v)$}
\end{equation}
hold true. Relation~\eqref{eq2iii} is just another way to express that $f''$ extends
continuously from $\cG^\circ$ to $\cG$.

We consider the sets $V$, $\cE$, and $\cI$ as being ordered in some arbitrary
way. With the convention that in $\cL$ the elements of $\cE$ come first this
induces also an order relation on $\cL$.

Suppose that $f\in\Coo$. With the given ordering of $\cE$ and $\cI$ we define the
following column vectors of length $|\cE|+2|\cI|$:
\begin{align*}
    f(V)    &= \Bigl(\bigl(f_e(0),\,e\in\cE\bigr),\bigl(f_i(0),\,i\in\cI\bigr),
                                        \bigl(f_i(\rho_i)\,i\in\cI\bigr)\Bigr)^t,\\
    f'(V)   &= \Bigl(\bigl(f'_e(0),\,e\in\cE\bigr),\bigl(f'_i(0),\,i\in\cI\bigr),
                                        \bigl(-f'_i(\rho_i)\,i\in\cI\bigr)\Bigr)^t,\\
    f''(V)  &= \Bigl(\bigl(f''_e(0),\,e\in\cE\bigr),\bigl(f''_i(0),\,i\in\cI\bigr),
                                        \bigl(f''_i(\rho_i)\,i\in\cI\bigr)\Bigr)^t,
\end{align*}
where the superscript ``$t$'' indicates transposition.

We want to write the boundary conditions~\eqref{eq1iii}, \eqref{eq2iii} in a compact
way. To this end we introduce the following order relation on $V_\cL$: For $v_l$,
$v'_{l'}\in V_\cL$ we set $v_l \preceq v'_{l'}$ if and only if $v \prec v'$ or $v =
v'$ and $l\preceq l'$ (where for $V$ and $\cL$ we use the order relations introduced
above). For $f$ as above set
\begin{align*}
    \tilde f(V)   &= \bigl(f(v_l),\,v_l\in V_\cL\bigr)^t,\\
    \tilde f'(V)  &= \bigl(f'(v_l),\,v_l\in V_\cL\bigr)^t,\\
    \tilde f''(V) &= \bigl(f''(v_l),\,v_l\in V_\cL\bigr)^t.
\end{align*}
Then there exists a permutation matrix $P$ so that
\begin{equation*}
    \tilde f(V) = P f(V),\qquad \tilde f'(V) = P f'V),\qquad \tilde f''(V) = P f''(V).
\end{equation*}
In particular, $P$ is an orthogonal matrix which has in every row and in every
column exactly one entry equal to one while all other entries are zero.

For every $v\in V$ we define the following $|\cL(v)|\times|\cL(v)|$ matrices:
\begin{align*}
    \tilde A(v) &= \begin{pmatrix}
                    a_v    & 0      & 0      & \cdots & 0\\
                    0      & 0      & 0      & \cdots & 0\\
                    \vdots & \vdots & \vdots & \ddots & \vdots\\
                    0      & 0      & 0      & \cdots & 0
                 \end{pmatrix},\\[2ex]
    \tilde B(v) &= \begin{pmatrix}
                    -b_{v_{l_1}} & -b_{v_{l_2}} & -b_{v_{l_3}} & \cdots & -b_{v_{l_{|\cL(v)|}}}\\
                    0            & 0            & 0            & \cdots & 0\\
                    \vdots       & \vdots       & \vdots       & \ddots & \vdots\\
                    0            & 0            & 0            & \cdots & 0
                 \end{pmatrix},\\[2ex]
    \tilde C(v) &= \begin{pmatrix}
                    1/2\,c_v & 0      & 0      & 0      & \cdots & 0      \\
                    1        & -1     & 0      & 0      & \cdots & 0      \\
                    0        & 1      & -1     & 0      & \cdots & 0      \\
                    0        & 0      & 1      & -1     & \cdots & 0      \\
                    \vdots   & \vdots & \ddots & \ddots & \ddots & \vdots \\
                    0        & 0      & 0      & 0      & \cdots & -1
                 \end{pmatrix},
\end{align*}
where we have labeled the elements in $\cL(v)$ in such a way that in the above
defined ordering we have $l_1\prec l_2 \prec \dotsb \prec l_{|\cL(v)|}$. Observe
that $\tilde C(v)$ is invertible if and only if $c_v\ne 0$. Define block matrices
$\tilde A$, $\tilde B$, and $\tilde C$ by
\begin{equation*}
    \tilde A = \bigoplus_{v\in V} A(v),\quad \tilde B = \bigoplus_{v\in V} B(v),
            \quad \tilde C = \bigoplus_{v\in V} C(v).
\end{equation*}
Then we can write the boundary conditions~\eqref{eq1iii}, \eqref{eq2iii} simultaneously for
all vertices as
\begin{equation}    \label{bch}
    \tilde A \tilde f(V) + \tilde B \tilde f'(V) + \tilde C \tilde f''(V) = 0.
\end{equation}
Consequently the boundary conditions can equivalently be written in the form
\begin{equation}    \label{bc}
    Af(V) + Bf'(V) + Cf''(V) = 0,
\end{equation}
with
\begin{equation}
    A = P^{-1}\tilde A P,\quad B = P^{-1}\tilde B P,\quad C = P^{-1}\tilde C P.
\end{equation}

We bring in the following two matrix-valued functions on the complex plane
\begin{equation}    \label{Zpm}
    \hat Z_\pm (\gk) = A \pm \gk B + \gk^2 C,\qquad \gk\in\C.
\end{equation}

\begin{lemma}   \label{lem2iv}
There exists $R>0$ so that for all $\gk\in\C$ with $|\gk|\ge R$ the matrices $\hat
Z_\pm(\gk)$ are invertible, and there are constants $C$, $p>0$ so that
\begin{equation}    \label{invnorm}
    \|\hat Z_\pm(\gk)^{-1}\|\le C\,|\gk|^p ,\qquad |\gk|\ge R.
\end{equation}
\end{lemma}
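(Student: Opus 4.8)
The plan is to reduce the statement to a single vertex and an explicit determinant computation. Since $P$ is a permutation (hence orthogonal) matrix and $A=P^{-1}\tilde A P$, $B=P^{-1}\tilde B P$, $C=P^{-1}\tilde C P$, we have $\hat Z_\pm(\gk)=P^{-1}\tilde Z_\pm(\gk)P$ with $\tilde Z_\pm(\gk)=\tilde A\pm\gk\tilde B+\gk^2\tilde C=\bigoplus_{v\in V}\tilde Z_\pm^{(v)}(\gk)$, where $\tilde Z_\pm^{(v)}(\gk)=\tilde A(v)\pm\gk\tilde B(v)+\gk^2\tilde C(v)$. Hence $\hat Z_\pm(\gk)$ is invertible if and only if every block $\tilde Z_\pm^{(v)}(\gk)$ is, and, working in the spectral norm (which is unchanged under conjugation by $P$), $\|\hat Z_\pm(\gk)^{-1}\|=\max_{v\in V}\|\tilde Z_\pm^{(v)}(\gk)^{-1}\|$. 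So it suffices to treat each $n(v)\times n(v)$ block, and afterwards take $R$ and $C$, $p$ to serve all vertices simultaneously.

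First I would compute $\det\tilde Z_\pm^{(v)}(\gk)$. Writing out the block using the given forms of $\tilde A(v)$, $\tilde B(v)$, $\tilde C(v)$, its first row is $\bigl(a_v\mp\gk b_{v_{l_1}}+\tfrac12\gk^2 c_v,\ \mp\gk b_{v_{l_2}},\dots,\mp\gk b_{v_{l_{n(v)}}}\bigr)$, while for $2\le i\le n(v)$ row $i$ has only the two nonzero entries $\gk^2$ in column $i-1$ and $-\gk^2$ in column $i$. Replacing the first column by the sum of all columns (which leaves the determinant unchanged) annihilates rows $2,\dots,n(v)$ by telescoping and turns the $(1,1)$ entry into the scalar $p_\pm(\gk):=\tfrac12 c_v\gk^2\mp\gk\sum_{l\in\cL(v)}b_{v_l}+a_v$; expanding along this column, and noting that the remaining minor is lower triangular with diagonal entries all equal to $-\gk^2$, gives
\[
    \det\tilde Z_\pm^{(v)}(\gk)=(-1)^{n(v)-1}\,\gk^{2(n(v)-1)}\,p_\pm(\gk).
\]
Next I would argue that $p_\pm$ is a nonzero polynomial of degree at least one. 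By~\eqref{eq1ii} the triple $\bigl(a_v,\sum_{l}b_{v_l},c_v\bigr)$ is nonnegative and sums to $1$. Hence $c_v$ and $\sum_{l}b_{v_l}$ cannot both vanish (otherwise $a_v=1$, contradicting $a_v<1$ in~\eqref{eq1i}), so $p_\pm$ has degree $2$ when $c_v\ne0$ and degree $1$ when $c_v=0$; either way it has only finitely many zeros. Therefore there is $R_v>0$ with $p_\pm(\gk)\ne0$ for $|\gk|\ge R_v$, so the displayed determinant is nonzero there and $\tilde Z_\pm^{(v)}(\gk)$ is invertible; taking $R=\max_v R_v$ yields invertibility of $\hat Z_\pm(\gk)$ for $|\gk|\ge R$.

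For the norm bound~\eqref{invnorm} I would use the adjugate formula $\tilde Z_\pm^{(v)}(\gk)^{-1}=\bigl(\det\tilde Z_\pm^{(v)}(\gk)\bigr)^{-1}\,\mathrm{adj}\,\tilde Z_\pm^{(v)}(\gk)$. Every entry of $\tilde Z_\pm^{(v)}(\gk)$ is a polynomial in $\gk$ of degree at most $2$, so each cofactor, being an $(n(v)-1)\times(n(v)-1)$ determinant, has degree at most $2(n(v)-1)$; thus $\|\mathrm{adj}\,\tilde Z_\pm^{(v)}(\gk)\|\le C_1|\gk|^{2(n(v)-1)}$ for $|\gk|\ge1$. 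From the degree of $p_\pm$ one gets $|\det\tilde Z_\pm^{(v)}(\gk)|\ge c_2|\gk|^{2n(v)-1}$ for $|\gk|\ge R$ (enlarging $R$ if needed), so the quotient is in fact bounded and a fortiori $\le C|\gk|^p$ for any $p>0$; taking the maximum over $v$ gives~\eqref{invnorm}.

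The only real subtlety is the degenerate case in the third paragraph: a priori the leading coefficient $\tfrac12 c_v$ of $p_\pm$ may vanish, and one must ensure $p_\pm$ does not collapse to the zero polynomial (which would be fatal to invertibility). This is exactly where the strict inequality $a_v<1$ built into~\eqref{eq1i} is used, as it forbids $c_v=\sum_{l}b_{v_l}=0$ and thereby guarantees $\deg p_\pm\ge1$; everything else is a routine determinant expansion and a polynomial-growth estimate.
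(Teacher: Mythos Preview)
Your proof is correct and follows essentially the same route as the paper: reduce via the permutation $P$ to the block decomposition $\bigoplus_v\tilde Z_\pm^{(v)}(\gk)$, compute $\det\tilde Z_\pm^{(v)}(\gk)=(-\gk^2)^{n(v)-1}p_\pm(\gk)$, and use the adjugate/cofactor formula for the norm bound. The only (inessential) difference is how you exclude $p_\pm\equiv0$: you argue $\deg p_\pm\ge1$ from $a_v<1$, whereas the paper simply evaluates at $\gk=\pm1$ and uses \eqref{eq1ii} to get $p_\pm(\pm1)=1-c_v/2>0$; your version has the mild bonus of yielding $\lvert\det\tilde Z_\pm^{(v)}(\gk)\rvert\gtrsim|\gk|^{2n(v)-1}$, which makes the inverse in fact bounded rather than merely polynomially growing.
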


\begin{proof}[Proof of lemma~\ref{lem2iv}]
Since we have
\begin{equation}    \label{ZP}
    \hat Z_{\pm}(\gk) = P^{-1}\bigl(\tilde A \pm\gk \tilde B + \gk^2\tilde C\bigr) P
\end{equation}
for an orthogonal matrix $P$, for the proof of the first statement it suffices
to show that there exists $R>0$ such that
\begin{equation*}
    \tilde A \pm\gk \tilde B + \gk^2\tilde C
\end{equation*}
are invertible for complex $\gk$ outside of the open ball of radius $R$. For this in turn
it suffices to show that for every vertex $v\in V$ the matrices
\begin{equation*}
\begin{split}
    \tilde A(v) &\pm\gk \tilde B(v) + \gk^2\tilde C(v)\\
        &= \begin{pmatrix}
                    a_v \pm \gk b_{v_{l_1}}+ \gk^2/2\,c_v & \pm\gk b_{v_{l_2}} & \pm\gk b_{v_{l_3}}
                             & \pm\gk b_{v_{l_4}} & \cdots & \pm\gk b_{v_{l_{|\cL(v)|}}}      \\
                    \gk^2    & -\gk^2 & 0      & 0      & \cdots & 0      \\
                    0        & \gk^2  & -\gk^2 & 0      & \cdots & 0      \\
                    0        & 0      & \gk^2  & -\gk^2 & \cdots & 0      \\
                    \vdots   & \vdots & \vdots & \ddots & \ddots & \vdots \\
                    0        & 0      & 0      & 0      & \cdots & -\gk^2
          \end{pmatrix}
\end{split}
\end{equation*}
are invertible for all $\gk\in\C$ with $|\gk|\ge R$. An elementary calculation
gives
\begin{equation*}
    \det\bigl(\tilde A(v) \pm\gk \tilde B(v) + \gk^2\tilde C(v)\bigr)
        = \Bigl(a_v \pm \gk \sum_{l\in \cL(v)}b_{v_l} + \frac{\gk^2}{2}\,c_v\Bigr)
                \bigl(-\gk^2\bigr)^{|\cL(v)|-1}.
\end{equation*}
The choices $\gk=\pm 1$ together with condition~\eqref{eq1ii} show that the polynomial
of second order in $\gk$ in the first factor on the right hand side does not vanish identically.
Therefore, it is non-zero in the exterior of an open ball with some radius $R_v>0$.
Hence, we obtain the first statement for the choice $R = \max_{v\in V} R_v$.
Moreover, from the calculation of the determinants above we also get for every $v\in V$ and
all $\gk\in\C$ with $|\gk|\ge R$ an estimate of the form
\begin{equation}    \label{detinv}
    \bigl|\det\bigl(\tilde A(v) \pm\gk \tilde B(v) + \gk^2\tilde C(v)\bigr)\bigr|^{-1}
        \le \text{const.}
\end{equation}
Thus, using the co-factor formula for
\begin{equation*}
    \bigl(\tilde A(v) \pm\gk \tilde B(v) + \gk^2\tilde C(v)\bigr)^{-1}
\end{equation*}
we find with~\eqref{detinv} the estimate
\begin{equation*}
    \bigl\|\bigl(\tilde A(v) \pm\gk \tilde B(v) + \gk^2\tilde C(v)\bigr)^{-1}\bigr\|
        \le C_v |\gk|^{p_v},\qquad |\gk|\ge R,
\end{equation*}
for some constants $C_v$, $p_v>0$. Consequently we get
\begin{equation*}
    \bigl\|\bigl(\tilde A \pm\gk \tilde B + \gk^2\tilde C\bigr)^{-1}\bigr\|
        \le C |\gk|^p,\qquad |\gk|\ge R,
\end{equation*}
for some constants $C$, $p>0$, and by~\eqref{ZP} we have proved inequality~\eqref{invnorm}.
\end{proof}

With these preparations we can enter the

\begin{proof}[Proof of lemma~\ref{lem2iii}]
Let the data $a$, $b$, $c$ be given as in~\eqref{eq1i}, \eqref{eq1ii}. We have to
show that the inclusion $\cD(A)\subset \cH_{a,b,c}$ is not strict. Assume to the
contrary that the inclusion  $\cD(A)\subset \cH_{a,b,c}$ is strict. We will derive a
contradiction.

Let $R = (R_\gl,\,\gl>0)$ be the resolvent of $A$. Then for every $\gl>0$, $R_\gl$
is a bijection from $C_0(\cG)$ onto $\cD(A)$, that is, $R_\gl^{-1}$ is a bijection from
$\cD(A)$ onto $C_0(\cG)$.  For $\gl>0$ consider the linear mapping
$H_\gl:\,f\mapsto \gl f - 1/2 f''$ from $\cH_{a,b,c}$ to $C_0(\cG)$. On $\cD(A)$
this mapping coincides with the bijection $R^{-1}_\gl$ from $\cD(A)$ onto $C_0(\cG)$.
Therefore our assumption entails that $H_\gl$ cannot be injective. Hence for any
$\gl>0$ there exists $f(\gl)\in\cH_{a,b,c}$, $f(\gl)\ne 0$, with
\begin{equation}    \label{homeq}
    H_\gl f(\gl) = \gl f(\gl) - \frac{1}{2}\,f''(\gl) = 0.
\end{equation}
We will show that $f(\gl)\in\cH_{a,b,c}$ satisfying \eqref{homeq} can only hold when
$f(\gl)=0$ on $\cG$. It will be convenient to change the
variable $\gl$ to $\gk = \sqrt{2\gl}$, and there will be no danger of confusion that
we shall simply write $f(\gk)$ for $f(\gl)$ from now on. Then the solution
of~\eqref{homeq} is necessarily of the form given by
\begin{align}
    f_e(\gk,x) &= r_e(\gk)\,e^{-\gk x}  & e&\in\cE,\,x\in \R_+,\\
    f_i(\gk,x) &= r^+_i(\gk)\,e^{\gk x}+r^-_i(\gk)\,e^{\gk(\rho_i- x)}
                                        & i&\in\cI,\,x\in [0,\rho_i],
\end{align}
and we want to show that for some $\gk > 0$, the boundary conditions~\eqref{eq1iii}
and~\eqref{eq2iii} entail that $r_e(\gk) = r^+_i(\gk) = r^-_i(\gk)=0$ for all
$e\in\cE$, $i\in\cI$. For $\gk>0$, define a column vector $r(\gk)$ of length
$|\cE|+2|\cI|$ by
\begin{equation*}
    r(\gk) = \bigl((r_e(\gk),\,e\in\cE),(r^+_i(\gk),\,i\in\cI),(r^-_i(\gk),\,i\in\cI)\bigr)^t,
\end{equation*}
and introduce the $(|\cE|+2|\cI|)\times (|\cE|+2|\cI|)$ matrices
\begin{equation*}
    X_\pm(\gk)   = \begin{pmatrix}
                    \1 & 0                & 0                \\
                    0  & \1               & \pm e^{\gk \rho} \\
                    0  & \pm e^{\gk \rho} & \1
                   \end{pmatrix}
\end{equation*}
--- appropriately modified in case that $\cE$ or $\cI$ is the empty set --- with
the $|\cI|\times|\cI|$ diagonal matrices
\begin{equation*}
    e^{\gk \rho} = \diag{e^{\gk \rho_i},\,i\in\cI\bigr}.
\end{equation*}
Then the boundary conditions~\eqref{eq1iii}, \eqref{eq2iii} for $f(\gk)$ read
\begin{equation}    \label{Zr}
    Z(\gk)r(\gk) = 0,
\end{equation}
with
\begin{equation}    \label{defZ}
    Z(\gk) = (A+\gk^2C)X_+(\gk) + \gk BX_-(\gk).
\end{equation}
Thus, if we can show that for some $\gk > 0$ the matrix $Z(\gk)$ is invertible, the
proof of the lemma is finished. Note that the matrix-valued function $Z$ is
entire in $\gk$, and therefore so is its determinant. Thus, if can show that $\gk\mapsto \det
Z(\gk)$ does not vanish identically, then it can only vanish on a discrete subset of
the complex plane, and for $\gk$ in the complement of this set $Z(\gk)$ is
invertible. Write
\begin{equation*}
    X_\pm(\gk) = \1 \pm \gd X(\gk),
\end{equation*}
with
\begin{equation*}
    \gd X(\gk) = \begin{pmatrix}
                    0  & 0                & 0            \\
                    0  & 0                & e^{\gk \rho} \\
                    0  & e^{\gk \rho}     & 0
                   \end{pmatrix},
\end{equation*}
so that we can write
\begin{equation*}
    Z(\gk) = \hat Z_+(\gk)\bigl(\1+\gd Z(\gk)\bigr),
\end{equation*}
with
\begin{equation*}
    \gd Z(\gk) = \hat Z_+(\gk)^{-1}\, \hat Z_-(\gk)\,\gd X(\gk).
\end{equation*}
Observe that in case that $\cI=\emptyset$, we obtain $\gd Z(\gk)=0$, and in this
case the invertibility of $Z(\gk)$ for all $\gk$ with $\gk\ge R$ follows from
lemma~\ref{lem2iv}. Hence we assume from now on that $\cI\ne \emptyset$.
Lemma~\ref{lem2iv} provides us with the bound
\begin{equation*}
    \bigl\|\hat Z_+(\gk)^{-1}\hat Z_-(\gk)\bigr\| \le \text{const.}\,|\gk|^{q},
\end{equation*}
for all $\gk\in\C$, $|\gk|\ge R$, and for some $q>0$. On the other hand, we get
\begin{equation*}
    \|\gd X(\gk)\| \le e^{\gk \rho_0},
\end{equation*}
for all $\gk\le 0$ where $\rho_0 = \min_{i\in\cI} \rho_i$. Therefore, there exists a
constant $R'>0$ so that for all $\gk\le -R'$  we have $\|\gd Z(\gk)\|<1$, and therefore
for such $\gk$, $Z(\gk)$ is invertible, i.e., $\det Z(\gk)\ne 0$. Hence there also
exists $\gk > 0$ so that $Z(\gk)$ is invertible, and the proof is finished.
\end{proof}

\begin{remark}\label{rem_sa_bc}
The special (and only) choice of boundary conditions in the form $c_v=0$ and
$b_{v_l}=(1-a_v)/n(v)>0$  for all $v$ also gives rise to a selfadjoint
nonpositive Laplace operator on $L^2(\mathcal{G})$, see theorem~5.1 in \cite{KoSc06}.
The associated semigroup is positivity preserving and agrees on
$C_0(\cG)\cap L^2(\cG)$ with the semigroup associated to a corresponding
Brownian motion on $\cG$. In turn
the theorem just quoted also provides examples of selfadjoint Laplace operators,
whose semigroups are positivity preserving but which are not linked to a
Brownian motion process in the above way.
\end{remark}

\section{Joining Two Metric Graphs}	\label{sect3}
For what follows it will be convenient to write $\cG = (V, \cI, \cE, \p)$ for a metric graph
$\cG$ in order to make explicit that $V$ is its set of vertices, $\cI$ its set
of internal and $\cE$ its set of external edges. $\gd$ is the map from $\cL$ into
$V\cup(V\times V)$ as defined in section~\ref{sect_1}. For simplicity we assume from now on ---
with the exception of the discussion in section~\ref{sect5} --- that the metric graphs under
consideration do not have any \emph{tadpoles}, that is, internal edges $i$ so that
$\p^-(i)=\p^+(i)$.

Throughout this section we suppose that $\cG_k=(V_k,\cI_k, \cE_k,\p_k)$, $k=1$, $2$,
are two finite metric graphs. In the following subsection we shall construct a new metric graph
$\cG=(V,\cI, \cE,\p)$ from $\cG_1$ and $\cG_2$ by connecting some of their external
edges.

It will be convenient to consider the metric graphs $\cG_1$, $\cG_2$ as
subgraphs of the metric graph $\cG_0=\cG_1\uplus\cG_2$  which is their (disjoint)
union: $\cG_0=(V_0,\cI_0,\cE_0,\p_0)$, with $V_0=V_1\cup V_2$, $\cI_0=\cI_1\cup
\cI_2$, $\cE_0=\cE_1\cup \cE_2$, and where the map $\p_0$ comprises the maps
$\p_1$, $\p_2$ in the obvious way.

\subsection{Construction of the graph \boldmath $\cG$} \label{ssect3i}
Suppose that $N$ is a natural number such that $N\le \min(|\cE_1|,|\cE_2|)$. For
$k=1$, $2$, select subsets $\cE_k'\subset \cE_k$ of edges with
$|\cE_1'|=|\cE_2'|=N$ to be joined. Let these sets be labeled as follows
\begin{equation*}
	\cE'_1 = \{e_1,\dotsc,e_N\},\quad
	\cE'_2 = \{l_1,\dotsc,l_N\}.
\end{equation*}
\begin{figure}[ht]
\begin{center}
    \includegraphics[scale=.8]{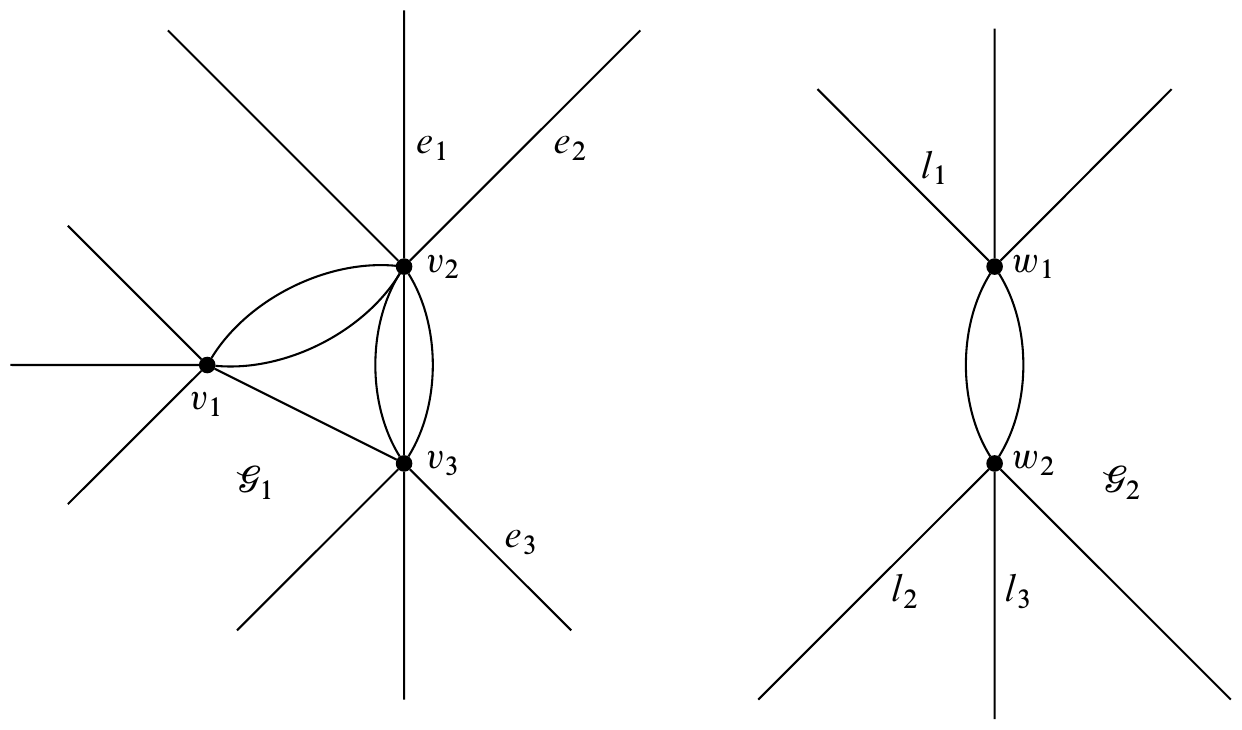}
    \caption{Two metric graphs $\cG_1$, $\cG_2$, to be joined by connecting the
             pairs of external lines
             $(e_1,l_1)$, $(e_2,l_2)$ and $(e_3,l_3)$.} \label{fig2}
\end{center}
\end{figure}

In addition we assume that we are given strictly positive numbers $b_1$, \dots, $b_N$,
which will serve as the lengths of the new internal edges, as well as $\gs_k\in\{-1,1\}$,
$k=1$, \dots, $N$, which will determine their orientations.
For every $k\in\{1,\dotsc,N\}$ we associate with the interval $[0,b_k]$ an abstract
edge $i_k$ (not in $\cI_0$) which is isomorphic to $[0,b_k]$. Set $\cI_c =
\{i_1,\dotsc,i_N\}$, and
\begin{align*}
	V   &= V_0,\\
	\cI &= \cI_0\cup\cI_c,\\
	\cE &= \cE_0\setminus (\cE'_1\cup\cE'_2).
\end{align*}
The map $\p$ is constructed in two steps: Let $\p'$ be the restriction of $\p_0$ to
$\cI_0\cup\cE_0\setminus(\cE'_1\cup\cE'_2)$. Then $\p$ is the extension of $\p'$ to
$\cI\cup\cE$, which is defined by
\begin{equation*}
	\p(i_k) = \begin{cases}
				\bigl(\p_1(e_k),\p_2(l_k)\bigr),	&\text{if $\gs_k=1$},\\
				\bigl(\p_2(l_k),\p_1(e_k)\bigr),	&\text{if $\gs_k=-1$},
			 \end{cases}
                \qquad k=1,\dotsc,N.
\end{equation*}
Figure~\ref{fig1} shows an example of a metric graph which is constructed from the
two metric graphs in figure~\ref{fig2} by joining the $N=3$ pairs of external edges
$(e_1,l_1)$, $(e_2,l_2)$ and $(e_3,l_3)$. The new internal edges $i_1$, $i_2$ and
$i_3$ have the lengths $1$, $\sqrt{2}$ and $1$ respectively (in some scale).

Conversely, let a metric graph $\cG$ be given. Associate with every vertex $v\in V$
of $\cG$ a single vertex graph $\cG(v)$ with vertex $v$ and $n(v)$ external
edges, where $n(v)$ is the number of edges incident with $v$ in $\cG$. Then it is clear
that we can reconstruct $\cG$ from the single vertex graphs $\cG(v)$, $v\in V$,
by finitely many applications of the joining procedure described above.

For the purposes below it will be convenient to introduce some additional notation.
We let $V_c\subset V$ denote the subset of vertices of $\cG$ which are connected to
each other by the new internal edges in $\cI_c$. That is, $v\in V_c$ is such that
there exists at least one $i\in\cI_c$ with $v\in\p(i)$. For notational simplicity,
here and below we also use $\p(l)$ to denote the set consisting of $\p^-(l)$ and
$\p^+(l)$ if $l\in \cI$, and of $\p(l)$ if $l\in\cE$. In the example of the
figures~\ref{fig1} and~\ref{fig2}, $V_c = \{v_2,v_3,w_1,w_2\}$.

Consider a vertex $v\in V_c$ which belongs to $\cG_1$, and let $i_k\in\cI_c$,
$k\in\{1,\dotsc,N\}$, be an internal edge connecting $v$ to $\cG_2$, i.e.,
$v\in\p(i_k)$. Then the point $\eta\in\cG_2^\circ$ with local coordinates
$(l_k,b_k)$ is called a \emph{shadow vertex} of the vertex $v$. $\sh(v)\subset \cG^0_2$
is the set of all shadow vertices of $v$. If $v\in V_c\cap\cG_2$, its set of shadow
vertices (which are points in $\cG_1^\circ$) is defined analogously. $V_s=\sh(V_c)
= \cup_{v\in V_c}\sh(v)$ is the set of all shadow vertices. If $\xi\in V_s$, then
there exists a unique $v\in V_c$ so that $\xi\in\sh(v)$. We put $\gk(\xi)=v$ and
thereby define a mapping from $V_s$ onto $V_c$. Of course, in general $\gk$ is not
injective. In figure~\ref{fig3} the shadow vertices of the example above are
depicted as small circles on the external edges, i.e., $V_s=\{\xi_1,\xi_2,\xi_3,\eta_1,
\eta_2,\eta_3\}$. For example, $\sh(v_2)=\{\eta_1,\eta_2\}$, $\sh(w_1)=\{\xi_1\}$,
and $\gk(\xi_2)=w_2$, $\gk(\eta_2)=v_2$.
\begin{figure}[ht]
\begin{center}
    \includegraphics[scale=.8]{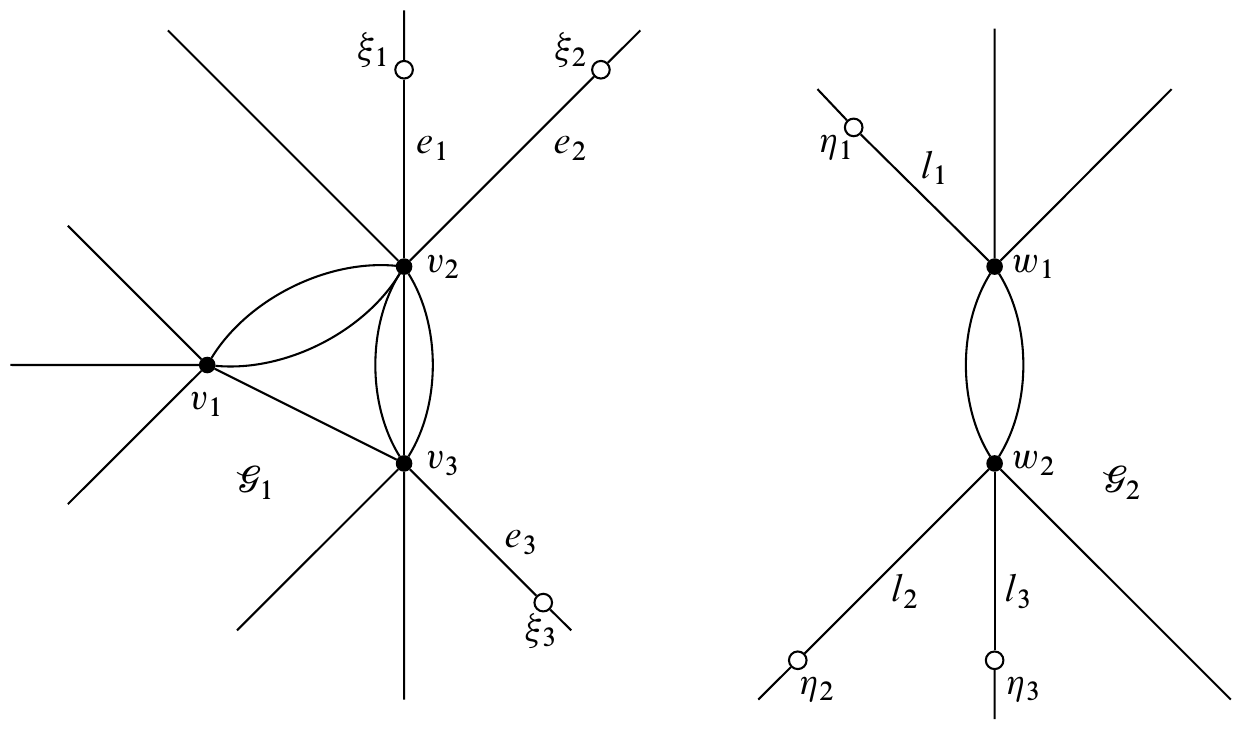}
    \caption{The graphs $\cG_1$, $\cG_2$ with the shadow vertices (small circles).}
    \label{fig3}
\end{center}
\end{figure}

\subsection{Construction of a Preliminary Version of the Brownian Motion} \label{ssect3ii}
From now we suppose that we are given a family of probability spaces
\begin{equation*}
    (\Xi^0,\cC^0,Q^0_\xi), \qquad \,\xi\in\cG_0,
\end{equation*}
and that thereon a Brownian motion with state space $\cG_0$ in the sense
of definition~\ref{def1i} is defined. This Brownian motion is
denoted by $Z^0=(Z^0(t),\,t\in\R_+)$. Actually, since $\cG_0=\cG_1\cup \cG_2$ and
$\cG_1$, $\cG_2$ are disconnected, this is the same as saying that we are given a
Brownian motion on $\cG_1$ and one on $\cG_2$. However, notationally it will be more
convenient to view this as one stochastic process. We assume, as we may, that $Z^0$
has exclusively c\`adl\`ag paths which are continuous up to the lifetime $\zeta^0$ of $Z^0$.
$\cF^0=(\cF^0_t,\,t\in\R_+)$ denotes the natural filtration of $Z^0$. The hitting
time of $V_s$ by $Z^0$ is denoted by $\tau^0$, i.e.,
\begin{equation*}
    \tau^0 = \inf\,\{t>0,\,Z^0(t) \in V_s\}.
\end{equation*}
Furthermore, we assume that $\vt=(\vt_t,\,t\in\R_+)$ is a family of shift operators
for $Z^0$ acting on $\Xi^0$.

For any topological space $(T,\cT)$ denote by $C_\gD(\R_+,T)$ the space of mappings
$\go$ from $\R_+$ into $T\cup\{\gD\}$ which are right continuous, have left limits in
$T$, are continuous up to the lifetime
\begin{equation*}
    \zeta_\go = \inf\,\{t>0,\,\go(t)= \gD\},
\end{equation*}
and which are such that $\go(t)=\gD$ implies $\go(s)=\gD$ for all $s\ge t$. In particular
and in the present context, $\go\in\CD(\R_+,\cG_0)$ is either continuous from $\R_+$ into
$\cG_0$ or it has a jump from $\cG_1$ or $\cG_2$ to $\gD$, but there can be no jump
from $\cG_1$ to $\cG_2$ or vice versa.

We shall make use of some special versions of the process $Z^0$, which we introduce
now. For every $v\in V_c$, $Z^1_v=(Z^1_v(t),\,t\in\R_+)$ denotes a Brownian motion
on $\cG_0$ defined on another probability space $(\Xi^1_v,\cC^1_v,\mu^1_v)$ such that
under $\mu^1_v$, $Z^1_v$ is equivalent to $Z^0$ under $Q^0_v$. We
suppose that $Z^1_v$ exclusively has paths which start in $v$ and which belong to
$\CD(\R_+,\cG_0)$. (For example, one can use a standard path space construction to
obtain such a version from $(\Xi^0,\cC^0, Q^0_v, Z^0)$.) The hitting time of $V_s$ by
$Z^1_v$ is denoted by $\tau^1_v$, its lifetime by $\zeta^1_v$.

The idea to define the preliminary version $Y=(Y(t),\,t\in\R_+)$ of the Brownian
motion on $\cG$ is to construct its paths as follows. Let $\xi\in\cG$ be a given
starting point. $\cG$ (viewed as a set) has the following decomposition (cf.\
figure~\ref{fig4}):
\begin{equation*}
	\cG =  \hat\cG_1\uplus\hat\cG_2,
\end{equation*}
with
\begin{align*}
	\hat\cG_1 &= \cG_1\setminus \bigl(e^\circ_1\cup\dotsb\cup e^\circ_N\bigr),\\
    \hat\cG_2 &= \bigl(\cG_2\setminus \bigl(l^\circ_1\cup\dotsb\cup l^\circ_N\bigr)\bigr)
                        \cup \bigl(i_1^\circ\cup\dotsc\cup i_N^\circ\bigr).
\end{align*}
\begin{figure}[ht]
\begin{center}
    \includegraphics[scale=.8]{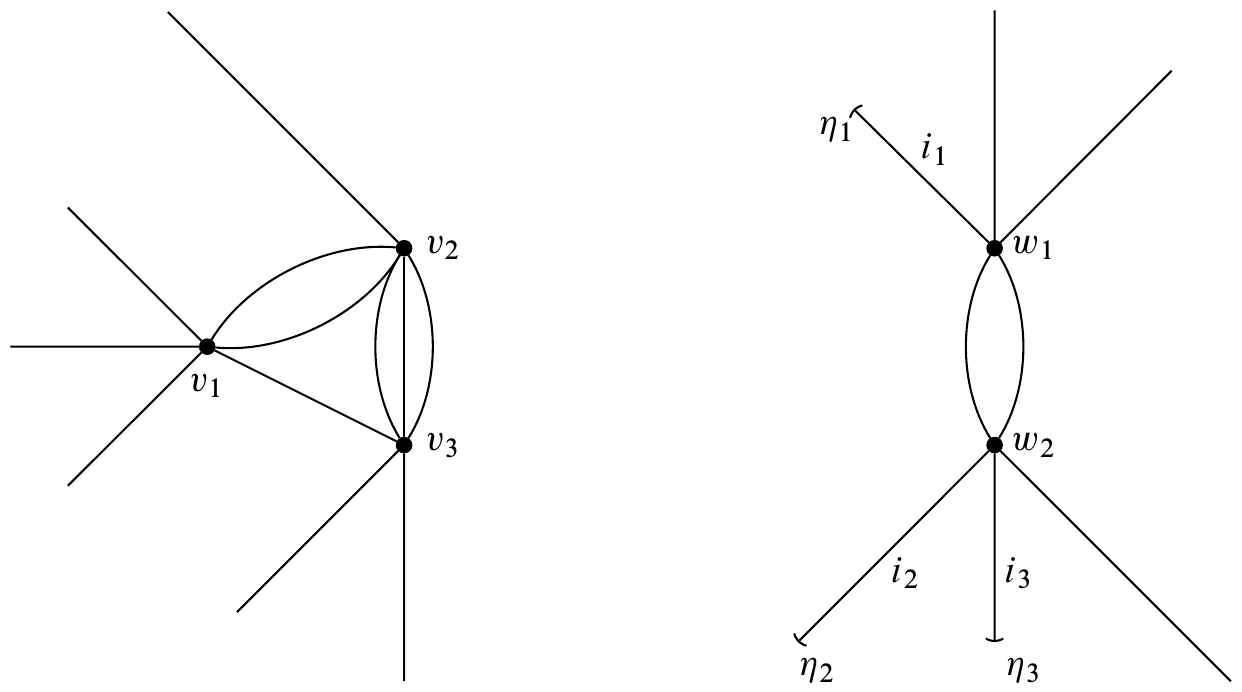}
    \caption{The starting points of $Y$.} \label{fig4}
\end{center}
\end{figure}
Thus we may consider $\xi$ instead as a point in $\hat\cG_1\uplus\hat\cG_2\subset
\cG_0$.

We pause here for the following remark: Of course, the convention we make that
all new open inner edges $i^\circ_1,\dotsc,i^\circ_N$ are attached to $\hat\cG_2$
is somewhat arbitrary. Just as well any subset of them could have been attached
to $\hat\cG_1$ instead. Even though different conventions lead to processes
with different paths, the main result of this section, theorem~\ref{thm3xiii},
remains unchanged. It follows that all resulting processes are equivalent to each
other.

Let $Y$ start as $Z^0$ in $\xi\in\hat\cG_1\uplus\hat\cG_2$, and consider one trajectory.
(In order to avoid any confusion, let us point out that even though $\xi\in\hat\cG_k$,
$k=1$, $2$, the process $Z^0$ moves in $\cG_k$.) If this
trajectory reaches the cemetery point $\gD$ before hitting the set $V_s$ of shadow
vertices, it is the complete trajectory of $Y$ and it stays forever at the cemetery.
If the trajectory hits a shadow vertex $\eta\in V_s$ before its lifetime expires,
this piece of the trajectory of $Y$ ends at the hitting time $\tau^0$. Set
$v=\gk(\eta)$, and let the trajectory of $Y$ continue with an (independent) trajectory
of $Z^1_v$ until its lifetime expires or it hits a shadow vertex, and so on.
Figure~\ref{fig5} explains the idea.
\begin{figure}[ht]
\begin{center}
    \includegraphics[scale=.8]{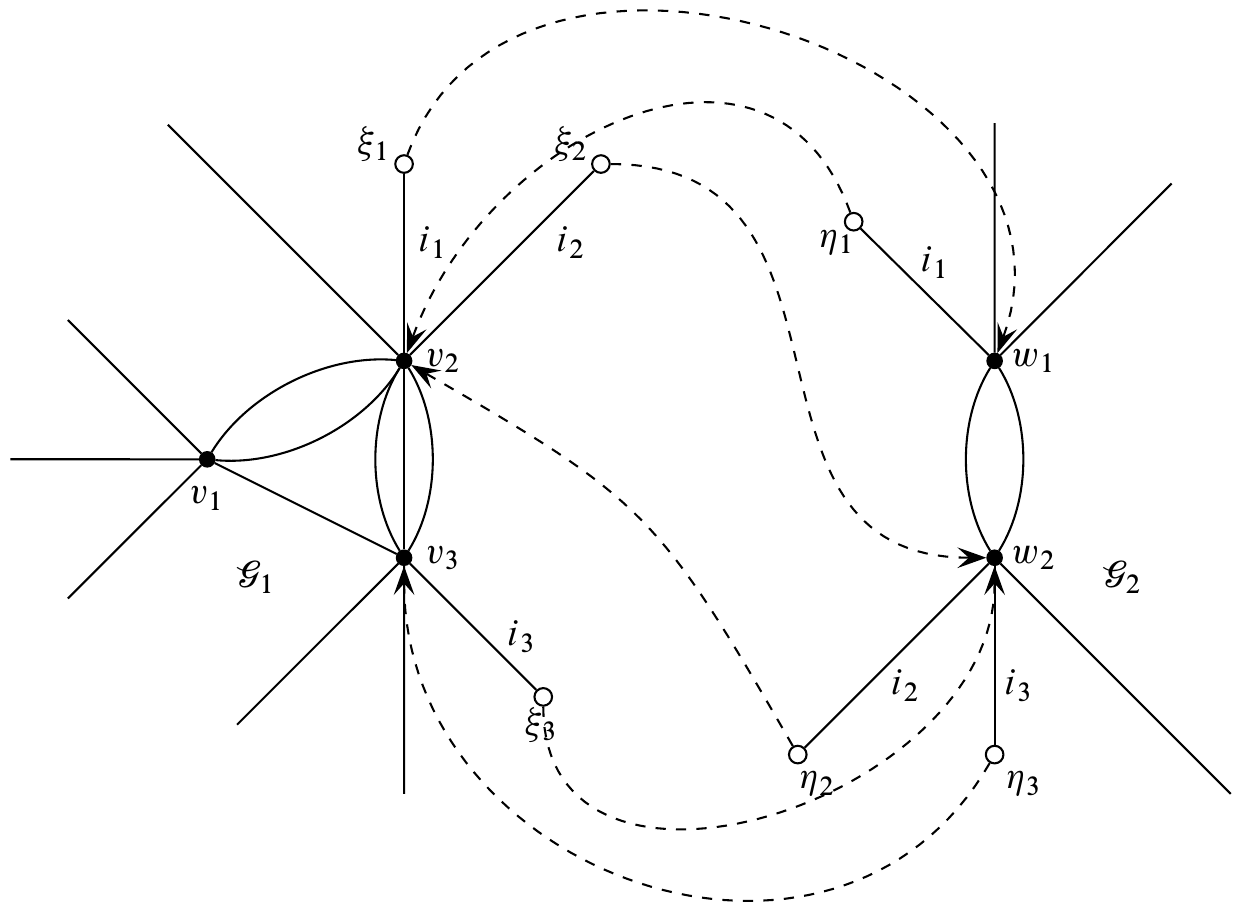}
    \caption{The construction of the process $Y$.} \label{fig5}
\end{center}
\end{figure}

The construction described above is formalized in the following way. Define
\begin{equation*}
    \Xi^1 = \BigCart_{v\in V_c} \Xi^1_v,\quad
    \cC^1 = \bigotimes_{v\in V_c} \cC^1_v,\quad
    Q^1   = \bigotimes_{v\in V_c} \mu^1_v,
\end{equation*}
and view each of the stochastic processes $Z^1_v$, $v\in V_c$, as well as the random
variables $\tau^1_v$, $\zeta^1_v$, as defined on this product space. Let
\begin{equation*}
    \bigl(\Xi^n,\cC^n,Q^n,Z^n,\tau^n,\zeta^n\bigr),\qquad n\in\N,\,n\ge 2,
\end{equation*}
be an independent sequence of copies of
\begin{equation*}
    \bigl(\Xi^1,\cC^1,Q^1,Z^1,\tau^1,\zeta^1\bigr),
\end{equation*}
where $Z^1=(Z^1_v,\,v\in V_c)$ and similarly for $\tau^1$, $\zeta^1$. Next set
\begin{equation*}
    \Xi   = \BigCart_{n=0}^\infty \Xi^n,\quad
    \cC   = \bigotimes_{n=0}^\infty \cC^n,\quad
    Q_\xi = Q^0_\xi \otimes\Bigl(\bigotimes_{n=1}^\infty Q^n\Bigr),\  \xi\in\cG.
\end{equation*}

The procedure sketched above of pasting together pieces of the trajectories of the
various processes $Z^n_v$ is controlled by a Markov chain $(K_n,\,n\in\N)$ which
moves at random times $(S_n,\,n\in\N)$ in the state space $V_c\cup\{\gD\}$. We
set out to construct this chain $\bigl((S_n,K_n),\,n\in\N\bigr)$. Define $S_1=\tau^0$. On
$\{S_1=+\infty\}$, i.e., in the case when $\zeta^0<\tau^0$, set $K_1 = \gD$.
Otherwise define
\begin{equation*}
    K_1 = \gk\bigl(Z^0(\tau^0)\bigr).
\end{equation*}
Observe that since all processes considered have right continuous paths, they are all
measurable stochastic processes, and therefore the evaluation of their time argument at
a random time yields a well-defined random variable. Set $S_2=+\infty$ on
$\{S_1=+\infty\}$, while
\begin{equation*}
    S_2 = S_1 + \tau^1_{K_1}
\end{equation*}
on $\{S_1<+\infty\}$. On $\{S_2=+\infty\}$ put $K_2=\gD$, and on its complement
\begin{equation*}
    K_2 = \gk\bigl(Z^1_{K_1}(\tau^1_{K_1})\bigr).
\end{equation*}
These construction steps are iterated in the obvious way: The
sequence
\begin{equation*}
    \bigl((S_n,K_n),\,n\in\N\bigr)
\end{equation*}
is inductively defined by $S_n=+\infty$ and $K_n=\gD$ on $\{S_{n-1}=+\infty\}$,
while
\begin{align*}
	S_n &= S_{n-1} + \tau^{n-1}_{K_{n-1}},\\[1ex]
	K_n &= \gk\bigl(Z^{n-1}_{K_{n-1}}(\tau^{n-1}_{K_{n-1}})\bigr)
\end{align*}
on $\{S_{n-1}<+\infty\}$.

Note that by construction $K_n = \gD$, $n\in\N$, if and only if $S_n=+\infty$,
and in that case $K_{n'}=\gD$, $S_{n'}=+\infty$ for all $n'\ge n$. Thus
$(+\infty,\gD)$ is a cemetery state for the chain $((S_n,K_n),\,n\in\N)$.

For example with a Borel--Cantelli argument it is not hard to see (cf.\
also~\cite{BMMG0}) that there exists a set $\Xi'\in\cC$ so that for all $\xi\in\cG$,
$Q_\xi(\Xi')=0$, and for all $\go\in\Xi\setminus\Xi'$  the sequence $(S_n(\go),\,n\in\N)$
increases to $+\infty$ in such a way that for all $n\in\N$, $S_n(\go)<S_{n+1}(\go)$ holds
when $S_n(\go)<+\infty$.

Now we are ready to construct $Y=(Y(t),\in\R_+)$. Let $\xi\in\cG =
\hat\cG_1\uplus\hat\cG_2$ be a given starting point, and suppose that $t\in\R_+$ is
given. On $\Xi'$ set $Y(t)=\gD$. On $\Xi\setminus\Xi'$ there is a unique $n\in\N_0$
so that $t\in [S_n, S_{n+1})$, with the convention $S_0=0$. If $t\in [0,S_1)$,
define $Y(t) = Z^0(t)$. If $t\in [S_n, S_{n+1})$ for $n\in\N$, then necessarily
$S_n$ is finite, so that $K_n\in V_c$, and we define
\begin{equation*}
	Y(t) = Z^n_{K_n}(t-S_n).
\end{equation*}
In addition, we make the convention $Y(+\infty)=\gD$. The natural
filtration generated by $Y$ will be denoted by $\cF^Y=(\cF^Y_t,\,t\in\R_+)$.

It follows from the construction of $Y$ that $\gD$ is a cemetery state for $Y$.
Indeed, suppose that $\go\in\Xi\setminus \Xi'$, and that the trajectory
$Y(\,\cdot\,,\go)$ reaches the point $\gD$ at a finite time $\zeta^Y(\go)$. This
implies that there is an $n\in\N_0$ such that $\zeta^Y(\go)\in
[S_n(\go),S_{n+1}(\go))$. Then $S_n(\go)$ is finite, and therefore $K_n(\go)\in V_c$
so that $Y(\,\cdot\,,\go)$ is equal to $Z^n_{K_n}(\,\cdot\,-S_n(\go),\go)$ on the
interval $[S_n(\go),S_{n+1}(\go))$, and this trajectory reaches $\gD$ before hitting
a shadow vertex. Hence $\tau^n_{K_n}=+\infty$ which entails that
$S_{n+1}(\go)=+\infty$. Consequently after $S_n(\go)$ there are no finite
crossover times for this trajectory, and therefore $Y(\,\cdot\,,\go)$ stays
at~$\gD$ forever. Furthermore note that the left limit $Y(\zeta^Y(\go)-,\go)$
at $\zeta^Y(\go)$ belongs to $V_0$.

In terms of the stochastic process $Y$ the random times $S_n$, $n\in\N$, have the
following description. Suppose that $Y$ starts in $\xi\in\cG$. Then $S_1$ is the
hitting time of $V_c$. But if $\xi\in V_c$, then actually it is the hitting time of
$V_c\setminus\{\xi\}$, because it hits a vertex in $V_c$ which corresponds to the
first hitting of a shadow vertex, i.e., a point in $\cG_0$ different from $\xi$,
by $Z^0$. In particular, $S_1>0$. Similarly, $S_n$ is the hitting time of
$V_c\setminus\{K_{n-1}\}$ by $Y$ after time $S_{n-1}$. In appendix~\ref{appA} it
is shown that for every $n\in\N$, $S_n$ is a stopping time with respect to $\cF^Y$.

It follows from its construction that $Y$ is a normal process,
that is, for every $\xi\in\cG$, $Q_\xi(Y(t=0)=\xi)=1$. Furthermore, all paths of $Y$ belong
to $\CD(\R_+,\cG)$. Let $S_V$ be the hitting time of the set of vertices $V$ of
$\cG$ by $Y$. Then $S_V \le S_1$, because $V_c\subset V$ and therefore we find that
$Y(\,\cdot\,\land S_V)$ is pathwise equal to $Z^0(\,\cdot\,\land S^0_V)$, where
$S^0_V$ denotes the hitting time of $V$ by $Z^0$. Suppose that the starting point
$\xi$ belongs to $l^\circ$, $l\in\cI\cup\cE$, and $l$ is isomorphic to the interval
$I$. Then by definition of $Z^0$ (cf.\ definition~\ref{def1i}), the stopped
process $Z^0(\,\cdot\,\land S^0_V)$ is equivalent to a standard Brownian motion on the
interval $I$ with absorption at the endpoint(s) of $I$. Hence the same is true for
$Y$: $Y(\,\cdot\,\land S_V)$ is equivalent to a standard Brownian motion on $I$
with absorption at the endpoint(s) of $I$.

\subsection{Markov property of \boldmath $Y$} \label{ssect3iii}
For any measurable space $(M,\cM)$, $B(M)$ denotes the space of bounded, measurable
functions on $M$. Every $f\in B(\cG^n)$, $n\in\N$, is extended to
$(\cG\cup\{\gD\})^n$ by $f(\xi_1,\dotsc,\xi_n)=0$,
$(\xi_1,\dotsc,\xi_n)\in(\cG\cup\{\gD\})^n$, whenever there is an index
$k\in\{1,\dotsc, n\}$ so that $\xi_k=\gD$. In this subsection we shall prove the
following

\begin{proposition} \label{prop3i}
$Y$ has the simple Markov property: For all $f\in B(\cG)$, $s$, $t\in\R_+$,
$\xi\in\cG$,
\begin{equation}    \label{eq3i}
    E_\xi\bigl(f\bigl(Y(s+t)\bigr)\cond \cF^Y_s\bigr)
        = E_{Y(s)}\bigl(f\bigl(Y(t)\bigr)\bigr)
\end{equation}
holds true $Q_\xi$--a.s.\ on $\{Y(s)\ne\gD\}$.
\end{proposition}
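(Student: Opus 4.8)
The plan is to establish the Markov property of $Y$ by exploiting the inductive, piecewise construction of its paths together with the strong Markov property of the underlying process $Z^0$. The key structural fact is that $Y$ is built by concatenating independent pieces, where each piece is governed by $Z^0$ (or an independent copy $Z^n_v$) run until it either dies or hits a shadow vertex, and where the crossover times $S_n$ serve as the junctions. First I would record that, by the construction in appendix~\ref{appA}, each $S_n$ is an $\cF^Y$-stopping time and that on $\{S_n < +\infty\}$ the chain value $K_n\in V_c$ is $\cF^Y_{S_n}$-measurable. The guiding idea is that the future of $Y$ after time $s$, conditioned on $\cF^Y_s$, depends on the past only through the present location $Y(s)$, precisely because the processes $Z^n_v$ are independent copies and because the renewal structure $(S_n,K_n)$ restarts afresh at each crossover.

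The main technical step is to condition on which interval $[S_n, S_{n+1})$ the time $s$ falls into. On $\{S_n \le s < S_{n+1}\}$ one has $Y(s+\cdot) = Z^n_{K_n}(s - S_n + \cdot)$ until the next crossover at $S_{n+1}$; I would apply the simple (or strong) Markov property of the Brownian motion $Z^n_{K_n}$ on $\cG_0$ at time $s-S_n$ to decouple the trajectory before and after $s$. The subtlety is that $s$ is a deterministic time but $S_n$ is random, so the relevant shift is at the random time $s - S_n$ of the process $Z^n_{K_n}$; here the strong Markov property of $Z^0$ (hence of each copy), guaranteed by definition~\ref{def1i} and the remarks~\ref{def1ii}, is what makes the argument go through. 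After the crossover, the trajectory continues with $Z^{n+1}_{K_{n+1}}$ starting at $K_{n+1}\in V_c$, and by the independence of the copies $(\Xi^m,\cC^m,Q^m)$ across $m$, together with the fact that $Z^1_v$ under $\mu^1_v$ is equivalent to $Z^0$ under $Q^0_v$, the continuation is distributed exactly as a fresh copy of $Y$ started from the current position.

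Assembling these pieces, I would verify~\eqref{eq3i} by showing that both sides agree on each event $\{S_n \le s < S_{n+1}\}\cap\{Y(s)\ne\gD\}$ and summing over $n\in\N_0$. Concretely, on such an event the conditional law of the whole future $Y(s+\cdot)$ given $\cF^Y_s$ is the law of $Y$ restarted at $Y(s)$, because (i) the remaining lifetime of the current piece $Z^n_{K_n}$ has the correct conditional law by the Markov property of $Z^0$, and (ii) all subsequent pieces are independent copies whose concatenation reproduces the construction of $Y$ from scratch. Applying $f$ at time $s+t$ and taking expectations then yields $E_{Y(s)}(f(Y(t)))$ on $\{Y(s)\ne\gD\}$, as required.

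I expect the main obstacle to be the careful bookkeeping at the random junctions: one must justify that conditioning on $\cF^Y_s$ correctly separates the ``past'' of the current piece (the portion $Z^n_{K_n}$ on $[0, s-S_n]$, which together with $S_n$ and $K_n$ generates $\cF^Y_s$ up to null sets) from its independent ``future,'' and that the $\cF^Y_s$-measurability of $Y(s)$ and of the indicator $\mathbf{1}_{\{S_n\le s<S_{n+1}\}}$ lets one pull the starting point $Y(s)$ out as the initial condition for the restarted process. This requires a clean statement that $\cF^Y_s$ restricted to $\{S_n\le s< S_{n+1}\}$ is generated by the first $n$ pieces and the initial segment of the $(n+1)$st, which in turn relies on the stopping-time property of the $S_n$ established in the appendix. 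Once that measurability structure is in place, the identity~\eqref{eq3i} follows from the independence of the copies and the Markov property of $Z^0$ without further difficulty.
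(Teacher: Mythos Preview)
Your plan is correct and follows essentially the same strategy as the paper: decompose according to the interval $[S_m,S_{m+1})$ containing $s$, exploit the product structure $(\Xi,\cC,Q_\xi)=\bigl(\Xi^{\le m-1}\times\Xi^{\ge m},\ldots\bigr)$ so that conditionally on the first $m$ factors the time $s-S_m$ becomes deterministic, apply the (simple) Markov property of $Z^0$ to the current piece, and use the independence and equidistribution of the later copies to reconstitute a fresh $Y$ started at $Y(s)$. The paper carries out exactly this bookkeeping, but does so through an explicit operator calculus: it introduces the auxiliary filtration $\cB_m$ coming from the product structure, the random variables $W_m(h,g,u)$, the kernels $R_m$, and the chain semigroup $U_n$ (lemma~\ref{lem3ii}), and then proves lemmas~\ref{lem3iii}--\ref{lem3vii} which encode precisely the ``renewal'' identities you describe; the final assembly further decomposes over the index $n\ge m$ with $S_n\le s+t<S_{n+1}$ (equation~\eqref{eq3xiv}). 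Your anticipated obstacle---identifying $\cF^Y_s\cap\{S_m\le s<S_{m+1}\}$ with the $\sigma$-algebra generated by the first $m$ pieces together with the initial segment of the $(m{+}1)$st---is handled in the paper not by proving such an identification directly, but by testing~\eqref{eq3i} against a generating family $g(Y(w))\,1_{\{w<S_m\}}\,W_m(h,1,u)$ with $u_r=s$, which sidesteps the need for a structural description of $\cF^Y_s$.
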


The proof of proposition~\ref{prop3i} is somewhat technical and lengthy.
Therefore it will be broken up into a sequence of lemmas.

For every $n\in\N$ the probability space $(\Xi,\cC,Q_\xi)$, $\xi\in\cG$, underlying
the construction of the process $Y$ may be written as the product of the probability
spaces $(\Xil, \cCl, \Ql_\xi)$ and $(\Xiu, \cCu, \Qu)$ with
\begin{align*}
	\Xil &= \BigCart_{j=0}^{n-1} \Xi^j,  &
				\Xiu &= \BigCart_{j=n}^{\infty} \Xi^j,\\
	\cCl &= \bigotimes_{j=0}^{n-1} \cC^j, &
				\cCu &= \bigotimes_{j=n}^{\infty} \cC^j,\\
	\Ql_\xi &= Q_\xi^0\otimes\Bigl(\bigotimes_{j=1}^{n-1} Q^j\Bigr), &
				\Qu &= \bigotimes_{j=n}^{\infty} Q^j.
\end{align*}
Introduce a family $\cB=(\cB_n,\,n\in\N_0)$ of sub--$\gs$--algebras of $\cC$ by
setting
\begin{equation*}
    \cB_n = \cC^{\le n-1}\times \Xi^{\ge n}.
\end{equation*}
Obviously, the family $\cB$ forms a filtration. Furthermore, from the construction
of $K_n$ and $S_n$ it is easy to see that the chain $((S_n, K_n),\,n\in\N)$ is
adapted to $\cB$.

First we study the chain $((S_n,K_n),\,n\in\N)$ in more detail. Recall our
convention that $S_0=0$. We set $\cB_0=\{\emptyset,\Xi\}$, and under the law
$Q_v$, $v\in V_c$, we put $K_0=v$. $g\in\bRV$ is extended to $\Rbp\times
\bigl(V_c\cup\{\gD\}\bigr)$ by $g(+\infty,\,\cdot\,)=g(\,\cdot\,,\gD)
=g(+\infty,\gD)=0$. For $g\in\bRV$, $n\in\N_0$, define
\begin{equation}    \label{eq3ii}
    (U_n g)(s,v) = E_{v}\bigl(g(s+S_n, K_n)\bigr),\qquad s\in\R_+,\,v\in V_c.
\end{equation}
Note that $U_0=\text{id}$, and that for every $g\in\bRV$ and all $n\in\N$,
$U_n g\in\bRV$. In particular, the convention mentioned above applies to $U_n g$,
too.
\goodbreak

\begin{lemma}   \label{lem3ii}
{\ }
\begin{enum_a}
    \item For all $m$, $n\in\N$, $m\le n$,  $\xi\in\cG$, $s\ge 0$, and every
        $g\in\bRV$ the following formula holds true $Q_\xi$--a.s.\
            \begin{equation}    \label{eq3iii}
                E_\xi\bigl(g(s+S_n, K_n) \cond \cB_m)\bigr)
                    = (U_{n-m}g)(s+S_m,K_m).
            \end{equation}
    \item $(U_n,\,n\in\N_0)$ forms a semigroup of linear maps on $\bRV$. In
        particular, for all $(s,v)\in\R_+\times V_c$ under $Q_{v}$ the chain
        $\bigl((s+S_n, K_n),\,n\in\N_0\bigr)$ is a homogeneous Markov chain with
        transition kernel
            \begin{equation*}
                P\bigl((s,v), A\bigr)
                    = Q_{v}\bigl((s+S_1,K_1)\in A\bigr),\qquad A\in\cB(\R_+\times V_c).
            \end{equation*}
\end{enum_a}
\end{lemma}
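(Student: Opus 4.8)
The plan is to isolate the one-step transition as the engine of the proof, bootstrap to the general formula in (a) by induction on $n-m$, and then deduce (b) almost formally. The decisive structural observation is that $\cB_m=\cCl[m-1]\times\Xiu[m]$ records only the information carried by the first $m$ blocks $\Xi^0,\dots,\Xi^{m-1}$, while the increment from step $m$ to step $m+1$ --- namely $S_{m+1}-S_m=\tau^m_{K_m}$ and $K_{m+1}=\gk\bigl(Z^m_{K_m}(\tau^m_{K_m})\bigr)$ --- is driven exclusively by the block $\Xi^m$, which is independent of $\cB_m$. Since $(S_m,K_m)$ is $\cB_m$-measurable, this is exactly the set-up for a freezing argument.

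First I would establish the one-step identity
\[
E_\xi\bigl(g(s+S_{m+1},K_{m+1})\cond\cB_m\bigr)=(U_1 g)(s+S_m,K_m)\qquad Q_\xi\text{-a.s.}
\]
On $\{S_m=+\infty\}$ both sides vanish by the convention that $g$ (hence $U_1g$) is zero whenever one of its arguments is $+\infty$ or $\gD$, so it suffices to work on $\{S_m<+\infty\}$, where $K_m\in V_c$. Because $V_c$ is finite, I decompose this event into the $\cB_m$-measurable pieces $\{K_m=v\}$, $v\in V_c$. On $\{K_m=v\}$ the increment is $\tau^m_v$ and $\gk\bigl(Z^m_v(\tau^m_v)\bigr)$, a functional of $\Xi^m$ alone; freezing the $\cB_m$-measurable quantity $s+S_m$ and integrating out the independent block $\Xi^m$ gives $E^{Q^m}\bigl(g(s+S_m+\tau^m_v,\gk(Z^m_v(\tau^m_v)))\bigr)$. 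Since $Z^m_v$ under $Q^m$ is a copy of $Z^1_v$ under $\mu^1_v$, which in turn is equivalent to $Z^0$ under $Q^0_v$, this integral equals $E_v\bigl(g((s+S_m)+S_1,K_1)\bigr)=(U_1g)(s+S_m,v)$. Summing over $v$ yields the claim.

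Part (a) then follows by induction on $d=n-m$. The cases $d=0$ (trivial, $U_0=\mathrm{id}$) and $d=1$ (the one-step identity) are in hand. For the step, I condition on $\cB_{m+1}\supset\cB_m$ and invoke the tower property: the inductive hypothesis applied to the pair $(m+1,n)$ gives $E_\xi(g(s+S_n,K_n)\cond\cB_{m+1})=(U_{d-1}g)(s+S_{m+1},K_{m+1})$, and then the one-step identity applied to the function $U_{d-1}g\in\bRV$ turns $E_\xi(\,\cdot\,\cond\cB_m)$ into $(U_1U_{d-1}g)(s+S_m,K_m)$. That $U_1U_{d-1}=U_d$ need not be assumed separately: taking $\xi=v\in V_c$ in the already-established case $(1,d)$ and applying $E_v$ collapses $U_1U_{d-1}g(s,v)=E_v(g(s+S_d,K_d))=U_dg(s,v)$, so the relation is supplied by the same induction and no circularity arises.

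Finally, part (b) is a formal consequence of (a). Evaluating (a) at $\xi=v\in V_c$ and applying $E_v$ to both sides gives $U_ng=U_m U_{n-m}g$, i.e.\ the semigroup law on $\bRV$; linearity of each $U_n$ is immediate from linearity of the integral. Specialising the one-step identity to consecutive indices exhibits the transition mechanism, showing that under $Q_v$ the sequence $\bigl((s+S_n,K_n),\,n\in\N_0\bigr)$ is a homogeneous Markov chain with one-step kernel $P((s,v),A)=Q_v((s+S_1,K_1)\in A)$, as asserted. I expect the main obstacle to be the freezing step in the one-step identity: one must argue carefully that conditioning on $\cB_m$ reduces, on each $\{K_m=v\}$, to integrating the fixed index-$v$ process $Z^m_v$ over the independent block $\Xi^m$ with the $\cB_m$-data held fixed, which is precisely where the finiteness of $V_c$, the product structure of $(\Xi,\cC,Q_\xi)$, and the distributional identity $Z^m_v\stackrel{d}{=}Z^0|_{Q^0_v}$ all enter.
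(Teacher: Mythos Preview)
Your proof is correct and follows essentially the same approach as the paper: both establish the one-step identity by decomposing on $\{K_m=v\}$ and integrating out the independent block $\Xi^m$ via the product structure and the distributional identity $Z^m_v\stackrel{d}{=}Z^0|_{Q^0_v}$, then pass to the general case by iterating the tower property. The only cosmetic difference is that the paper first extracts the semigroup relation $U_n=U_{n-1}\comp U_1$ (and then the full law via Fubini) before iterating the conditioning from $\cB_{n-1}$ down to $\cB_m$, whereas you package everything into a single induction on $d=n-m$ and recover $U_1U_{d-1}=U_d$ inside the induction; the substance is identical.
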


\begin{proof}
For $m=n$ formula~\eqref{eq3iii} is trivial. Consider the case when $n\ge 2$, $m=n-1$.
Let $\gL\in\cB_{n-1}$, $v\in V_c$, and put $\gL_v = \gL\cap\{K_{n-1}=v\}\in\cB_{n-1}$.
From the construction of $S_n$ and $K_n$
\begin{align*}
    E_\xi\bigl(&g(s+S_n,K_n); \gL_v\bigr)\\[1ex]
        &\hspace{-1.5em}= \int_{\Xil[n-2]} 1_{\gL_v}
            \Bigl(\int_{\Xiu[n-1]}g\bigl(s+S_{n-1}+\tau^{n-1}_v,
                    \gk\bigl(Z^{n-1}_v(\tau^{n-1}_v)\bigr)\bigr)\,d\Qu[n-1]\Bigr)\,d\Ql[n-2]_\xi\\[1ex]
        &\hspace{-1.5em}= \int_{\Xil[n-2]} 1_{\gL_v}
            \Bigl(\int_{\Xi^0}g\bigl(s+u+\tau^0,
                    \gk\bigl(Z^0(\tau^0)\bigr)\bigr)\,dQ^0_v\Bigr)\Eval_{u=S_{n-1}}\,d\Ql[n-2]_\xi\\[1ex]
        &\hspace{-1.5em}=E_\xi\Bigl(E_v\bigr(g(s+u+S_1,K_1)\bigr)\eval_{u=S_{n-1}};\gL_v\Bigr)\\[1ex]
        &\hspace{-1.5em}=E_\xi\Bigl(E_{K_{n-1}}\bigr(g(s+u+S_1,K_1)\bigr)\eval_{u=S_{n-1}};\gL_v\Bigr)\\[1ex]
        &\hspace{-1.5em}=E_\xi\Bigl( (U_1 g)(s+S_{n-1},K_{n-1});\gL_v\Bigr),
\end{align*}
where in the last step we used definition~\eqref{eq3ii}. If in the preceding calculation
we replace the event $\{K_{n-1}=v\}$ by $\{K_{n-1}=\gD\}$, we get zero on both sides because
$K_{n-1}=\gD$ implies $K_n=\gD$ (see subsection~\ref{ssect3ii}). Thus summation over
$v\in V_c$ gives
\begin{equation*}
    E_\xi\bigl(g(s+S_n,K_n);\gL\bigr)
        = E_\xi\bigl((U_1 g)(s+S_{n-1}, K_{n-1});\gL\bigr),
\end{equation*}
and equation~\eqref{eq3iii} is proved for the case where $n\ge 2$ and $m=n-1$. As a
consequence we get
\begin{align*}
    (U_n g)(s,v)
        &= E_v\bigl(E_v\bigl(g(s+S_n,K_n)\cond \cB_{n-1}\bigr)\bigr)\\
        &= E_v\bigl((U_1 g)(s+S_{n-1},K_{n-1})\bigr)\\
        &= \bigl(U_{n-1}\comp U_1 g\bigr)(s,v).
\end{align*}
Now the general semigroup relation $U_{n+m}=U_n\comp U_m$, $m$, $n\in\N_0$, follows
by an application of Fubini's theorem.

Finally we show formula~\eqref{eq3iii} in the general case:
\begin{align*}
    E_\xi\bigl(g(s+S_n,K_n)&\cond \cB_m\bigr)\\
        &= E_\xi\bigl(E_\xi\bigl(g(s+S_n,K_n)\cond \cB_{n-1}\bigr)\cond \cB_m\bigr)\\
        &= E_\xi\bigl((U_1 g)(s+S_{n-1},K_{n-1})\cond\cB_m\bigr)\\
        &=\quad \dots \quad=\\
        &= E_\xi\bigl((U_1\comp\dotsb\comp U_1 g)(s+S_m,K_m)\cond\cB_m\bigr)\\
\end{align*}
where the multiple composition in the last expression involves $n-m$ operators $U_1$.
The semigroup property of $(U_n,\,n\in\N_0)$ implies formula~\eqref{eq3iii}.
\end{proof}

It will be useful to introduce some additional notation. For $r\in\N$, let
$\Rrp$ denote the set of all increasingly ordered $r$--tuples with entries in
$\R_+$. If $u\in\Rrp$ and $s\in\R$ we set $u+s=(u_1+s,\dotsc, u_r+s)\in\R^r$. $u<s$ means
that $u_i<s$ for all $i=1$, \dots, $r$ or equivalently $u_r<s$. The relations
$u>s$, $u\le s$, and $u\ge s$ are defined analogously. In particular,
when $s\le u$, then $u-s\in\Rrp$. For $r$, $q\in\N$, and $u\in\Rrp$, $w\in\Rrp[q]$
with $u\le w_1$, define
\begin{equation*}
    (u,w) = (u_1,\dotsc, u_r,w_1,\dotsc,w_q)\in\Rrp[r+q].
\end{equation*}
Furthermore, $Y(u)$ stands for $(Y(u_1),\dotsc, Y(u_r))$, and similarly for $Z^n_v(u)$,
$n\in\N_0$, $v\in V_c$.

In the sequel we shall consider random variables $W_m(h,g,u)$ of the following form
\begin{equation}    \label{eq3iv}
    W_m(h,g,u) = h\bigl(Y(u)\bigr)\,\chi_m(u)\,g(S_{m+1},K_{m+1}),
\end{equation}
where $m\in\N$, $h$ belongs to $\bcBG$,  $r\in \N$, $g$ to $\bRV$, and $u\in\Rrp$. Here
we have set
\begin{equation*}
    \chi_m(u) = 1_{\{S_m\le u <S_{m+1}\}}.
\end{equation*}
For $s\ge 0$ with $s\le u$ define
\begin{equation}    \label{eq3v}
    W_{m,s}(h,g,u)
        = h\bigl(Y(u-s)\bigr)\,\chi_m(u-s)\,g(s+S_{m+1},K_{m+1}),
\end{equation}
so that $W_{m,s=0}(h,g,u)=W_m(h,g,u)$. Moreover set
\begin{equation}    \label{eq3vi}
    R_m(h,g,u)(s,v) = E_v\bigl(W_{m,s}(h,g,u)\bigr),\qquad s\in\R_+,\,s\le u,\,v\in V_c.
\end{equation}
For the following it will be convenient to let $W_{m,s}(h,g,u)$ and $R_m(h,g,u)(s,v)$,
$v\in V_c$, be defined for all $s\in\R_+$. To this end we make the convention that
$Y(t)=\gD$ for all $t<0$. Then by $W_{m,s}(h,1,u)=W_m(h,1,u-s)$ the following formula
\begin{equation}    \label{eq3via}
    R_m(h,1,u)(s+t,v) = R_m(h,1,u-s)(t,v)
\end{equation}
holds for all $m\in\N$, $h\in B(\cG^r)$, $r\in\N$, $u\in\Rrp$, $s$, $t\in\R_+$,
$v\in V_c$.

Suppose that $r$, $q\in\N$, and that $h\in\bcBG$, $f\in\bcBG[q]$. Then $h\otimes f$
denotes the function in $B(\cG^{r+q})$ given by
\begin{equation}    \label{eq3vib}
\begin{split}
    h\otimes f&(\eta_1,\dotsc,\eta_{r+q})\\
        &= h(\eta_1,\dotsc,\eta_r)\,f(\eta_{r+1},\dotsc,\eta_{r+q}),\qquad
                    (\eta_1,\dotsc,\eta_{r+q})\in\cG^{r+q}.
\end{split}
\end{equation}

\begin{lemma}   \label{lem3iii}
Suppose that $r\in\N$, $u\in\Rrp$, and that $h\in\bcBG$.
\begin{enum_a}
    \item If $q\in\N$, $w\in \Rrp[q]$ with $u_r\le w$, and
            $f\in\bcBG[q]$, then
            \begin{subequations}     \label{eq3vii}
            \begin{equation}    \label{eq3viia}
                R_0\bigl(h\otimes f, 1, (u,w)\bigr)= R_0\big(M(f,w,u_r)h, 1,u\bigr)
            \end{equation}
            holds true, where $M(f,w,s)h\in B(\cG^r)$ is given by
            \begin{equation}    \label{eq3viib}
                \bigl(M(f,w,s)h\bigr)(\eta) = h(\eta)\,E_{\eta_r}\bigl(W_{0,s}(f,1,w)\bigr),
                                \quad \eta\in\cG^r,\,0\le s\le w.
            \end{equation}
            \end{subequations}
    \item If $g\in\bRV$, then
            \begin{subequations}     \label{eq3viii}
            \begin{equation}    \label{eq3viiia}
                R_0\bigl(h,g,u\bigr) = R_0\bigl(N(g,u_r)h,1,u\bigr)
            \end{equation}
            holds, where $N(g,s)h\in B(\cG^r)$ is given by
            \begin{equation}    \label{eq3viiib}
                \bigl(N(g,s) h\bigr)(\eta)= h(\eta)\,E_{\eta_r}\bigl(g(s+S_1,K_1)\bigr),
                                            \qquad \eta\in\cG^r,\,s\ge 0.
            \end{equation}
            \end{subequations}
\end{enum_a}
\end{lemma}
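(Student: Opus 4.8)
The plan is to derive both identities from a \emph{single} application of the ordinary Markov property of the underlying process $Z^0$ at the \emph{deterministic} time $t_r:=u_r-s$. The decisive observation is that on the crossover-free event $\{u_r-s<S_1\}$ the path of $Y$ coincides with that of the zeroth excursion $Z^0$ on all of $[0,S_1)$, and that the crossover data $(S_1,K_1)=(\tau^0,\gk(Z^0(\tau^0)))$ is a functional of $Z^0$ alone. Thus every observable entering $R_0(\cdot,\cdot,\cdot)(s,v)$ --- the values $Y(u-s)$, the indicator $\chi_0$, and the pair $(S_1,K_1)$ --- lives on the single Markov process $Z^0$ started at $v$, and may be split with respect to the (zeroth-excursion) filtration $\cF^0$. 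Throughout one may assume $s\le u_1$, since otherwise $\chi_0(u-s)=1_{\{s\le u_1\}}1_{\{u_r-s<S_1\}}$ vanishes on both sides; in particular $t_r\ge 0$.

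For part (b) I would unfold $R_0(h,g,u)(s,v)=E_v\bigl(h(Y(u-s))\,\chi_0(u-s)\,g(s+S_1,K_1)\bigr)$ and condition on $\cF^0_{t_r}$. The factors $h(Y(u-s))$ and $\chi_0(u-s)$ are $\cF^0_{t_r}$-measurable on $\{t_r<S_1\}$, while $g(s+S_1,K_1)$ is the ``future'' factor: on $\{t_r<S_1\}$ one has $S_1=t_r+\tau^0\comp\vt_{t_r}$ and $K_1=\gk(Z^0(\tau^0))\comp\vt_{t_r}$, whence the Markov property yields
\begin{equation*}
    E_v\bigl(g(s+S_1,K_1)\cond\cF^0_{t_r}\bigr)=E_{Z^0(t_r)}\bigl(g(u_r+S_1,K_1)\bigr)\qquad\text{on }\{t_r<S_1\},
\end{equation*}
using $s+t_r=u_r$. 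Setting $\eta_r:=Z^0(t_r)=Y(u_r-s)$, this is the factor $E_{\eta_r}(g(u_r+S_1,K_1))$ of~\eqref{eq3viiib} (the identification $E_{Z^0(t_r)}=E_{\eta_r}$ being the point flagged below); pulling the past factors back in and recognising $(N(g,u_r)h)(Y(u-s))=h(Y(u-s))\,E_{\eta_r}(g(u_r+S_1,K_1))$ reassembles the right-hand side as $R_0(N(g,u_r)h,1,u)(s,v)$. The degenerate cases ($S_1=+\infty$ or death before an observation time) need no separate treatment: they are killed by the conventions $g(+\infty,\gD)=0$ and $Y(t)=\gD$ for $t<0$.

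Part (a) is the same computation with the future factor $g(s+S_1,K_1)$ replaced by $f(Y(w-s))\,1_{\{w_q-s<S_1\}}$, the genuinely ``future'' part of $\chi_0((u,w)-s)$. Here the hypothesis $u_r\le w$ is exactly what guarantees that all observation times $w_j-s$ lie beyond $t_r$ and that $\{w_q-s<S_1\}\subseteq\{u_r-s<S_1\}$, so the indicator is indeed $\vt_{t_r}$-measurable. Shifting by $\vt_{t_r}$ and using $w_j-s-t_r=w_j-u_r$, the Markov property converts the conditional expectation of the future factor into $E_{Z^0(t_r)}\bigl(f(Y(w-u_r))\,\chi_0(w-u_r)\bigr)=E_{\eta_r}\bigl(W_{0,u_r}(f,1,w)\bigr)$ on $\{t_r<S_1\}$ (the lower bound $w-u_r\ge 0$ in $\chi_0$ being automatic from $u_r\le w$). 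This is precisely the factor in~\eqref{eq3viib}, and reassembling as before gives $R_0(M(f,w,u_r)h,1,u)(s,v)$.

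I expect the only genuinely conceptual point --- and the main obstacle --- to be twofold bookkeeping. First, one must justify pulling the ``past'' factor $h(Y(u-s))$ outside the conditional expectation even though $Y$ and $Z^0$ differ off $\{t_r<S_1\}$; this is legitimate because the relevant product is supported on $\{t_r<S_1\}$, where $Y\equiv Z^0$ and hence $h(Y(u-s))=h(Z^0(u-s))$ is $\cF^0_{t_r}$-measurable. Second, one must note that restarting $Z^0$ at $Z^0(t_r)$ agrees, \emph{for the functionals at hand}, with restarting the full construction $Y$ at $\eta_r$: this holds because $W_{0,u_r}(f,1,w)$ and $g(u_r+S_1,K_1)$ are measurable with respect to the zeroth excursion only, so that their $E_{\eta_r}$-expectation under $Q_{\eta_r}$ coincides with the $Z^0$-expectation delivered by the Markov property. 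Once these two points are in place, both identities follow by the routine reassembly indicated above.
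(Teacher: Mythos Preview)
Your proposal is correct and follows essentially the same route as the paper: both proofs apply the simple Markov property of $Z^0$ at the deterministic time $u_r-s$, using the shift relation $\tau^0=u_r-s+\tau^0\comp\vt_{u_r-s}$ on $\{u_r-s<\tau^0\}$ and the identification of $Y$ with $Z^0$ on the crossover-free event. The two bookkeeping points you flag (that $h(Y(u-s))$ is $\cF^0_{t_r}$-measurable on $\{t_r<S_1\}$, and that $E_{Z^0(t_r)}$ agrees with $E_{\eta_r}$ for zeroth-excursion functionals) are precisely the implicit passages in the paper's computation where it writes $Z^0$ in place of $Y$ and then switches from $E_{Z^0(u_r-s)}$ to $E_{Y(u_r-s)}$.
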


\begin{proof}
Both statements follow from the Markov property of the Brownian motion $Z^0$ on
$\cG_0$ underlying the construction of $Y$. We only prove statement~(b), the
proof of~(a) is similar and therefore omitted. Using the definition of $R_0$ and the
construction of $Y$, we compute for $s\in\R_+$, $v\in V_c$, as follows:
\begin{align*}
    R_0\bigl(h,g,u\bigr)(s,v)
        &= E_v\bigl(h\bigl(Y(u-s)\bigr)\,\chi_0(u-s)\,g(s+S_1,K_1)\bigr)\\
        &= E_v\bigl(h\bigl(Z^0(u-s)\bigr)\,1_{\{0\le u-s <\tau^0\}}\,
                g\bigl(s+\tau^0,\gk(Z^0(\tau^0))\bigr)\bigr).
\end{align*}
Recall that $\cF^0$ denotes the natural filtration of $Z^0$, and $\vt$ is a family
of shift operators for $Z^0$. It follows from the definition of the stopping time
$\tau^0$ and the path properties of $Z^0$, that on $\{\tau^0\ge u_r-s\}$ the
relation $\tau^0 = u_r-s + \tau^0\comp\vt_{u_r-s}$ holds true. Moreover, it is
easy to check that on this event we have $Z^0(\tau^0)=Z^0(\tau^0)\comp\vt_{u_r-s}$.
Therefore
\begin{align*}
    R_0\bigl(h,g,u\bigr)(s,v)
        &= E_v\Bigl(h\bigl(Z^0(u-s)\bigr)\,1_{\{0\le u-s <\tau^0\}}\\
        &\hspace{2em} \times E_v\bigl(g(u_r+\tau^0,\gk(Z^0(\tau^0)))
                            \comp\vt_{u_r-s}\cond \cF^0_{u_r-s}\bigr)\Bigr)\\
        &= E_v\Bigl(h\bigl(Z^0(u-s)\bigr)\,1_{\{0\le u-s <\tau^0\}}\\
        &\hspace{2em} \times E_{Z^0(u_r-s)}\bigl(g(u_r+\tau^0,\gk(Z^0(\tau^0)))\bigr)\Bigr)\\
        &= E_v\Bigl(h\bigl(Y(u-s)\bigr)\,\chi_0(u-s)\,
            E_{Y(u_r-s)}\bigl(g(u_r+S_1,K_1)\bigr)\Bigr)\\
        &= R_0\bigl(N(g,u_r)h,1,u\bigr)(s,v),
\end{align*}
and the proof is concluded.
\end{proof}

\begin{lemma}   \label{lem3iv}
For all $m$, $r\in\N$, $h\in\bcBG$, $g\in\bRV$, $u\in\Rrp$, $\xi\in\cG$, the formula
\begin{equation}    \label{eq3ix}
    E_\xi\bigl(W_m(h,g,u)\cond \cB_m\bigr)
        = R_0(h,g,u)(S_m,K_m)
\end{equation}
holds $Q_\xi$--a.s.
\end{lemma}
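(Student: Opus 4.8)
The plan is to establish \eqref{eq3ix} by testing against an arbitrary event $\gL\in\cB_m$, exactly in the style of the proof of lemma~\ref{lem3ii}. The right-hand side $R_0(h,g,u)(S_m,K_m)$ is $\cB_m$--measurable, since $(S_m,K_m)$ is, and the left-hand side is a conditional expectation given $\cB_m$; hence it suffices to verify
\begin{equation*}
    E_\xi\bigl(W_m(h,g,u);\gL\bigr) = E_\xi\bigl(R_0(h,g,u)(S_m,K_m);\gL\bigr)
\end{equation*}
for every $\gL\in\cB_m$. Writing $\gL_v = \gL\cap\{K_m=v\}$ for $v\in V_c$ and treating $\{K_m=\gD\}$ separately, I reduce to computing $E_\xi(W_m(h,g,u);\gL_v)$.

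On $\{K_m=\gD\}=\{S_m=+\infty\}$ both sides vanish: there $\chi_m(u)=0$ for finite $u$, so $W_m(h,g,u)=0$, while $R_0(h,g,u)(+\infty,\gD)=0$ by the standing convention $g(+\infty,\,\cdot\,)=g(\,\cdot\,,\gD)=0$. On $\{K_m=v\}$ with $v\in V_c$ the time $S_m$ is finite, and by the pathwise construction of $Y$ on $[S_m,S_{m+1})$, together with $S_{m+1}=S_m+\tau^m_{K_m}$ and $K_{m+1}=\gk\bigl(Z^m_{K_m}(\tau^m_{K_m})\bigr)$, one has on $\gL_v$
\begin{equation*}
    W_m(h,g,u) = h\bigl(Z^m_v(u-S_m)\bigr)\,1_{\{0\le u-S_m<\tau^m_v\}}\,
        g\bigl(S_m+\tau^m_v,\gk(Z^m_v(\tau^m_v))\bigr),
\end{equation*}
where I have used that $\chi_m(u)=1_{\{0\le u-S_m<\tau^m_v\}}$ because $S_{m+1}=S_m+\tau^m_v$.

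Next I exploit the product decomposition $(\Xi,\cC,Q_\xi)=(\Xil[m-1],\cCl[m-1],\Ql[m-1]_\xi)\otimes(\Xiu[m],\cCu[m],\Qu[m])$: both $\gL_v$ and the pair $(S_m,K_m)$ depend only on the factors $\Xi^0,\dotsc,\Xi^{m-1}$, whereas the displayed expression for $W_m$ depends only on the factor $\Xi^m$ through $Z^m_v$ and $\tau^m_v$. An application of Fubini's theorem therefore yields
\begin{equation*}
    E_\xi\bigl(W_m(h,g,u);\gL_v\bigr)
        = \int_{\Xil[m-1]} 1_{\gL_v}\Bigl(\int_{\Xi^m}
            h\bigl(Z^m_v(u-S_m)\bigr)\,1_{\{0\le u-S_m<\tau^m_v\}}\,
                g\bigl(S_m+\tau^m_v,\gk(Z^m_v(\tau^m_v))\bigr)\,dQ^m\Bigr)\,d\Ql[m-1]_\xi,
\end{equation*}
with $S_m$ frozen at its value in the inner integral (the remaining factors integrate out since $W_m$ depends only on $\Xi^m$). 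Because $(\Xi^m,\cC^m,Q^m,Z^m,\tau^m)$ is a copy of $(\Xi^1,\cC^1,Q^1,Z^1,\tau^1)$ and $Z^1_v$ is by construction equivalent to $Z^0$ under $Q^0_v$, the inner integral is precisely $R_0(h,g,u)(S_m,v)$: this is seen by unwinding the definition~\eqref{eq3vi} of $R_0$ and recalling that under $Q_v$ the process $Y$ starts at $v$ and coincides with $Z^0$ up to $S_1=\tau^0$. Substituting back, using $K_m=v$ on $\gL_v$, and summing over $v\in V_c$ gives the desired identity for every $\gL\in\cB_m$.

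The one delicate point is the freezing of the $\cB_m$--measurable random time $S_m$ inside the expectation over the independent factor $\Xi^m$; this is just the substitution principle for conditional expectations under independence, made rigorous by the Fubini decomposition above exactly as in lemma~\ref{lem3ii}. The convention $Y(t)=\gD$ for $t<0$ (so that $R_0(h,g,u)(s,v)=0$ whenever $s>u_1$) ensures that the identification of the inner integral with $R_0$ is valid for all values of $S_m$, not only on $\{\chi_m(u)=1\}$; everything else is routine bookkeeping with the construction of $Y$.
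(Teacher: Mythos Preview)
Your proof is correct and follows essentially the same route as the paper: decompose $\gL\in\cB_m$ over the events $\{K_m=v\}$, $v\in V_c$, and $\{K_m=\gD\}$; on $\gL_v$ rewrite $W_m(h,g,u)$ in terms of $Z^m_v$, $\tau^m_v$ and the frozen value $s=S_m$; apply Fubini with respect to the product structure $(\Xil[m-1],\Ql[m-1]_\xi)\otimes(\Xiu[m],\Qu[m])$; and identify the inner integral with $R_0(h,g,u)(s,v)$ via the equivalence of $Z^m_v$ under $\mu^m_v$ with $Z^0$ under $Q^0_v$. Your additional remarks on $\cB_m$--measurability of the right-hand side and on the freezing of $S_m$ are accurate and simply make explicit what the paper leaves implicit.
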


\begin{proof}
Observe that both side of equation~\eqref{eq3ix} vanish on the set $\{K_m=\gD\}$.
Let $\gL\in\cB_m$, $v\in V_c$, and set $\gL_v=\gL\cap\{K_m=v\}\in\cB_m$. Then
\begin{align*}
    E_\xi\bigl(&W_m(h,g,u);\gL_v\bigr)\\
        &= \int_{\Xil[m-1]} 1_{\gL_v}\, \Bigl(\int_{\Xiu[m]} h\bigl(Y(u)\bigr)\,
                    1_{\{S_m\le u<S_{m+1}\}}\\
        &\hspace{7em} \times g(S_{m+1},K_{m+1})\,d\Qu[m]\Bigr)\,d\Ql[m-1]_\xi\\
        &= \int_{\Xil[m-1]} 1_{\gL_v}\, \Bigl(\int_{\Xi^m} h\bigl(Z^m_v(u-s)\bigr)\,
                    1_{\{0\le u-s < \tau^m_v\}}\\
        &\hspace{7em} \times g\bigl(s+\tau^m_v,\gk(Z^m_v(\tau^m_v))\bigr)\,dQ^m\Bigr)
                \Eval_{s=S_m}\,d\Ql[m-1]_\xi\\
        &= \int_{\Xil[m-1]} 1_{\gL_v}\, \Bigl(\int_{\Xi^0} h\bigl(Z^0(u-s)\bigr)\,
                    1_{\{0\le u-s < \tau^0\}}\\
        &\hspace{7em} \times g\bigl(s+\tau^0,\gk(Z^0(\tau^0))\bigr)\,dQ^0_v\Bigr)
                \Eval_{s=S_m}\,d\Ql[m-1]_\xi\\
        &= E_\xi\Bigl(E_v\bigl(h\bigl(Y(u-s)\bigr)\,\chi_0(u-s)\,g(s+S_1,K_1)\bigr)
            \eval_{s=S_m};\,\gL_v\Bigr)\\
        &= E_\xi\Bigl(E_{K_m}\bigl(h\bigl(Y(u-s)\bigr)\,\chi_0(u-s)\,g(s+S_1,K_1)\bigr)
            \eval_{s=S_m};\,\gL_v\Bigr)\\
        &= E_\xi\bigl(R_0(h,g,u)(S_m,K_m);\,\gL_v\bigr).
\end{align*}
Summation over $v\in V_c$ finishes the proof.
\end{proof}

\begin{lemma}   \label{lem3v}
For all $m$, $r\in\N$, $u\in\Rrp$, $h\in\bcBG$, $g\in\bRV$,
\begin{equation}    \label{eq3x}
    R_m(h,g,u)(s,v) = \bigl(U_m R_0(h,g,u)\bigr)(s,v),\qquad s\in\R_+,\,v\in V_c,
\end{equation}
holds.
\end{lemma}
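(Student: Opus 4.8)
The plan is to derive the identity by conditioning on $\cB_m$ and then reading it off from the definition of $U_m$, with Lemma~\ref{lem3iv} as the sole nontrivial input. The central step is to establish the ``shifted'' analogue of Lemma~\ref{lem3iv}, namely
\[
    E_\xi\bigl(W_{m,s}(h,g,u)\cond\cB_m\bigr) = R_0(h,g,u)(s+S_m,K_m),\qquad Q_\xi\text{--a.s.},
\]
for every $\xi\in\cG$ and every $s\in\R_+$. Granting this, the lemma is immediate: specializing $\xi$ to a vertex $v\in V_c$, applying the tower property, and recalling that $R_0(h,g,u)$ is bounded and measurable in $(s,v)$, hence lies in $\bRV$ so that $U_m$ may act on it, one obtains from \eqref{eq3vi} and the definition \eqref{eq3ii} of $U_m$ that
\[
    R_m(h,g,u)(s,v) = E_v\bigl(W_{m,s}(h,g,u)\bigr)
        = E_v\bigl(R_0(h,g,u)(s+S_m,K_m)\bigr)
        = \bigl(U_m R_0(h,g,u)\bigr)(s,v).
\]

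To obtain the shifted analogue without repeating the computation in the proof of Lemma~\ref{lem3iv}, I would exploit the elementary factorization
\[
    W_{m,s}(h,g,u) = W_m\bigl(h,g_{(s)},u-s\bigr),\qquad g_{(s)}(t,w):=g(s+t,w),
\]
which is read directly off the definitions \eqref{eq3iv} and \eqref{eq3v}. For $s\le u$ the argument $u-s$ again lies in $\Rrp$ (as noted in the text) and $g_{(s)}\in\bRV$, so Lemma~\ref{lem3iv}, applied with $g$ and $u$ replaced by $g_{(s)}$ and $u-s$, gives
\[
    E_\xi\bigl(W_{m,s}(h,g,u)\cond\cB_m\bigr) = R_0\bigl(h,g_{(s)},u-s\bigr)(S_m,K_m).
\]
It then remains to recognize the right-hand side: straight from the definition \eqref{eq3vi} of $R_0$ one verifies the shift--covariance
\[
    R_0\bigl(h,g_{(s)},u-s\bigr)(s',v) = R_0(h,g,u)(s+s',v),
\]
which is exactly the generalization of \eqref{eq3via} to arbitrary $g$; evaluating at $(s',v)=(S_m,K_m)$, with both sides vanishing on $\{K_m=\gD\}$, yields the claim. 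The remaining case $s>u$ is trivial: there $\chi_m(u-s)$ forces $W_{m,s}$ to vanish, while the convention $Y(t)=\gD$ for $t<0$ makes $R_0(h,g,u)(s+S_m,K_m)$ vanish as well.

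I expect the only real care to be in the bookkeeping of the time shifts, namely checking that the two displayed algebraic identities (the factorization $W_{m,s}=W_m(h,g_{(s)},u-s)$ and the shift--covariance of $R_0$) hold verbatim under the extension conventions for $g$ and $Y$, and that the reduction to Lemma~\ref{lem3iv} is legitimate for all $s$ — which is precisely where the convention $Y(t)=\gD$ for $t<0$ is used. No new probabilistic content enters; in particular, no fresh appeal to the Markov property of $Z^0$ is needed, since all the probabilistic work is already encapsulated in Lemma~\ref{lem3iv}. Beyond the shift bookkeeping, the argument is a formal manipulation of the already-established identities together with the defining formula \eqref{eq3ii} for $U_m$.
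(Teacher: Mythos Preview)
Your proposal is correct and is essentially the paper's own proof, run in the opposite direction: the paper starts from $(U_m R_0(h,g,u))(s,v)$, applies the shift-covariance $R_0(h,g,u)(s+S_m,K_m)=R_0(h,g(s+\cdot),u-s)(S_m,K_m)$, then Lemma~\ref{lem3iv}, then the factorization $W_m(h,g(s+\cdot),u-s)=W_{m,s}(h,g,u)$, whereas you begin at $R_m$ and reverse these steps. The ingredients and the level of care needed (the shift bookkeeping and the convention $Y(t)=\gD$ for $t<0$) are identical.
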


\begin{proof}
By definition of $U_m$
\begin{equation*}
    \bigl(U_m R_0(h,g,u)\bigr)(s,v)
        = E_v\bigl(R_0(h,g,u)(s+S_m,K_m)\bigr).
\end{equation*}
With formula~\eqref{eq3via} and lemma~\ref{lem3iv} we find
\begin{align*}
    \bigl(U_m R_0(h,g,u)\bigr)(s,v)
        &= E_v\bigl(R_0(h,g(s+\,\cdot\,),u-s)(S_m,K_m)\bigr)\\
        &= E_v\bigl(E_v\bigl(W_m(h,g(s+\,\cdot\,),u-s)\cond \cB_m\bigr)\bigr)\\
        &= E_v\bigl(W_m(h,g(s+\,\cdot\,),u-s)\bigr)\\
        &= E_v\bigl(W_{m,s}(h,g,u)\bigr)\\
        &= R_m(h,g,u)(s,v).\qedhere
\end{align*}
\end{proof}

\begin{corollary}   \label{cor2vi}
For all $m$, $n$, $r\in\N$, $u\in\Rrp$, $h\in\bcBG$, $g\in\bRV$,
\begin{equation}    \label{eq3xi}
    U_n R_m(h,g,u) = R_{n+m}(h,g,u).
\end{equation}
is valid.
\end{corollary}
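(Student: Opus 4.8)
The plan is to observe that this corollary is a purely formal consequence of the two structural facts already established: the identity $R_m(h,g,u) = U_m R_0(h,g,u)$ of lemma~\ref{lem3v}, and the semigroup property $U_n\comp U_m = U_{n+m}$ on $\bRV$ from lemma~\ref{lem3ii}(b). No further probabilistic input is needed; in particular, we need not reopen the construction of $Y$ or the chain $((S_n,K_n),\,n\in\N)$.

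Concretely, I would proceed in two moves. First, fix $h\in\bcBG$, $g\in\bRV$, $u\in\Rrp$, and note that $R_0(h,g,u)$ is a well-defined element of $\bRV$, i.e.\ a (deterministic) function of $(s,v)\in\R_+\times V_c$, so the operators $U_n$ may legitimately be applied to it. Lemma~\ref{lem3v} then reads $R_m(h,g,u) = U_m R_0(h,g,u)$ as an identity in $\bRV$, and likewise, with $n+m$ in place of $m$, $R_{n+m}(h,g,u) = U_{n+m} R_0(h,g,u)$; both indices $m$ and $n+m$ lie in $\N$, so lemma~\ref{lem3v} applies to each. Combining these with the semigroup relation gives
\begin{equation*}
    U_n R_m(h,g,u) = U_n\bigl(U_m R_0(h,g,u)\bigr) = (U_n\comp U_m)\,R_0(h,g,u)
        = U_{n+m} R_0(h,g,u) = R_{n+m}(h,g,u),
\end{equation*}
where the third equality uses lemma~\ref{lem3ii}(b) and the last uses lemma~\ref{lem3v} again. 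This is precisely~\eqref{eq3xi}.

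There is essentially no obstacle here: all the genuine work—encoding the Markov property of $Z^0$ and of the chain through the operators $U_n$—was carried out in lemmas~\ref{lem3iv} and~\ref{lem3v} and in the semigroup statement of lemma~\ref{lem3ii}. The only points worth keeping in mind are that every equality above is an equality of deterministic functions in $\bRV$, so no almost-sure qualifications need to be tracked, and that the composition $U_n\comp U_m$ is the one furnished by lemma~\ref{lem3ii}(b) acting on the fixed element $R_0(h,g,u)$ of $\bRV$.
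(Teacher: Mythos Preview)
Your proposal is correct and follows precisely the paper's own argument: apply lemma~\ref{lem3v} to write $R_m(h,g,u)=U_m R_0(h,g,u)$, use the semigroup relation $U_n\comp U_m=U_{n+m}$ from lemma~\ref{lem3ii}(b), and then apply lemma~\ref{lem3v} once more with index $n+m$. The paper's proof is the same one-line chain of equalities, so there is nothing to add.
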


\begin{proof} By lemma~\ref{lem3v} and lemma~\ref{lem3ii}, statement~(b), we obtain
\begin{equation*}
    U_n R_m(h,g,u) = U_n\comp U_m R_0(h,g,u) = U_{n+m} R_0(h,g,u) = R_{n+m}(h,g,u).\qedhere
\end{equation*}
\end{proof}

\begin{lemma}   \label{lem3vii}
For all $m$, $n\in\N$, $m\le n$, $r\in\N$, $u\in\Rrp$, $h\in\bcBG$, $g\in\bRV$, $\xi\in\cG$,
the following formula holds true:
\begin{equation}    \label{eq3xii}
    E_\xi\bigl(W_n(h,g,u)\cond \cB_m\bigr) = R_{n-m}(h,g,u)(S_m,K_m).
\end{equation}
\end{lemma}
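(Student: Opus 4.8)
The plan is to reduce the general statement to the diagonal case $m=n$ --- which is exactly Lemma~\ref{lem3iv} --- and then to transport the resulting conditional expectation down the filtration $\cB$ by means of the Markov-chain semigroup $(U_n)$. First I would invoke the tower property: since $\cB$ is a filtration, $\cB_m\subseteq\cB_n$ for $m\le n$, and hence
\begin{equation*}
    E_\xi\bigl(W_n(h,g,u)\cond\cB_m\bigr)
        = E_\xi\bigl(E_\xi\bigl(W_n(h,g,u)\cond\cB_n\bigr)\cond\cB_m\bigr).
\end{equation*}
Applying Lemma~\ref{lem3iv} with $m$ there replaced by $n$ evaluates the inner conditional expectation as $R_0(h,g,u)(S_n,K_n)$. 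Writing $G:=R_0(h,g,u)$, the remaining task is to compute $E_\xi\bigl(G(S_n,K_n)\cond\cB_m\bigr)$.

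To this end I would first record that $G=R_0(h,g,u)$ lies in $\bRV$: it is bounded by $\|h\|_\infty\|g\|_\infty$, and $s\mapsto E_v\bigl(W_{0,s}(h,g,u)\bigr)$ is measurable (by Fubini, using that $V_c$ is finite). This membership is precisely what allows Lemma~\ref{lem3ii}(a) to be applied to $G$; taking the free time parameter there to be $s=0$ gives
\begin{equation*}
    E_\xi\bigl(G(S_n,K_n)\cond\cB_m\bigr) = (U_{n-m}G)(S_m,K_m).
\end{equation*}
Finally Lemma~\ref{lem3v} identifies $U_{n-m}R_0(h,g,u)=R_{n-m}(h,g,u)$ --- the case $n-m=0$ being immediate from $U_0=\text{id}$ --- and concatenating the three displays produces the claimed formula~\eqref{eq3xii}.

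The proof is therefore a direct chain of previously established results, and I do not anticipate a genuine obstacle. The single point worth verifying explicitly is the membership $G\in\bRV$, since Lemma~\ref{lem3ii}(a) is stated only for functions of that class; everything else is bookkeeping. I would also make explicit that the inclusion $\cB_m\subseteq\cB_n$ is what legitimizes the one use of the tower property in the correct direction, and that the specialization $s=0$ is permitted because Lemma~\ref{lem3ii}(a) holds for all $s\ge0$.
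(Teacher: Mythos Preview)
Your proposal is correct and follows essentially the same route as the paper's proof: tower property to condition first on $\cB_n$, Lemma~\ref{lem3iv} for the inner expectation, Lemma~\ref{lem3ii}(a) with $s=0$ to push the resulting function of $(S_n,K_n)$ down to $\cB_m$, and Lemma~\ref{lem3v} to identify $U_{n-m}R_0$ with $R_{n-m}$. Your explicit check that $G=R_0(h,g,u)\in\bRV$ is a worthwhile addition that the paper leaves implicit.
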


\begin{proof}
Apply lemma~\ref{lem3iv} to compute as follows
\begin{align*}
    E_\xi\bigl(W_n(h,g,u)\cond \cB_m\bigr)
        &= E_\xi\bigl(E_\xi\bigl(W_n(h,g,u)\cond \cB_n\bigr)\cond \cB_m\bigr)\\
        &= E_\xi\bigl(R_0(h,g,u)(S_n,K_n)\cond \cB_m\bigr)\\
        &= \bigl(U_{n-m} R_0(h,g,u)\bigr)(S_m,K_m),
\end{align*}
where we used lemma~\ref{lem3ii}, formula~\eqref{eq3iii}, in the last step. An application of
lemma~\ref{lem3v} concludes the proof.
\end{proof}

With these preparations, we are ready for the

\begin{proof}[Proof of Proposition~\ref{prop3i}]
Assume that $f\in\bcB(\cG)$, $s$, $t\in\R_+$, and that $\xi\in\cG$. Since
$(S_m,\,m\in\N)$ $Q_\xi$--a.s.\ strictly increases to $+\infty$, and since $S_m$,
$m\in\N$, is an $\cF^Y$--stopping time (cf., lemma~\ref{lemA} in appendix~\ref{appA}),
it suffices to prove that equation~\eqref{eq3i} holds $Q_\xi$--a.s.\ for every
$m\in\N_0$ on $\{S_m\le s <S_{m+1}, Y(s)\ne \gD\}\in\cF^Y_s$. We fix an arbitrary
$m\in\N_0$. Clearly, the family of random variables of the form
\begin{equation*}
	g\bigl(Y(w)\bigr)\,1_{\{0\le w <S_m\}}\,W_m(h,1,u)
\end{equation*}
with $r$, $q\in\N$, $u\in\Rrp$, $u_r=s$, $w\in\Rrp[q]$, $h\in\bcBG$, and
$g\in\bcBG[q]$, generates the $\gs$--algebra $\cF_s^Y\cap\{S_m\le s<S_{m+1}\}$.
Therefore it is sufficient to show that
\begin{equation}   \label{eq3xiii}
\begin{split}
    E_\xi\bigl(g\bigl(Y(w)\bigr)\,&1_{\{0\le w <S_m\}}\,
				W_m(h,1,u)\,f\bigl(Y(s+t)\bigr)\bigr)\\[1ex]
        &= E_\xi\bigl(g\bigl(Y(w)\bigr)\,1_{\{0\le w <S_m\}}\,W_m(h,1,u)\,
            E_{Y(s)}\bigl(f(Y(t))\bigr)\bigr),
\end{split}
\end{equation}
holds for all $r$, $q\in\N$, $u\in\Rrp$ with $u_r=s$, $w\in\Rrp[q]$, $h\in\bcBG$,
$g\in\bcBG[q]$ and $f\in B(\cG)$. (Since the random variables under the
expectation signs of both sides of equation~\eqref{eq3xiii} vanish on the
set $\{Y(s)=\gD\}$, we can henceforth safely ignore the condition $Y(s)\ne \gD$.)
Expand the left hand side of equation~\eqref{eq3xiii} as follows:
\begin{equation}   \label{eq3xiv}
\begin{split}
    E_\xi\bigl(g\bigl(&Y(w)\bigr)\,1_{\{0\le w <S_m\}}\,W_m(h,1,u)\,f\bigl(Y(s+t)\bigr)\bigr)\\[1ex]
        &= \sum_{n=m}^\infty E_\xi\bigl(g\bigl(Y(w)\bigr)\,1_{\{0\le w <S_m\}}\,W_m(h,1,u)\,
                W_n(f,1,s+t)\bigr).
\end{split}
\end{equation}
Consider the summand with $n=m$, which is of the form
\begin{align*}
    E_\xi\bigl(&g\bigl(Y(w)\bigr)\,1_{\{0\le w < S_m\}}\,W_m(h\otimes f,1,(u,s+t))\bigr)\\
        &= E_\xi\bigl(g\bigl(Y(w)\bigr)\,1_{\{0\le w < S_m\}}\,
            E_\xi\bigl(W_m(h\otimes f,1,(u,s+t))\cond \cB_m\bigr)\bigr)\\
        &= E_\xi\bigl(g\bigl(Y(w)\bigr)\,1_{\{0\le w < S_m\}}
                    \,R_0(h\otimes f, 1, (u,s+t))(S_m,K_m)\bigr),
\end{align*}
where we made use of formula~\eqref{eq3ix}. Now we apply statement~(a) of lemma~\ref{lem3iii}
with the choice $q=1$ which yields (recall that $u_r=s$)
\begin{align*}
    E_\xi\bigl(&g\bigl(Y(w)\bigr)\,1_{\{0\le w < S_m\}}\,W_m(h\otimes f,1,(u,s+t))\bigr)\\
        &= E_\xi\bigl(g\bigl(Y(w)\bigr)\,1_{\{0\le w < S_m\}}
                    \,R_0(M(f,s+t,s)h, 1, u)(S_m,K_m)\bigr)\\
        &= E_\xi\bigl(g\bigl(Y(w)\bigr)\,1_{\{0\le w < S_m\}}
                    \,E_\xi\bigl(W_m(M(f,s+t,s)h, 1, u)\cond\cB_m\bigr)\bigr)\\
        &= E_\xi\bigl(g\bigl(Y(w)\bigr)\,1_{\{0\le w < S_m\}}\,
                    W_m(M(f,s+t,s)h, 1, u)\bigr),
\end{align*}
where we used formula~\eqref{eq3ix} again. Combining~\eqref{eq3iv} with~\eqref{eq3viib}
in $W_m(M(f,s+t,s)h, 1, u)$ we thus have shown
\begin{equation}    \label{eq3xv}
\begin{split}
    E_\xi\bigl(&g\bigl(Y(w)\bigr)\,1_{\{0\le w < S_m\}}\,W_m(h,1,u)\,W_m(f,1,s+t)\bigr)\\
        &= E_\xi\bigl(g\bigl(Y(w)\bigr)\,1_{\{0\le w < S_m\}}\,W_m(h,1,u)\,
                E_{Y(s)}\bigl(f(Y(t))\,1_{\{0<t<S_1\}}\bigr)\bigr).
\end{split}
\end{equation}
Next consider a generic summand with $n>m$ on the right hand side of~\eqref{eq3xiv}. Then
\begin{align*}
    E_\xi\bigl(&g\bigl(Y(w)\bigr)\,1_{\{0\le w<S_m\}}\,W_m(h,1,u)\,W_n(f,1,s+t)\bigr)\\
        &= E_\xi\bigl(g\bigl(Y(w)\bigr)\,1_{\{0\le w<S_m\}}\,W_m(h,1,u)\,
                E_\xi\bigl(W_n(f,1,s+t)\cond \cB_{m+1}\bigr)\bigr)\\
        &= E_\xi\bigl(g\bigl(Y(w)\bigr)\,1_{\{0\le w<S_m\}}\,W_m(h,1,u)\,
                    R_{n-m-1}(f,1,s+t)(S_{m+1},K_{m+1})\bigr)\\
        &= E_\xi\bigl(g\bigl(Y(w)\bigr)\,1_{\{0\le w<S_m\}}\,W_m(h,R_{n-m-1}(f,1,s+t),u)\bigr)
\end{align*}
where we used lemma~\ref{lem3vii} in the second step. Conditioning on $\cB_m$ gives
\begin{align*}
    E_\xi\bigl(&g\bigl(Y(w)\bigr)\,1_{\{0\le w<S_m\}}\,W_m(h,1,u)\,W_n(f,1,s+t)\bigr)\\
        &= E_\xi\bigl(g\bigl(Y(w)\bigr)\,1_{\{0\le w<S_m\}}\,
                E_\xi\bigl(W_m(h,R_{n-m-1}(f,1,s+t),u)\cond \cB_m\bigr)\bigr)\\
        &= E_\xi\bigl(g\bigl(Y(w)\bigr)\,1_{\{0\le w<S_m\}}\,
                R_0(h,R_{n-m-1}(f,1,s+t),u)(S_m,K_m)\bigr)\\
        &= E_\xi\bigl(g\bigl(Y(w)\bigr)\,1_{\{0\le w<S_m\}}\,
                R_0\bigl(N(R_{n-m-1}(f,1,s+t),s)h,1,u\bigr)(S_m,K_m)\bigr).
\end{align*}
Here we used lemmas~\ref{lem3iv}, \ref{lem3iii}.b, and in the last step also $u_r=s$.
Applying lemma~\ref{lem3iv}, we get
\begin{align*}
    E_\xi\bigl(g\bigl(Y(w)\bigr)\,&1_{\{0\le w<S_m\}}\,W_m(h,1,u)\,W_n(f,1,s+t)\bigr)\\
	   &=  E_\xi\Bigl(g\bigl(Y(w)\bigr)\,1_{\{0\le w<S_m\}}\\
	   &\hspace{4em}\times E_{\xi}\bigl(W_m(N(R_{n-m-1}(f,1,s+t),s)h,1,u)
				\cond \cB_m\bigr)\Bigr)\\
        &= E_\xi\bigl(g\bigl(Y(w)\bigr)\,1_{\{0\le w<S_m\}}\,
                W_m\bigl(N(R_{n-m-1}(f,1,s+t),s)h,1,u\bigr)\bigr)\\
        &= E_\xi\bigl(g\bigl(Y(w)\bigr)\,1_{\{0\le w<S_m\}}\,
                h\bigl(Y(u)\bigr)\,\chi_m(u)\\
        &\hspace{4em}\times E_{Y(s)}\bigl(R_{n-m-1}(f,1,s+t)(s+S_1,K_1)\bigr)\bigr).
\end{align*}
For $\eta\in\cG$ relation~\eqref{eq3via} yields
\begin{align*}
    E_\eta\bigl(R_{n-m-1}(f,1,s+t)(s+S_1,K_1)\bigr)
        &= E_\eta\bigl(R_{n-m-1}(f,1,t)(S_1,K_1)\bigr)\\
        &= E_\eta\bigl(E_\eta\bigl(W_{n-m}(f,1,t)\cond\cB_1\bigr)\bigr)\\
        &= E_\eta\bigl(W_{n-m}(f,1,t)\bigr),
\end{align*}
with another application of formula~\eqref{eq3xii}. With the choice $\eta=Y(s)$ this
relation therefore gives
\begin{equation}    \label{eq3xvi}
\begin{split}
    E_\xi\bigl(g\bigl(Y(w)\bigr)\,&1_{\{0\le w<S_m\}}\,W_m(h,1,u)\,W_n(f,1,s+t)\bigr)\\
        &= E_\xi\Bigl(g\bigl(Y(w)\bigr)\,1_{\{0\le w<S_m\}}\,h\bigl(Y(u)\bigr)\,\chi_m(u)\\
        &\hspace{7em}\times E_{Y(s)}\bigl(f(Y(t))\,1_{\{S_{n-m}\le t<S_{n-m+1}\}}\bigr)\Bigr).
\end{split}
\end{equation}
Formulae~\eqref{eq3xv} and~\eqref{eq3xvi} entail
\begin{align*}
    \sum_{n=m}^\infty &E_\xi\Bigl(g\bigl(Y(w)\bigr)\,1_{\{0\le w <S_m\}}\,W_m(h,1,u)\,
                W_n(f,1,s+t)\Bigr)\\[1ex]
        &= E_\xi\Bigl(g\bigl(Y(w)\bigr)\,1_{\{0\le w < S_m\}}\,W_m(h,1,u)\,
                E_{Y(s)}\bigl(f(Y(t))\,1_{\{0\le t<S_1\}}\bigr)\Bigr)\\
        &\hspace{4em}+ \sum_{n=m+1}^\infty E_\xi\Bigl(g\bigl(Y(w)\bigr)\,1_{\{0\le w<S_m\}}\,
                h\bigl(Y(u)\bigr)\,\chi_m(u)\\
        &\hspace{12em}\times E_{Y(s)}\bigl(f(Y(t))\,1_{\{S_{n-m}\le t<S_{n-m+1}\}}\bigr)\Bigr)\\[1ex]
        &= E_\xi\Bigl(g\bigl(Y(w)\bigr)\,1_{\{0\le w < S_m\}}\,W_m(h,1,u)\,
                E_{Y(s)}\bigl(f(Y(t))\bigr)\Bigr),
\end{align*}
which proves equation~\eqref{eq3xiii}.
\end{proof}

\subsection{A Brownian motion on \boldmath $\cG$ and its generator}\label{ssect3iv}

The stochastic process $Y$ and its underlying probability family $(\Xi,\cC,Q)$, $Q =
(Q_\xi,\,\xi\in\cG)$, are not very convenient to work with. Therefore we introduce
another version in this subsection. As the underlying sample space $\gO$ we choose
the path space $\CD(\R_+,\cG)$ of $Y$ endowed with the $\gs$--algebra $\cA$ generated
by the cylinder sets of $\CD(\R_+,\cG)$. Obviously, $Y$ is a measurable mapping from
$(\Xi,\cC)$ into $(\gO,\cA)$. For $\xi\in\cG$ let $P_\xi$ denote the image measure
of $Q_\xi$ under $Y$. Set $P=(P_\xi,\,\xi\in\cG)$. Moreover, let the canonical
coordinate process on $(\gO,\cA)$ be denoted by  $X=(X_t,\,t\in\R_+)$. Clearly, $X$
is a version of $Y$. We set $X_{+\infty}=\gD$, and denote the natural filtration of
$X$ by $\cF=(\cF_t,\,t\in\R_+)$. As usual $\cF_\infty$ stands for
$\gs(\cF_t,\,t\in\R_+)$. Whenever it is notationally more convenient we shall also
write $X(t)$ for $X_t$, $t\in\R_+$.

Let $H_V$ denote the hitting time of the set $V$ of vertices of $\cG$ by $X$:
\begin{equation*}
    H_V = \inf\{t > 0,\,X_t\in V\}.
\end{equation*}
Suppose that $X$ starts in $\xi\in l^\circ$, $l\in\cI\cup\cE$, and that $l$ is
isomorphic to the interval $I$. Then it follows directly from the discussion at the
end of subsection~\ref{ssect3ii} that the stopped process $X(\,\cdot\,\land H_V)$ is
equivalent to a standard Brownian motion on $I$ with absorption in the endpoint(s)
of $I$. The necessary path properties of $X$ being obvious, we therefore find that
$X$ satisfies all defining properties of a Brownian motion on $\cG$ (cf.\
definition~\ref{def1i}), except that we still have to
prove its strong Markov property. This will be done next.

Let $\theta = (\theta_t,\,t\in\R_+)$ denote the natural family of shift operators on
$\gO$: $\theta_t(\go) = \go(t+\,\cdot\,)$ for $\go\in\gO$. Thus in particular
$\theta$ is a family of shift operators for $X$. Since the simple Markov property is
a property of the finite dimensional distributions of a stochastic process, and the
finite dimensional distributions of $X$ and $Y$ coincide, it immediately follows
from proposition~\ref{prop3i} that $X$ is a Markov process. Then standard monotone
class arguments (e.g., \cite{KaSh91, ReYo91}) give the Markov property in the familiar
general form:

\begin{proposition}   \label{prop3viii}
Assume that $\xi\in\cG$, $t\in\R_+$, and that $W$ is an $\cF_\infty$--measurable,
positive or integrable random variable on $(\gO,\cA, P)$. Then
\begin{equation}    \label{eq3xvii}
    E_\xi\bigl(W\comp \theta_t \cond \cF_t\bigr) = E_{X_t}\bigl(W\bigr),
\end{equation}
holds true $P_\xi$--a.s.\ on $\{X_t\ne \gD\}$.
\end{proposition}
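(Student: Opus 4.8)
The plan is to upgrade the single-time Markov property to an arbitrary $\cF_\infty$-measurable functional in two stages: an induction yielding the Markov property for all finite-dimensional functionals $W=f_1(X_{s_1})\cdots f_n(X_{s_n})$, followed by a functional monotone-class argument removing the restriction to such $W$. First I would record the single-time simple Markov property for $X$, namely $E_\xi\bigl(f(X_{s+t})\cond\cF_s\bigr)=E_{X_s}\bigl(f(X_t)\bigr)$ on $\{X_s\ne\gD\}$ for every bounded measurable $f$ and all $s$, $t\in\R_+$. This is exactly the content of the paragraph preceding the proposition: tested against a generator $\prod_j h_j(X_{r_j})$, $r_j\le s$, of $\cF_s$, the identity becomes an equality between two expressions in the finite-dimensional distributions of $X$, and since $P_\xi$ is the image of $Q_\xi$ under $Y$ these coincide with those of $Y$, for which the corresponding identity holds by proposition~\ref{prop3i}. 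Throughout, the convention $f(\gD)=0$ makes every factor vanish where a coordinate equals $\gD$, so the exceptional set is harmless.

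Next I would prove by induction on $n$ that, for $0\le s_1<\dots<s_n$ and bounded measurable $f_1,\dots,f_n$, the functional $F=\prod_{i=1}^n f_i(X_{s_i})$ satisfies
\[
    E_\xi\bigl(F\comp\theta_t\cond\cF_t\bigr)=E_{X_t}(F)\qquad\text{on }\{X_t\ne\gD\}.
\]
The base case $n=1$ is the single-time property above. For the inductive step I would condition first on $\cF_{t+s_1}$: the factor $f_1(X_{t+s_1})$ is $\cF_{t+s_1}$-measurable, while $\prod_{i\ge 2}f_i(X_{t+s_i})=G\comp\theta_{t+s_1}$ with $G=\prod_{i\ge 2}f_i(X_{s_i-s_1})$ an $(n-1)$-fold product, so the induction hypothesis applied at time $t+s_1$ gives $E_\xi\bigl(F\comp\theta_t\cond\cF_{t+s_1}\bigr)=f_1(X_{t+s_1})\,\psi(X_{t+s_1})$ with $\psi(\xi')=E_{\xi'}(G)$. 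Passing to $\cF_t$ by the tower property, the right-hand side is $h(X_{t+s_1})=\bigl(h(X_{s_1})\bigr)\comp\theta_t$ for the bounded measurable $h=f_1\psi$, and the single-time property yields $E_{X_t}\bigl(h(X_{s_1})\bigr)$ on $\{X_t\ne\gD\}$. Running the identical peeling computation under $P_{\xi'}$ (the instance $t=0$) shows $E_{\xi'}(F)=E_{\xi'}\bigl(h(X_{s_1})\bigr)$, so the right-hand side equals $E_{X_t}(F)$, closing the induction.

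Finally I would extend to general $W$ by the functional form of the monotone class theorem (cf.~\cite{KaSh91, ReYo91}). The finite-dimensional functionals form a multiplicative class generating $\cF_\infty=\gs(X_s,\,s\ge 0)$; the collection of bounded $\cF_\infty$-measurable $W$ satisfying~\eqref{eq3xvii} is a vector space containing the constants and closed under bounded monotone limits, the latter because monotone convergence applies to the conditional expectation on the left and, once measurability of $\xi'\mapsto E_{\xi'}(W)$ is carried through the same extension, to $E_{X_t}(W)$ on the right. Hence~\eqref{eq3xvii} holds for all bounded $\cF_\infty$-measurable $W$, then for positive $W$ by monotone convergence ($W\wedge n\uparrow W$), and for integrable $W$ by writing $W=W^+-W^-$.

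I would expect the fiddliest point to be the bookkeeping in the inductive step: keeping the shift operators $\theta_t$ and $\theta_{t+s_1}$ aligned with the correct $\gs$-algebras, and verifying that the recursion used to evaluate $E_{X_t}(F)$ is literally the $t=0$ instance of the conditioning just performed. The $\gD$-convention must be threaded consistently so that both sides genuinely vanish off $\{X_t\ne\gD\}$ and no spurious terms survive; once this is set up, the monotone-class step is entirely routine.
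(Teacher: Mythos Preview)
Your proposal is correct and is precisely the approach the paper takes: the paper simply observes that the single-time Markov property transfers from $Y$ to $X$ because it is a statement about finite-dimensional distributions, and then invokes ``standard monotone class arguments (e.g., \cite{KaSh91, ReYo91})'' for the extension to general $W$. You have written out in full the induction over cylinder functionals and the functional monotone-class step that the paper leaves to those references.
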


A routine argument based on the path properties of $X$ (similar to, but much easier
than the one used in the proof of lemma~\ref{lemA} in appendix~\ref{appA}) shows
that $H_V$ is an $\cF$--stopping time. We have the following

\begin{lemma}   \label{lem3ix}
$X$ has the strong Markov property with respect to the hitting time $H_V$. That is,
for all $\xi\in\cG$, $t\in\R_+$, $f\in\bcB(\cG)$,
\begin{equation}    \label{eq3xviii}
    E_\xi\bigl(f(X_{t+H_V})\cond \cF_{H_V}\bigr)
        = E_{X_{H_V}}\bigl(f(X_t)\bigr)
\end{equation}
holds true $P_\xi$--a.s.
\end{lemma}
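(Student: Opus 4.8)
The plan is to deduce the strong Markov property at $H_V$ from the simple Markov property of Proposition~\ref{prop3viii} by the standard discretisation of the stopping time $H_V$, using that $H_V$ is already known to be an $\cF$--stopping time and that the paths of $X$ are right--continuous. First I would reduce to continuous test functions: fixing $\xi$, $t$ and $A\in\cF_{H_V}$, each side of~\eqref{eq3xviii} integrated over $A$ is of the form $\int_\cG f\,d\nu_A$ for a finite Borel measure on $\cG$ (with the conventions $f(\gD)=0$, $E_\gD=0$), so since $C_0(\cG)$ is measure--determining it suffices to prove~\eqref{eq3xviii} for $f\in C_0(\cG)$; the identity for all $f\in B(\cG)$ then follows. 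On $\{H_V=+\infty\}$ one has $X_{t+H_V}=X_{H_V}=\gD$, whence both sides vanish, and we may argue on $\{H_V<+\infty\}$.

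Next I would set
\[
    H_n=\frac{\lfloor 2^nH_V\rfloor+1}{2^n}\ \text{ on }\{H_V<+\infty\},\qquad
    H_n=+\infty\ \text{ on }\{H_V=+\infty\}.
\]
These are $\cF$--stopping times with values in $2^{-n}\N$, decreasing to $H_V$. For $A\in\cF_{H_V}\subset\cF_{H_n}$ and each dyadic value $k2^{-n}$ one has $A\cap\{H_n=k2^{-n}\}\in\cF_{k2^{-n}}$, so applying Proposition~\ref{prop3viii} with $W=f(X_t)$ (so that $W\comp\theta_{k2^{-n}}=f(X_{t+k2^{-n}})$) and summing over $k$ gives
\[
    E_\xi\bigl(f(X_{t+H_n});A\bigr)=E_\xi\bigl(g_t(X_{H_n});A\bigr),\qquad g_t(\eta):=E_\eta\bigl(f(X_t)\bigr),
\]
i.e.\ the strong Markov identity at the discrete stopping time $H_n$.

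Finally I would let $n\to\infty$. Because $t+H_n\downarrow t+H_V$ and the paths of $X$ are c\`adl\`ag, $X_{t+H_n}\to X_{t+H_V}$ and $X_{H_n}\to X_{H_V}=:v\in V$; bounded convergence turns the left--hand side into $E_\xi(f(X_{t+H_V});A)$. The hard part will be the right--hand side, which requires $g_t(X_{H_n})\to g_t(v)$, hence continuity of $g_t=U_tf$ as $x\to v$ along each edge incident with $v$ --- precisely the Feller property of the semigroup of $X$ at the vertices. Continuity of $g_t$ at interior edge points is immediate, since there $X$ agrees up to $H_V$ with a standard one--dimensional Brownian motion; the delicate point is the behaviour at the vertices, which is exactly what the gluing construction governs. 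I would discharge it by invoking the Feller property of Brownian motions on metric graphs (cf.\ Remarks~\ref{def1ii}), which holds for the constituent processes $Z^0$ and $Z^n_v$ on $\cG_0$, and transferring the resulting continuity to $U_t$ through the construction of $X$.

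An alternative that bypasses this continuity issue is to argue directly from the construction, using $H_V\le S_1$ and the fact that $X=Z^0$ on $[0,S_1)$. On $\{H_V<S_1\}$ the point $X_{H_V}=v$ is a genuine vertex hit by $Z^0$, and the strong Markov property of $Z^0$ at its first hitting time of $V$ exhibits the post--$H_V$ path as a fresh copy of $Z^0$ from $v$, which together with the independent crossover pieces $Z^1,Z^2,\dots$ reassembles a fresh copy of $X$ started at $v$. On $\{H_V=S_1\}$ one has $X_{H_V}=K_1\in V_c$, and by construction the future is the independent family $Z^1_{K_1},Z^2,\dots$, again a fresh copy of $X$ from $v$. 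Here the work shifts to checking the independence of these pieces from $\cF_{H_V}$ and the measurability bookkeeping, but no Feller property is needed.
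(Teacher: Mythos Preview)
Your discretisation route has a logical gap that you half--acknowledge but do not close. To pass to the limit on the right--hand side you need continuity of $g_t=U_tf$ at the vertices, and you propose to get this from the Feller property. But in the paper's development the Feller property of $X$ is Proposition~\ref{prop3x}, and its proof rests on the first passage time formula~\eqref{eq3xxii}, which in turn uses precisely the strong Markov property at $H_V$ that you are trying to establish; so invoking Feller for $X$ here is circular. Your fallback --- Feller for $Z^0$ and then ``transferring the resulting continuity to $U_t$ through the construction of $X$'' --- is not a small step: the semigroup of $X$ is \emph{not} that of $Z^0$, since a path of $X$ starting near a vertex $v\in V_c$ can hit $v$ and then cross over via the gluing before time $t$, so $U_t^Xf$ near $v$ mixes contributions from both graphs. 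Making that transfer precise is exactly the content of the construction and requires the kind of bookkeeping you defer to the alternative.

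Your ``alternative'' is in fact the paper's own proof. The paper pulls everything back to $Y$, uses $S_V\le S_1$, expands $f\bigl(Y(t+S_V)\bigr)$ over the intervals $[S_n,S_{n+1})$, and treats the cases $\{S_V<S_1\}$ and $\{S_V=S_1\}$ separately. On $\{S_V<S_1\}$ it applies the strong Markov property of $Z^0$ at its hitting time of $V$ (together with $S_1=S_V+S_1\comp\vt_{S_V}$ and $K_1=K_1\comp\vt_{S_V}$ on that event); on $\{S_V=S_1\}$ it uses $Y(S_V)=K_1$ and the independence built into the product structure. The ``measurability bookkeeping'' you allude to is handled by the machinery of subsection~\ref{ssect3iii}, in particular lemma~\ref{lem3vii} (the relation $E_\xi\bigl(W_n(f,1,\cdot)\cond\cB_1\bigr)=R_{n-1}(f,1,\cdot)(S_1,K_1)$) and the observation that $\cF^Y_{S_V}\subset\cB_1$. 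So your second outline is correct in spirit and matches the paper, but the real work lies in those lemmas rather than in anything you have written here.
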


\begin{proof}
To begin with, observe that since $\gO=\CD(\cG)$ and $X$ is the canonical coordinate
process, there is a natural family $\ga=(\ga_t,\,t\in\R_+)$ of stopping operators for
$X$, namely $\ga_t(\go) = \go(\,\cdot\,\land t)$, $t\in\R_+$. Therefore we get that
$\cF_T = \gs(X_{s\land T},\,s\in\R_+)$ for any stopping time $T$ relative to $\cF$.
Indeed, one can show this along the same lines used to prove Galmarino's
theorem (e.g., \cite[p.~458]{Ba91}, \cite[p.~86]{ItMc74}, \cite[p.~43~ff]{Kn81},
\cite[p.~45]{ReYo91}). Therefore it is sufficient to prove that for all $n\in\N$,
$s_1$, \dots, $s_n\in\R_+$, $t\in\R_+$, $\xi\in\cG$, and all $g\in\bcB(\cG^n)$,
$f\in\bcB(\cG)$, the following formula
\begin{equation}    \label{eq3xix}
\begin{split}
    E_\xi\bigl(g(X(s_1&\land H_V),\dotsc,X(s_1\land H_V))\,f(X(t+ H_V))\bigr)\\
        &= E_\xi\bigl(g(X(s_1\land H_V),\dotsc,X(s_1\land H_V))\,
                     E_{X(H_V)}\bigl(f(X(t))\bigr)\bigr)
\end{split}
\end{equation}
holds. Recall that $S_V$ denotes the hitting time of $V$ by $Y$. Since $P_\xi$ is
the image of $Q_\xi$ under $Y$, and since $S_V = H_V\comp Y$,
equation~\eqref{eq3xix} is equivalent to
\begin{equation}    \label{eq3xx}
    E_\xi\bigl(G\,f(Y(t+S_V))\bigr)
        = E_\xi\bigl(G\,E_{Y(S_V)}\bigl(f(Y(t))\bigr)\bigr),
\end{equation}
where we have set
\begin{equation*}
    G = g\bigl(Y(s_1\land S_V),\dotsc, Y(s_n\land S_V)\bigr).
\end{equation*}
Recall that $S_1$ denotes the hitting time of $V_c\subset V$ by $Y$, so that $S_V\le
S_1$. $Y$ is progressively measurable relative to $\cF^Y$ which entails that $G$ is
measurable with respect to $\cF^Y_{S_V}\subset \cF^Y_{S_1}\subset \cB_1$ (see also
the corresponding argument in the proof of lemma~\ref{lemA}). Using the notation of
subsection~\ref{ssect3iii} we write
\begin{equation}    \label{eq3xxi}
\begin{split}
    E_\xi\big(G\, &f(Y(t+S_V))\bigr)\\
        &= E_\xi\big(G\, f(Y(t+S_V);\,S_V\le t+S_V < S_1)\bigr)\\
        &\hspace{4em} + \sum_{n=1}^\infty E_\xi\bigl(G\,E_\xi\bigl(f(Y(t+u))\,
                    \chi_n(t+u)\cond \cB_1\bigr)\eval_{u=S_V}\bigr).
\end{split}
\end{equation}
For the last equality --- similarly as in the proof of lemma~\ref{lem3ii} ---
we made use of the product structure of the probability space $(\Xi,\cC,Q_\xi)$,
and the fact that $S_V\le S_1$, which entails that $S_V$ only depends on the
variable $\go^0\in \Xi^0$. By lemma~\ref{lem3vii} and formula~\eqref{eq3via} we
get for $u\le S_1$, $n\in\N$,
\begin{align*}
    E_\xi\bigl(f(Y(t+u))\,\chi_n(t+u)\cond \cB_1\bigr)
        &= R_{n-1}(f,1,t+u)(S_1,K_1)\\
        &= R_{n-1}(f,1,t)(S_1-u,K_1).
\end{align*}
Then
\begin{align*}
    E_\xi\bigl(G\, R_{n-1}&(f,1,t)(S_1-S_V,K_1)\bigr)\\
        &= E_\xi\bigl(G\,R_{n-1}(f,1,t)(0,K_1);\,S_V=S_1\bigr)\\
        &\hspace{4em} +E_\xi\bigl(G\,R_{n-1}(f,1,t)(S_1-S_V,K_1);\,S_V<S_1\bigr)\\
        &= E_\xi\bigl(G\,E_{Y(S_V)}\bigl(f(Y(t))\,\chi_{n-1}(t)\bigr);\,S_V=S_1\bigr)\\
        &\hspace{4em} +E_\xi\bigl(G\,R_{n-1}(f,1,t)(S_1-S_V,K_1);\,S_V<S_1\bigr),
\end{align*}
because on $S_V=S_1$, $Y(S_V)=Y(S_1)=K_1$. The second term on the right hand side of
the last equality only involves the random variables $Y(s_i\land S_V)$, $S_1$,
$S_V$, and $K_1$. They are all defined in terms of the strong Markov process $Z^0$
underlying the construction of $Y$ (cf.\ section~\ref{ssect3ii}). Moreover, on the
event $\{S_V < S_1\}$ we get from the definition of $S_1$ as the hitting time of
$V_c$ that $S_1 = S_V + S_1\comp \vt_{S_V}$. Also, on $\{S_V < S_1\}$,
$K_1 = K_1\circ\vt_{S_V}$ holds true.
On the other hand $G$ is measurable with respect to $\cF^0_{S_V}$, where $\cF^0$ is
the natural filtration of $Z^0$. Thus the strong Markov property of $Z^0$ gives
\begin{align*}
    E_\xi\bigl(G\,&R_{n-1}(f,1,t)(S_1-S_V,K_1);\,S_V<S_1\bigr)\\
        &= E_\xi\bigl(G\,E_{Y(S_V)}\bigl(R_{n-1}(f,1,t)(S_1,K_1)\bigr);\,S_V<S_1\bigr)\\
        &= E_\xi\bigl(G\,E_{Y(S_V)}\bigl(f(Y(t))\,\chi_n(t)\bigr);\,S_V<S_1\bigr).
\end{align*}
In the last step we used for $\eta\in\cG$,
\begin{align*}
    E_\eta\bigl(R_{n-1}(f,1,t)(S_1,K_1)\bigr)
        &= E_\eta\bigl(E_\eta\bigl(W_n(f,1,t)\cond \cB_1\bigr)\bigr)\\
        &= E_\eta\bigl(f(Y(t))\,\chi_n(t)\bigr),
\end{align*}
with another application of lemma~\ref{lem3vii}, and then we made the choice $\eta=
Y(S_V)$. Similarly, for the first term on the right hand side of equation~\eqref{eq3xxi}
we can use the strong Markov property of $Z^0$ (together with $\{t+S_V<S_1\}
= \{t<S_1\comp\vt_{S_V}\}\cap\{S_V<S_1\}$) to show that
\begin{equation*}
\begin{split}
    E_\xi\bigl(G\,f(Y(t+S_V));\,&S_V\le t+S_V < S_1\bigr)\\
        &= E_\xi\bigl(G\,E_{Y(S_V)}\bigl(f(Y(t);\,t<S_1)\bigr);\,S_V<S_1\bigr).
\end{split}
\end{equation*}
Inserting these results into the right hand side of formula~\eqref{eq3xxi}, we find
\begin{align*}
    E_\xi\bigl(&G\,f(Y(t+S_V))\bigr)\\
        &= E_\xi\bigl(G\,E_{Y(S_V)}\bigl(f(Y(t));\,t<S_1\bigr);\,S_V<S_1\bigr)\\
        &\hspace{4em} +\sum_{n=1}^\infty E_\xi\bigl(G\,E_{Y(S_V)}\bigl(f(Y(t))\,
                            \chi_n(t)\bigr);\,S_V<S_1\bigr)\\
        &\hspace{4em} +\sum_{n=0}^\infty E_\xi\bigl(G\,E_{Y(S_V)}\bigl(f(Y(t))\,
                            \chi_n(t)\bigr);\,S_V=S_1\bigr)\\
        &= E_\xi\bigl(G\,E_{Y(S_V)}\bigl(f(Y(t))\bigr)\bigr).
\end{align*}
Thus equation~\eqref{eq3xx} holds and lemma~\ref{lem3ix} is proved.
\end{proof}

\begin{proposition}   \label{prop3x}
$X$ is a Feller process.
\end{proposition}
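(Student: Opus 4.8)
The plan is to carry out, for the process $X$ just constructed, the Feller argument already announced in Remarks~\ref{def1ii}: to show that the transition semigroup $T_t f(\xi) = E_\xi\bigl(f(X_t)\bigr)$, which by the simple Markov property (Proposition~\ref{prop3viii}) is a positivity preserving contraction semigroup on $B(\cG)$, restricts to a strongly continuous positivity preserving contraction semigroup on $C_0(\cG)$. It is cleaner to argue at the level of the resolvent $R_\gl f(\xi) = E_\xi\bigl(\int_0^\infty e^{-\gl t} f(X_t)\,dt\bigr)$, $\gl>0$, and to establish the two properties that $R_\gl$ maps $C_0(\cG)$ into itself and that $\gl R_\gl f \to f$ as $\gl\to\infty$. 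Together with the resolvent identity (immediate from the semigroup property) these place us in the setting of the Feller resolvent theory collected in the appendix, from which the assertion follows.

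The essential step is the first passage decomposition at the finite vertex set $V$. Using the strong Markov property of $X$ at the hitting time $H_V$ (Lemma~\ref{lem3ix}) I would write
\begin{equation*}
    R_\gl f(\xi) = E_\xi\Bigl(\int_0^{H_V} e^{-\gl t} f(X_t)\,dt\Bigr)
        + E_\xi\bigl(e^{-\gl H_V}\,R_\gl f(X_{H_V})\bigr).
\end{equation*}
When $\xi$ lies in the interior of an edge $l$ isomorphic to an interval $I$, the stopped process $X(\,\cdot\,\land H_V)$ is a standard Brownian motion on $I$ absorbed at the endpoints, so the first term is the classical absorbed-Brownian-motion resolvent applied to $f_l$; it is explicit, continuous on $l^\circ$, and on external edges vanishes as $x\to\infty$. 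In the second term $X_{H_V}$ takes values in the finite set $V$, so $R_\gl f(X_{H_V})$ reduces to the finitely many numbers $R_\gl f(v)$, weighted by the coefficients $\xi\mapsto E_\xi\bigl(e^{-\gl H_V};\,X_{H_V}=v\bigr)$, which are again explicit Laplace transforms of first passage times of absorbed Brownian motion on $I$, hence continuous and vanishing at infinity on external edges.

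Continuity of $R_\gl f$ at a vertex $v$ is then obtained by letting $\xi\to v$ along any incident edge: $H_V\to 0$, so the integral term vanishes, while $E_\xi\bigl(e^{-\gl H_V};\,X_{H_V}=v\bigr)\to 1$ and the coefficients attached to the other vertices tend to $0$, whence $R_\gl f(\xi)\to R_\gl f(v)$ irrespective of the approaching edge. This yields $R_\gl\bigl(C_0(\cG)\bigr)\subset C_0(\cG)$. For the second property, right continuity of the paths at $t=0$ together with $X_0=\xi$ gives $f(X_t)\to f(\xi)$ $P_\xi$--a.s.\ for $f\in C_0(\cG)$, so by dominated convergence $T_t f(\xi)\to f(\xi)$ and hence $\gl R_\gl f(\xi)\to f(\xi)$ pointwise; since $R_\gl$ is a positive contraction resolvent carrying $C_0(\cG)$ into itself, this upgrades to the norm statement $\|\gl R_\gl f - f\|_\infty\to 0$ by the standard argument recalled in the appendix.

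I expect the only real obstacle to be the continuity of $R_\gl f$ exactly at the vertices, where the pasted construction of $X$ meets several edges and could a priori spoil regularity. What makes this tractable is precisely that the interior behaviour up to $H_V$ is ordinary one-dimensional absorbed Brownian motion with fully explicit resolvent kernels and first passage transforms, and that $V$ is finite, so the matching at each vertex comes down to finitely many elementary limits; everything else is soft Feller semigroup theory.
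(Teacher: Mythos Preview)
Your proposal is correct and follows essentially the same route as the paper: the first passage time decomposition of $R_\gl$ at $H_V$ via Lemma~\ref{lem3ix}, identification of the two terms with the explicit Dirichlet resolvent and hitting--time Laplace transforms of one--dimensional absorbed Brownian motion, and the pointwise convergence $U_t f(\xi)\to f(\xi)$ from right continuity of paths, with the Feller property then read off from the equivalences in Appendix~\ref{app_FSR}. The paper differs only in that it writes out the explicit kernels~\eqref{eq3xxiii}--\eqref{eq3xxv} rather than arguing qualitatively, and it invokes condition~(v) of Theorem~\ref{thm_B_3} directly (pointwise $U_t f\to f$ plus $R\Coe\subset\Coe$) instead of passing through $\gl R_\gl f\to f$; these are cosmetic differences.
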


\begin{proof}
It is well-known, that it is sufficient to prove~(i) that the resolvent of $X$
preserves $C_0(\cG)$, and~(ii) that for all $f\in C_0(\cG)$, $\xi\in\cG$,
$E_\xi\bigl(f(X_t)\bigr)$ converges to $f(\xi)$ as $t$ decreases to $0$. (A complete
proof can be found in appendix~\ref{app_FSR}.) Statement~(ii) immediately
follows by an application of the dominated convergence theorem and the fact that $X$
is a normal process with right continuous paths.

To prove statement~(i) consider the resolvent $R = (R_\gl,\,\gl>0)$ of $X$, and let
$\gl>0$. Since $X$ is strongly Markovian with respect to the hitting time $H_V$ of
the set of vertices $V$ (lemma~\ref{lem3ix}), we get for $\xi\in\cG$,
$f\in\bcB(\cG)$ the first passage time formula (e.g., \cite{Ra56} or
\cite{ItMc74})
\begin{equation}    \label{eq3xxii}
    (R_\gl f)(\xi)
        = (R^D_\gl f)(\xi) + E_\xi\bigl(e^{-\gl H_V}\,(R_\gl f)(X_{H_V})\bigr),
\end{equation}
where $R^D$ is the Dirichlet resolvent. That is, $R^D$ is the resolvent of the
process $X$ with killing at the moment of reaching a vertex of $\cG$. Recall the
equivalence of the stopped process $X(\,\cdot\,\land H_V)$ with the Brownian motion
with absorption on the corresponding interval $I$ stated at the beginning of this
subsection. Then we can give explicit expressions for all entities appearing in the
first passage time formula~\eqref{eq3xxii}. Using the well-known formulae for
Brownian motions on the real line (see, e.g., \cite[p.~73ff]{DyJu69},
\cite[p.~29f]{ItMc74}) we find for $\xi$, $\eta\in\cG^\circ$,
\begin{subequations}    \label{eq3xxiii}
\begin{equation}    \label{eq3xxiiia}
    r^D_\gl(\xi,\eta)
        = \sum_{i\in \cI} r^D_{\gl,i}(\xi,\eta)\,1_{\{\xi,\eta\in i\}}
            + \sum_{e\in\cE} r^D_{\gl,e}(\xi,\eta)\,1_{\{\xi,\eta\in e\}},
\end{equation}
with
\begin{equation}    \label{eq3xxiiib}
    r^D_{\gl,i}(\xi,\eta)
        = \frac{1}{\sgl}\,\sum_{k\in\Z}\Bigl(e^{-\sgl|x-y+2ka_i|}-e^{-\sgl|x+y+2ka_i|}\Bigr),
\end{equation}
where in local coordinates $\xi=(i,x)$, $\eta=(i,y)$, $x$, $y\in (0,a_i)$. In the case of
an external edge $e$, we get
\begin{equation}    \label{eq3xxiiic}
    r^D_{\gl,e}(\xi,\eta) = \frac{1}{\sgl}\,\Bigl(e^{-\sgl|x-y|}-e^{-\sgl(x+y)}\Bigr),
\end{equation}
\end{subequations}
with $\xi=(e,x)$, $\eta=(e,y)$, $x$, $y\in (0+\infty)$. Remark that both kernels
vanish whenever $\xi$ or $\eta$ converge from the interior of any edge to a vertex
to which the edge is incident.

Consider the second term on the right hand side of equation~\eqref{eq3xxii}. Suppose that
$\xi\in i^\circ$, $i\in \cI$, and that $i$ is isomorphic to $[0,a_i]$. Assume furthermore
that $v_1$, $v_2$ are the vertices in $V$ to which $i$ is incident, and that under this
isomorphism $v_1$ corresponds to $0$, while $v_2$ corresponds to $a_i$. Then we get
\begin{equation*}
\begin{split}
    E_\xi\bigl(e^{-\gl H_V}\,(R_\gl f)(X_{H_V})\bigr)
        = E_\xi\bigl(&e^{-\gl H_{v_1}};\,H_{v_1}<H_{v_2}\bigr)\,(R_\gl f)(v_1)\\
            &+  E_\xi\bigl(e^{-\gl H_{v_2}};\,H_{v_2}<H_{v_1}\bigr)\,(R_\gl f)(v_2),
\end{split}
\end{equation*}
because $X$ has paths which are continuous up to the lifetime of $X$, and $X$ cannot
be killed before reaching a vertex. Here $H_{v_k}$, $k=1$, $2$, denotes the hitting
time of the vertex $v_k$.  The expectation values in the last line are those of a
standard Brownian motion and they are well-known, too (see, e.g.,
\cite[p.~73ff]{DyJu69}, \cite[p.~29f]{ItMc74}). Thus for $\xi = (i,x)$, $x\in
[0,a_i]$,
\begin{equation}    \label{eq3xxiv}
\begin{split}
    E_\xi\bigl(&e^{-\gl H_V}\,(R_\gl f)(X_{H_V})\bigr)\\
        &= \frac{\sinh\bigl(\sgl (a_i-x)\bigr)}{\sinh\bigl(\sgl a_i\bigr)}\,(R_\gl f)(v_1)
            + \frac{\sinh\bigl(\sgl x\bigr)}{\sinh\bigl(\sgl a_i\bigr)}\,(R_\gl f)(v_2).
\end{split}
\end{equation}
Similarly, for $\xi\in e^\circ$ with local coordinates $\xi=(e,x)$, $x\in (0,+\infty)$
we find
\begin{equation}    \label{eq3xxv}
    E_\xi\bigl(e^{-\gl H_V}\,(R_\gl f)(X_{H_V})\bigr)
        = e^{-\sgl x}\,(R_\gl f)(v),
\end{equation}
where $v$ is the vertex to which $e$ is incident.

With the formulae~\eqref{eq3xxii}--\eqref{eq3xxv} it is straightforward to check
that $R_\gl$ maps $C_0(\cG)$ into itself, and the proof is complete.
\end{proof}

Since $X$ has right continuous paths, standard results (see, e.g,
\cite[Theorem~III.3.1]{ReYo91}, or \cite[Theorem~III.15.3]{Wi79}) provide the

\begin{corollary}   \label{cor3xi}
$X$ is strongly Markovian.
\end{corollary}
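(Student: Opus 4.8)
The plan is to derive the strong Markov property of $X$ from the simple Markov property (Proposition~\ref{prop3viii}) together with the Feller property (Proposition~\ref{prop3x}) and the right continuity of its paths, via the classical dyadic approximation of a stopping time. This is precisely the argument underlying the cited references, so I only outline it.

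Let $T$ be an arbitrary finite $\cF$--stopping time, and for $n\in\N$ approximate it from above by
$$T_n = \frac{m}{2^n}\quad\text{on}\quad\Bigl\{\frac{m-1}{2^n}\le T<\frac{m}{2^n}\Bigr\},\qquad m\in\N,$$
with $T_n=+\infty$ on $\{T=+\infty\}$. Each $T_n$ takes values in a countable set, $T_n\downarrow T$, and $T_n\ge T$, so that $\cF_T\subset\cF_{T_n}$. Writing $g_t(\eta)=E_\eta\bigl(f(X_t)\bigr)$ for $\eta\in\cG$, the strong Markov property at the discretely valued $T_n$ follows at once from Proposition~\ref{prop3viii}: for $f\in\bcB(\cG)$ and $A\in\cF_{T_n}$ one decomposes $A=\bigcup_m\bigl(A\cap\{T_n=m2^{-n}\}\bigr)$ into the disjoint pieces $A\cap\{T_n=m2^{-n}\}\in\cF_{m2^{-n}}$, applies the simple Markov property at the fixed time $m2^{-n}$ to each piece, and sums to obtain
$$E_\xi\bigl(f(X_{t+T_n})\,1_A\bigr)=E_\xi\bigl(g_t(X_{T_n})\,1_A\bigr).$$

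Next I pass to the limit $n\to\infty$ for $f\in C_0(\cG)$ and $A\in\cF_T$, noting that $A\in\cF_{T_n}$ for every $n$ because $\cF_T\subset\cF_{T_n}$. By the Feller property (Proposition~\ref{prop3x}) the function $g_t$ lies in $C_0(\cG)$, hence is continuous. Since $T_n\downarrow T$ and the paths of $X$ are right continuous, we have $X_{T_n}\to X_T$ and $X_{t+T_n}\to X_{t+T}$ pointwise on $\{T<+\infty\}$, while on $\{T=+\infty\}$ both sides of the displayed identity vanish because $f(\gD)=0$. Continuity of $f$ and of $g_t$ then give $f(X_{t+T_n})\to f(X_{t+T})$ and $g_t(X_{T_n})\to g_t(X_T)$, and bounded convergence yields
$$E_\xi\bigl(f(X_{t+T})\,1_A\bigr)=E_\xi\bigl(g_t(X_T)\,1_A\bigr)=E_\xi\bigl(E_{X_T}(f(X_t))\,1_A\bigr)$$
for all $A\in\cF_T$; that is, $E_\xi\bigl(f(X_{t+T})\cond\cF_T\bigr)=E_{X_T}\bigl(f(X_t)\bigr)$ holds $P_\xi$--a.s. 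A standard monotone class argument extends this from $f\in C_0(\cG)$ to $f\in\bcB(\cG)$, and then from single time functions to arbitrary $\cF_\infty$--measurable functionals $W$, giving $E_\xi\bigl(W\comp\theta_T\cond\cF_T\bigr)=E_{X_T}(W)$ on $\{X_T\ne\gD\}$, which is the asserted strong Markov property.

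The only genuinely delicate step is the limit passage: it is here that the Feller property is essential, since it is what guarantees that $g_t$ is continuous, so that the right continuity of $X$ can be transferred through $g_t$ to obtain $g_t(X_{T_n})\to g_t(X_T)$. The handling of the lifetime and of the cemetery point $\gD$ is routine: all functions are extended by $0$ at $\gD$ and $\gD$ is absorbing, so every identity holds trivially on $\{X_T=\gD\}$ and on $\{T=+\infty\}$.
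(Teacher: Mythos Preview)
Your argument is correct and is precisely the classical dyadic-approximation proof that a Feller process with right-continuous paths is strongly Markovian; the paper itself does not spell this out but simply invokes the standard references \cite[Theorem~III.3.1]{ReYo91} and \cite[Theorem~III.15.3]{Wi79}, whose proofs proceed exactly as you outline.
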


Thus we have also proved the

\begin{corollary}   \label{cor3xii}
$X$ is a Brownian motion on $\cG$ in the sense of definition~\ref{def1i}.
\end{corollary}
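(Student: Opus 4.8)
The plan is simply to collect, against definition~\ref{def1i}, the facts about $X$ that have already been established in this section. According to definition~\ref{def1i} together with the remarks in~\ref{def1ii}, to be a Brownian motion on $\cG$ the process $X$ must be (a)~normal, (b)~strongly Markov, (c)~a.s.\ equipped with c\`adl\`ag paths that are continuous on $[0,\zeta)$, and (d)~such that, started on an edge $l$ isomorphic to an interval $I$, the process stopped at the hitting time of the incident vertex (vertices) is equivalent to a standard one-dimensional Brownian motion on $I$ absorbed at its endpoint(s).

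First I would dispose of (a), (c), and (d), all of which were recorded in subsection~\ref{ssect3iv}. Since $X$ is the canonical coordinate process on the path space $\CD(\R_+,\cG)$ and is a version of $Y$, normality of $X$ is inherited from that of $Y$, and all paths of $X$ lie in $\CD(\R_+,\cG)$, giving (c). For (d) I would invoke the observation made at the start of subsection~\ref{ssect3iv}: if $X$ starts in $\xi\in l^\circ$ with $l$ isomorphic to $I$, then, by the behavior of $Y$ up to the hitting time $S_V$ of $V$ analyzed at the end of subsection~\ref{ssect3ii} together with the relation $S_V = H_V\comp Y$, the stopped process $X(\,\cdot\,\land H_V)$ is equivalent to a standard Brownian motion on $I$ with absorption at the endpoint(s) of $I$.

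The one property carrying genuine content is the strong Markov property (b), but this is exactly Corollary~\ref{cor3xi}. That corollary is in turn obtained from Proposition~\ref{prop3x} --- the assertion that $X$ is a Feller process --- together with the right-continuity of the paths, via the standard theorem that a Feller process with right-continuous paths is strongly Markov.

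Assembling (a)--(d), $X$ satisfies every clause of definition~\ref{def1i}, so it is a Brownian motion on $\cG$. I therefore do not expect any real obstacle at this final step: the corollary is an assembly statement, and all of its substance was already carried by the simple Markov property (Proposition~\ref{prop3i}), the Feller property (Proposition~\ref{prop3x}), and the resulting strong Markov property (Corollary~\ref{cor3xi}) proved above.
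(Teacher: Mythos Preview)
Your proposal is correct and matches the paper's own reasoning exactly: the paper notes just before proposition~\ref{prop3x} that all defining properties of definition~\ref{def1i} hold for $X$ except the strong Markov property, and after establishing the latter via proposition~\ref{prop3x} and corollary~\ref{cor3xi} simply records corollary~\ref{cor3xii} with the words ``Thus we have also proved.'' There is nothing to add.
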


It remains to calculate the domain of the generator of $X$, i.e., the boundary
conditions at the vertices. Let $v\in V$, and assume that $X$ starts in $v$. Then by
construction of $X$ and $Y$, $X$ is equivalent to $Z^0$ with start in $v$ up to its
first hitting of a shadow vertex. That is, $X$ is equivalent to $Z^0$ up to the
first time $X$ hits a vertex different from $v$. It follows that if $v$ is absorbing
for $Z^0$, then it is so for $X$, and if $v$ is an exponential holding point with
jump to $\gD$, then it is also so for $X$ with the same exponential rate. In
particular, $v$ is a trap for $X$ if and only if it is a trap for $Z^0$. If $v$ is
not a trap, then we can use Dynkin's formula~\cite[p.~140, ff.]{Dy65a} to calculate
the boundary condition implemented by $X$. Clearly, this gives the same boundary
conditions as for $Z^0$, because Dynkin's formula only involves an arbitrary small
neighborhood of the vertex. (See also the corresponding arguments in
section~\ref{sect2}.) Thus we have proved the following

\begin{theorem} \label{thm3xiii}
$X$ is a Brownian motion on $\cG$ whose generator has a domain characterized by
the same boundary conditions as the generator of $Z^0$.
\end{theorem}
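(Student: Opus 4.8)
The plan is to identify, vertex by vertex, the boundary coefficients of the generator of $X$ with those of the generator of $Z^0$, exploiting that these conditions are local and are extracted from the process only through its behaviour in an arbitrarily small neighbourhood of each vertex. By corollary~\ref{cor3xii}, $X$ is a Brownian motion on $\cG$, so theorem~\ref{thm1i} already tells us that its generator $A$ has domain $\cH_{a,b,c}$ for some data $(a,b,c)$ as in~\eqref{eq1i}, \eqref{eq1ii}, with $Af=\frac12 f''$ by lemma~\ref{lem2i}. It therefore suffices to show that for each $v\in V$ the triple $(a_v,(b_{v_l})_{l\in\cL(v)},c_v)$ produced by $X$ coincides with the one produced by $Z^0$, and for this I would rerun the case analysis of the proof of lemma~\ref{lem2ii}.

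The key preliminary step is a local coincidence statement. Fix $v\in V$, say $v\in V_c\cap\cG_1$ (the cases $v\in V_c\cap\cG_2$ and $v\notin V_c$ are analogous and easier). Starting $X$ (equivalently $Y$) at $v$, the construction of subsection~\ref{ssect3ii} has $Y$ evolve exactly as $Z^0$ until the crossover time $S_1$, and $S_1$ is the first time $Z^0$ reaches a shadow vertex. Every shadow vertex relevant to $v$ lies at distance $b_k>0$ from $v$ along a joined edge, and under the identification $\cG=\hat\cG_1\uplus\hat\cG_2\subset\cG_0$ the portion of each new internal edge $i_k$ within distance $\gep$ of $v$ is carried onto the portion of the external edge $e_k$ of $Z^0$ within distance $\gep$ of $v$. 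Hence, choosing $\gep$ smaller than every $b_k$ and than the length of every internal edge of $\cG$ incident to $v$, the ball $B_\gep(v)$ contains no other vertex and no shadow vertex, no crossover can occur before $X$ leaves $B_\gep(v)$, and the stopped processes $X(\,\cdot\,\land H_{v,\gep})$ under $P_v$ and $Z^0(\,\cdot\,\land H_{v,\gep})$ under $Q^0_v$ are equivalent.

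Granting this, the three cases of lemma~\ref{lem2ii} transfer verbatim. If $v$ is a trap for $Z^0$ then the local coincidence makes it a trap for $X$, so $a_v=0$, $c_v=1$, $b_{v_l}=0$; if $v$ is exponentially holding for $Z^0$ with rate $\gb$, then $X$ holds at $v$ with the same rate and likewise jumps to $\gD$, giving~\eqref{eq2i} with the same $\gb$; and if $v$ is left immediately ($\gb_v=+\infty$) I would compute $Af(v)$ from Dynkin's formula~\eqref{eq2ii}. Every quantity entering that formula---the averaged value $E_v(f(X(H_{v,\gep})))$, the exit probabilities $P_v(X(H_{v,\gep})\in l)$ and $P_v(X(H_{v,\gep})=\gD)$, and the mean exit time $E_v(H_{v,\gep})$---is a functional of $X(\,\cdot\,\land H_{v,\gep})$ alone, hence equals the corresponding quantity for $Z^0$. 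The limits defining $a_v(\gep)$, $b_{v_l}(\gep)$, $c_v(\gep)$ therefore agree with those for $Z^0$, and the Wentzell condition~\eqref{eq1iii} at $v$ is the same for both processes, the coefficient attached to $i_k$ being the image under the identification of the coefficient attached to $e_k$.

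Since $v$ was arbitrary and the conditions~\eqref{eq1iii} are local, the generators of $X$ and $Z^0$ carry the same data at every vertex and hence have the same domain. The one point demanding genuine care---the main obstacle---is precisely the geometric bookkeeping of the second paragraph for $v\in V_c$: one must verify that exchanging the external edge $e_k$ of $\cG_0$ for the internal edge $i_k$ of $\cG$ leaves the local star at $v$, and thus the edgewise exit probabilities, unchanged, so that the coefficients $b_{v_l}$ are transported correctly across the map $\gk$.
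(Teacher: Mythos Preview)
Your proposal is correct and follows essentially the same approach as the paper: the paper's argument (given in the paragraph immediately preceding the theorem) also rests on the observation that, started at a vertex $v$, the process $X$ is equivalent to $Z^0$ up to the first hitting of a shadow vertex, so that the trap/holding cases transfer directly and, in the instantaneous case, Dynkin's formula---depending only on an arbitrarily small neighbourhood of $v$---yields identical boundary data. Your version is in fact more explicit than the paper's about the geometric identification of the edges $e_k$ of $\cG_0$ with the new internal edges $i_k$ of $\cG$, a point the paper leaves implicit.
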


\section{Proof of Theorem~\ref{thm1ii}}	\label{sect4}
Suppose that $\cG=(V,\cI,\cE,\p)$ is a metric graph without tadpoles. Let data $a$,
$b$, $c$ as in~\eqref{eq1i} be given which satisfy equation~\eqref{eq1ii}.

With every $v\in V$ we associate a single vertex graph $\cG(v)$ consisting of the
vertex $v$ and $|\cL(v)|$ external edges. In~\cite{CPBMSG} the authors have shown
how the construction of Brownian motions on a finite or semi-infinite interval
by Feller~~\cite{Fe52, Fe54, Fe54a} and It\^o--McKean~\cite{ItMc63, ItMc74} (cf.\ also
\cite{Kn81}) can be extended to the case of single vertex graphs. For the convenience
of the reader, we quickly sketch the method. If $b_v=0$ and
$c_v=1$ then this trivially is a collection of $|\cL(v)|$ many standard Brownian motions
on the real line with absorption at the origin (corresponding to the vertex $v$),
mapped onto the external edges of $\cG(v)$. If $b_v=0$ and $c_v<1$ these
Brownian motions are killed by a jump to $\gD$ after holding the processes at the
origin for an independent exponentially distributed time of rate $a/c$. For
$b_v\ne 0$ one uses a Walsh process on $\cG(v)$ (see \cite{Wa78}, \cite{BaPi89}), and builds
in a time delay as well as killing, both on the scale of the local time at the
vertex. With appropriately chosen parameters for these two mechanisms,
theorem~5.7 in~\cite{CPBMSG} states that the so constructed process $X^v$ is a Brownian
motion on $\cG(v)$ such that its generator is the $1/2$ times the Laplace operator acting
on $f\in C^2_0(\cG(v))$ with boundary conditions at the vertex $v$ given by~\eqref{eq1iii}.

Next we build the graph $\cG$ by successively connecting appropriately chosen
external edges of the single vertex graphs $\cG(v)$, $v\in V$, as in
section~\ref{ssect3i}. Consider the stochastic process $X$ which is successively
constructed from the Brownian motions $X^v$ as in
subsections~\ref{ssect3ii}--\ref{ssect3iv}. Theorem~\ref{thm3xiii} states that $X$
is a Brownian motion on $\cG$ which is such that its generator has a domain which is
characterized by the same boundary conditions at each vertex $v\in V$ as for the
single vertex graphs $\cG(v)$. Therefore, $X$ is a Brownian motion on $\cG$ as in
the statement of theorem~\ref{thm1ii}, whose proof is therefore complete.

\section{Discussion of Tadpoles}	\label{sect5}
Suppose that $\cG$ is a metric graph which has one tadpole $i_t$ connected to a
vertex $v\in V$. That is, $v$ is simultaneously the initial and final vertex of $i_t$:
$\p(i_t)=(v,v)$. Figure~\ref{tad_i} shows a metric graph with a tadpole attached to
the vertex $v$.
\begin{figure}[ht]
\begin{center}
    \includegraphics[scale=.8]{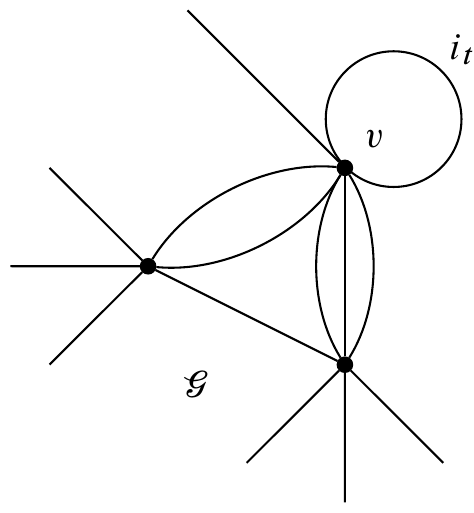}\\
    \caption{A metric graph with a tadpole $i_t$ attached to $v$.} \label{tad_i}
\end{center}
\end{figure}
Let $b_t$ be the length of $i_t$. Assume furthermore, that
we are given data $a$, $b$, $c$ as in equations~\eqref{eq1i}, \eqref{eq1ii}.
We want to construct a Brownian motion $X$ on $\cG$ the implementing boundary
conditions corresponding to these data.

Let $\cG_1$ be the metric graph obtained from $\cG$ by replacing the tadpole by two
external edges $e_1$, $e_2$, incident with $v$. Construct a Brownian motion $X_1$
on $\cG_1$ corresponding to the data $a$, $b$, $c$ as above.

Consider the real line $\R$ as a single vertex graph $\cG_2$ with the origin as the
vertex $v_0$, and edges $l_1$, $l_2$ which are isomorphic to $[0,+\infty)$,
$(-\infty, 0]$. Take a Walsh process $X_2$ on this graph which with probability
$1/2$ chooses either edge for the next excursion when at the origin. Then $X_2$ is
just a ``skew'' Brownian motion as in~\cite[p.~115]{ItMc74} which actually is not
skew. That is, it is equivalent to a standard Brownian motion on the real line.

Now join $\cG_1$ and $\cG_2$ by connecting the pairs $(e_1,l_1)$, $(e_2,l_2)$ via
two new internal edges of length $b_t/2$. Denote the resulting metric graph by $\hat
\cG$. Figure~\ref{tad_ii} shows this construction.
\begin{figure}[ht]
\begin{center}
    \includegraphics[scale=.8]{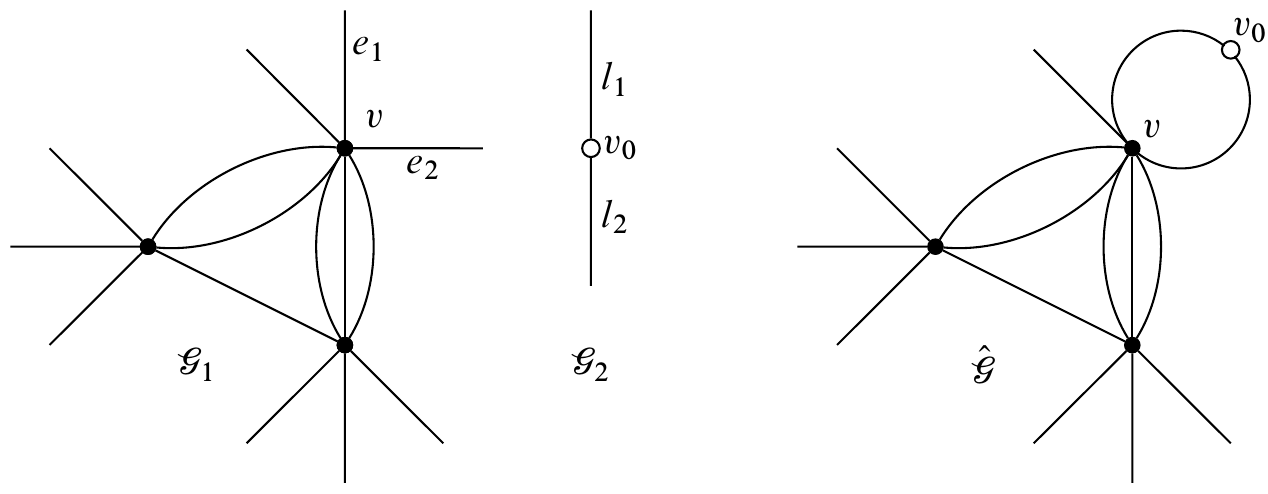}\\
    \caption{$\hat \cG$ constructed from $\cG_1$ and $\cG_2$.} \label{tad_ii}
\end{center}
\end{figure}
Construct a Brownian motion $\hat X$ on $\hat\cG$ from $X_1$ and $X_2$ as in
section~\ref{sect2}. By construction, $\hat X$ is equivalent to a standard Brownian
motion in every neighborhood of $v_0$ which is small enough such that it does not
include the vertex $v$. Therefore the additional vertex $v_0$ of $\cG$ does not
yield any non-trivial boundary condition. Thus, if we identify the open tadpole edge
$i_t^0$ with the subset of $\cG_2$ isomorphic to $(-b_t/1,b_t/2)$, then we obtain a
Brownian motion $X$ on $\cG$ implementing the desired boundary conditions.

Obviously, any (finite) number of tadpoles can be handled in the same way.

\begin{appendix}	
\section{On the Crossover Times $S_n$}	\label{appA}
We recall from section~\ref{sect2} that in terms of the process $Y$ the crossover
times $S_n$, $n\in\N$, can be described as follows. Let $Y$ start in $\xi\in\cG$.
Then $S_1$ is the hitting time of $V_c\setminus\{\xi\}$ by $Y$. In particular,
$S_1>0$ for all paths of $Y$. For $n\ge 2$, $S_n$ is the hitting time after
$S_{n-1}$ of $V_c\setminus\{K_{n-1}\}$ by $Y$. Since by construction
$Y(S_{n-1})=K_{n-1}$ and the paths of $Y$ are continuous on $[0,\zeta)$, we get
$S_n>S_{n-1}$ for all paths of $Y$. Therefore
\begin{align*}
    S_n &= \inf\,\bigl\{t>S_{n-1},\,Y(t)\in V_c\setminus\{K_{n-1}\}\bigr\}\\
        &= \inf\,\bigl\{t\ge S_{n-1},\,Y(t)\in V_c\setminus\{K_{n-1}\}\bigr\}
\end{align*}
holds.

In this appendix we prove the following

\begin{lemma}   \label{lemA}
For every $n\in\N$, $S_n$ is a stopping time relative to $\cF^Y$.
\end{lemma}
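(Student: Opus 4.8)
The plan is to prove the statement by induction on $n$, reducing everything to a single sub-lemma about the hitting time of a point after an $\cF^Y$-stopping time, which is then settled by a rational approximation that exploits the continuity of the paths of $Y$ on $[0,\zeta)$. Throughout I use that $Y$ is right continuous and adapted, hence progressively measurable relative to $\cF^Y$, and I adopt the convention $d(\gD,w)=+\infty$ for $w\in\cG$.

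First I would isolate the sub-lemma: for a fixed point $w\in\cG$ with $w\ne\gD$ and an arbitrary $\cF^Y$-stopping time $T$, the time $D_w^T=\inf\{s\ge T,\,Y(s)=w\}$ is again an $\cF^Y$-stopping time. I would establish this through the explicit representation
\begin{equation*}
    \{D_w^T\le t\}=\bigl(\{T\le t\}\cap\{Y(t)=w\}\bigr)
        \cup\bigcap_{m\ge 1}\ \bigcup_{q\in\Q\cap[0,t]}\bigl(\{T\le q\}\cap\{d(Y(q),w)<1/m\}\bigr).
\end{equation*}
Every event on the right lies in $\cF^Y_t$, since $\{T\le q\}\in\cF^Y_q\subset\cF^Y_t$ and $\{d(Y(q),w)<1/m\}\in\cF^Y_t$ for $q\le t$, while $\{Y(t)=w\}\in\cF^Y_t$; hence so does $\{D_w^T\le t\}$. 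The inclusion ``$\subseteq$'' uses continuity of $Y$ at the hitting instant (which occurs strictly before $\zeta$ because $w\ne\gD$) to produce, for each $m$, a rational $q\in[T,t]$ with $Y(q)$ within $1/m$ of $w$, the separate term $\{Y(t)=w\}$ handling the degenerate case $T=t$ with an irrational $t$. The reverse inclusion extracts a convergent subsequence of such rationals and uses continuity of $Y$ together with the path properties from subsection~\ref{ssect3ii} to identify the limiting time as a genuine hit of $w$.

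With the sub-lemma in hand the induction is short. For the base case $S_1=\inf\{s>0,\,Y(s)\in V_c\setminus\{Y(0)\}\}$ I would write $\{S_1\le t\}=\bigcup_{w\in V_c}\bigl(\{Y(0)\ne w\}\cap\{D_w^{0}\le t\}\bigr)$, where $D_w^{0}=\inf\{s>0,\,Y(s)=w\}$ is treated by the analogue of the representation above; since $\{Y(0)\ne w\}\in\cF^Y_0$ and $\{D_w^{0}\le t\}\in\cF^Y_t$, it follows that $\{S_1\le t\}\in\cF^Y_t$. For the inductive step, assume $S_{n-1}$ is an $\cF^Y$-stopping time. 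By progressive measurability, $K_{n-1}=Y(S_{n-1})$ is $\cF^Y_{S_{n-1}}$-measurable, so $\{K_{n-1}=w\}\cap\{S_{n-1}\le t\}\in\cF^Y_t$ for each $w\in V_c$. Using $Y(S_{n-1})=K_{n-1}$ and the description of $S_n$ recalled at the start of this appendix, I would decompose
\begin{equation*}
    \{S_n\le t\}=\bigcup_{w\in V_c}\Bigl(\{K_{n-1}=w\}\cap
        \bigcup_{w'\in V_c\setminus\{w\}}\{D_{w'}^{S_{n-1}}\le t\}\Bigr),
\end{equation*}
the contribution of $\{K_{n-1}=\gD\}$ being empty since there $S_{n-1}=+\infty$. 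Each $\{D_{w'}^{S_{n-1}}\le t\}$ lies in $\cF^Y_t$ by the sub-lemma applied with $T=S_{n-1}$, and each $\{K_{n-1}=w\}$ event is compatible with $\cF^Y_t$ as just noted, so $\{S_n\le t\}\in\cF^Y_t$, completing the induction.

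The main obstacle will be the proof of the sub-lemma, specifically the verification that the countable representation of $\{D_w^T\le t\}$ is \emph{exact}. Since $\cF^Y$ is the raw natural filtration of $Y$ --- not assumed here to satisfy the usual conditions --- the general d\'ebut theorem is unavailable, and one must genuinely exploit the special path structure: continuity on $[0,\zeta)$, the existence of left limits lying in $V_0$, and the fact established in subsection~\ref{ssect3ii} that reaching a vertex of $V_c$ is always a genuine crossover rather than the lifetime. These are exactly the facts needed to exclude the pathological scenario in which $Y$ approaches $w$ only through a left limit at $\zeta$ without ever equalling $w$, which would otherwise break the reverse inclusion.
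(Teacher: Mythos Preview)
Your approach closely parallels the paper's: both argue by induction on $n$, reduce to a rational-approximation identity for the relevant hitting event, and exploit the continuity of $Y$ on $[0,\zeta)$. The organizational difference---you decompose over individual target points $w\in V_c$ whereas the paper works directly with the random set $V_n=V_c\setminus\{K_{n-1}\}$---is minor.

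There is, however, a genuine gap in your reverse inclusion for the sub-lemma. Consider $\omega$ with $T(\omega)\le\zeta(\omega)\le t$ and $Y(s,\omega)\to w$ as $s\uparrow\zeta(\omega)$ but $Y(s,\omega)\ne w$ for all $s\in[T(\omega),\zeta(\omega))$. Such a path is c\`adl\`ag, continuous on $[0,\zeta)$, and has $Y(\zeta-)=w\in V_0$, so it is compatible with every pathwise hypothesis the paper actually imposes on $Z^0$ and its copies. For this $\omega$ one finds rationals $q_m\in[T,t]$ with $d(Y(q_m),w)<1/m$, placing $\omega$ in your right-hand side, yet $D_w^T(\omega)=+\infty$. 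Your appeal to subsection~\ref{ssect3ii} does not close this: what is established there is only that $Y(\zeta-)\in V_0$, not that $Y(\zeta-)\notin V_c$, nor that the left limit is attained at some time strictly before $\zeta$.

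The paper sidesteps the difficulty by proving the set identity only for $A_{n,r}=\{S_n\le r<\zeta\}$, with the restriction $r<\zeta$ built in; then any limit point $u$ of the approximating rationals satisfies $u\le r<\zeta$, and continuity on $[0,\zeta)$ applies cleanly. The full event is recovered afterwards via
\begin{equation*}
\{S_n\le t\}=\Bigl(\bigcup_{r\in\Q\cap[0,t]}A_{n,r}\cap\{\zeta\le t\}\Bigr)\cup A_{n,t},
\end{equation*}
using that $S_n<+\infty$ forces $S_n<\zeta$. Your argument can be repaired along the same lines by intersecting your rational representation with $\{r<\zeta\}$ and handling the case $\zeta\le t$ separately.
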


\begin{proof}
Set $S_0=0$, and for $n\in\N$,
\begin{equation*}
    V_n = \begin{cases}
            V_c\setminus\{\xi\},     & \text{if $n=1$},\\[1ex]
            V_c\setminus\{K_{n-1}\}, & \text{otherwise}.
          \end{cases}
\end{equation*}
For $n\in\N$, $r\in\R_+$ define
\begin{align*}
    A_{n,r} &= \{S_n \le r < \zeta\}\\[1ex]
    B_{n,r} &= \bigcap_{m\in\N}\bigcup_{u\in\Q,\,0\le u\le r} \bigl\{S_{n-1}\le u,\,
                            d\bigl(Y(u), V_n\bigr)\le 1/m,\,r < \zeta\bigr\}.
\end{align*}
We claim that for all $n\in\N$, $r\in\R_+$,
\begin{equation}    \label{eqAi}
    A_{n,r}=B_{n,r}.
\end{equation}
To prove this claim, suppose first that $\go\in A_{n,r}$. Then $S_n(\go)$ is finite,
and therefore the set
\begin{equation*}
    \bigl\{t\ge S_{n-1}(\go),\,Y(t,\go)\in V_n(\go)\bigr\}\subset [0,\zeta(\go))
\end{equation*}
is non-empty. Thus there exists a sequence $(u_i,\,i\in\N)$ in this set which decreases
to $S_n(\go)$. Since $Y(\,\cdot\,,\go)$ is continuous
on $[0,\zeta(\go))$, it follows that $Y(S_n(\go),\go)\in V_n(\go)$. By assumption,
$S_n(\go)\le r <\zeta(\go)$, and therefore the continuity of $Y(\,\cdot\,,\go)$
on $[0,\zeta(\go))$ and $S_{n-1}(\go)<S_n(\go)$ imply that for every $m\in\N$
there exists $u\in\Q\cap[S_{n-1}(\go),r]$ so that $d(Y(u,\go),V_n(\go))\le 1/m$.
Hence $\go\in B_{n,r}$.

As for the converse, suppose now that $\go\in B_{n,r}$. Then there exists a sequence
$(u_m,\,m\in\N)$ in $\Q\cap[S_{n-1}(\go),r]$ so that $d(Y(u_m,\go), V_n(\go))$ converges
to zero as $m$ tends to infinity. Since $(u_m,\,m\in\N)$ is bounded we may assume, by
selecting a subsequence if necessary, that $(u_m,\,m\in\N)$ converges to  some
$u\in[S_{n-1}(\go),r]$ as $m\to+\infty$. Thus we find that $Y(u,\go)\in V_n(\go)$,
and therefore $Y(\,\cdot\,,\go)$ hits $V_n(\go)$ in the interval $[S_{n-1}(\go),r]$.
Consequently, $S_n(\go)\le r$, and hence $\go\in A_{n,r}$, concluding the
proof of the claim.

Next we prove by induction that for every $n\in\N$, $S_n$ is an $\cF^Y$--stopping time.
Let $n=1$. By~\eqref{eqAi} for every $r\ge 0$,
\begin{equation*}
    A_{1,r} = \bigcap_{m\in\N}\bigcup_{u\in\Q,\,0\le u\le r} \bigl\{
                            d\bigl(Y(u), V_1\bigr)\le 1/m\}\cap \{r < \zeta\bigr\}.
\end{equation*}
Clearly, $\{r<\zeta\} = \{Y(r)\in\cG\}\in\cF^Y_r$. Moreover, since $V_1$ is a
deterministic set, $d(\,\cdot\,,V_1)$ is measurable from $\cG$ to $\R_+$, and
therefore $\{d(Y(u),V_1)\le 1/m\}\in\cF^Y_u \subset \cF^Y_r$. Hence
$A_{1,r}\in\cF^Y_r$. Let $t\ge 0$, and write
\begin{align*}
    \{S_1 \le t\}
        &= \{S_1<\zeta\le t\} \cup A_{1,t}\\
        &= \Bigl(\bigcup_{r\in\Q,\,0\le r\le t} \{S_1\le r <\zeta\}\cap\{\zeta\le t\}\Bigr)\cup A_{1,t}\\
        &= \Bigl(\bigcup_{r\in\Q,\,0\le r\le t} A_{1,r}\cap\{\zeta\le t\}\Bigr)\cup A_{1,t}.
\end{align*}
Therefore $\{S_1\le t\}\in\cF^Y_t$, and hence $S_1$ is a stopping time relative to
$\cF^Y$.

Now suppose that $n\in\N$, $n\ge 2$, and that $S_{n-1}$ is an $\cF^Y$--stopping
time. We show that for all $r\in\R_+$, $A_{n,r}\in\cF^Y_r$. First we remark that
since $\cG$ is a separable metric space, the metric $d$ on $\cG$ is a measurable
mapping from $(\cG\times\cG, \cB_d\otimes\cB_d)$ to $(\R_+,\cB(\R_+))$. For example,
this follows from Theorem~I.1.10 in~\cite{Pa67}, and the continuity of
$d:\cG\times\cG \to \R_+$ when $\cG\times\cG$ is equipped with the product topology.
Consider $K_{n-1}= Y(S_{n-1})$. Since $Y$ has right continuous paths it is
progressively measurable relative to $\cF^Y$ (e.g.,
\cite[Proposition~I.4.8]{ReYo91}). Thus by Proposition~I.4.9 in~\cite{ReYo91} it
follows that $K_{n-1}$ is $\cF^Y_{S_{n-1}}$--measurable. Consequently on
$\{S_{n-1}\le u\}$, $K_{n-1}$ is $\cF^Y_u$--measurable. Equation~\eqref{eqAi} reads
\begin{equation*}
    A_{n,r} = \bigcap_{m\in\N} \bigcup_{u\in\Q,\,0\le u\le r} \{S_{n-1}\le u\}
                    \cap \bigl\{d\bigl(Y(u),V_c\setminus K_{n-1}\bigr)\le 1/m\bigr\}
                    \cap \{r< \zeta\}.
\end{equation*}
It follows that $A_{n,r}\in\cF^Y_r$, as claimed. But then $\{S_n\le t\}\in\cF^Y_t$
for all $t\in\R_+$ is proved with the same argument as at the end of the discussion
of the case $n=1$.
\end{proof}

\section{Feller Semigroups and Resolvents}  \label{app_FSR}	
In this appendix we give an account of the Feller property of semigroups and
resolvents. The material here seems to be quite well-known, and our presentation of
it owes very much to~\cite{Ra56}, most notably the inversion formula for the Laplace
transform, equation~\eqref{inv_L} in connection with lemma~\ref{lem_B_6}. On the
other hand, we were not able to locate a reference where the results are collected and
stated in the form in which we employ them in the present paper. This applies
in particular to the ``mixed'' forms of the statements (iii)--(vi) in theorem~\ref{thm_B_3},
which we find especially convenient to use in this article. Therefore we also provide
proofs for some of the statements.

Assume that $(E,d)$ is a locally compact separable metric space with Borel
$\gs$--algebra denoted by $\cB(E)$. $B(E)$ denotes the space of bounded measurable
real valued functions on $E$, $\Coe$ the subspace of continuous functions vanishing
at infinity. $B(E)$ and $\Coe$ are equipped with the sup-norm $\|\,\cdot\,\|$.

The following definition is as in~\cite{ReYo91}:
\begin{definition}  \label{def_B_1}
    A \emph{Feller semigroup} is a family $U=(U_t,\,t\ge 0)$ of positive
    linear operators on $\Coe$ such that
    \begin{enum_i}
        \item $U_0=\text{id}$ and $\|U_t\|\le 1$ for every $t\ge 0$;
        \item $U_{t+s} = U_t\comp U_s$ for every pair $s$, $t\ge 0$;
        \item $\lim_{t\downarrow 0} \|U_t f - f\|=0$ for every $f\in \Coe$.
    \end{enum_i}
\end{definition}

Analogously we define
\begin{definition}  \label{def_B_2}
    A \emph{Feller resolvent} is a family $R=(R_\gl,\,\gl>0)$ of positive
    linear operators on $\Coe$ such that
    \begin{enum_i}
        \item $\|R_\gl\|\le \gl^{-1}$ for every $\gl>0$;
        \item $R_\gl - R_\mu = (\mu-\gl) R_\gl\comp R_\mu$ for every
                pair $\gl$, $\mu>0$;
        \item $\lim_{\gl\to\infty}\|\gl R_\gl f - f\|=0$ for every $f\in\Coe$.
    \end{enum_i}
\end{definition}

In the sequel we shall focus our attention on semigroups $U$ and resolvents $R$
associated with an $E$--valued Markov process, and which are \emph{a priori} defined
on $B(E)$. (In our notation, we shall not distinguish between $U$ and $R$ as defined
on $B(E)$ and their restrictions to $\Coe$.)

Let $X = (X_t,\,t\ge 0)$ be a Markov process with state space $E$, and let
$(P_x,\,x\in E)$ denote the associated family of probability measures on some measurable
space $(\gO,\cA)$ so that $P_x(X_0=x) = 1$. $E_x(\,\cdot\,)$ denotes the
expectation with respect to $P_x$. We assume throughout that for every $f\in B(E)$
the mapping
\begin{equation*}
    (t,x) \mapsto E_x\bigl(f(X_t)\bigr)
\end{equation*}
is measurable from $\R_+\times E$ into $\R$. The semigroup $U$ and resolvent $R$
associated with $X$ act on $B(E)$ as follows. For $f\in B(E)$, $x\in E$, $t\ge 0$,
and $\gl>0$ set
\begin{align}
    U_t f(x)   &= E_x\bigl(f(X_t)\bigr),  \label{eq_B_1}\\
    R_\gl f(x) &= \int_0^\infty e^{-\gl t} U_t f (x)\,dt.  \label{eq_B_2}
\end{align}
Property~(i) of Definitions~\ref{def_B_1} and \ref{def_B_2} is obviously satisfied. The
semigroup property, (ii) in Definition~\ref{def_B_1}, follows from the Markov property
of $X$, and this in turn implies the resolvent equation, (ii) of
Definition~\ref{def_B_2}. Moreover, it follows also from the Markov property of $X$
that the semigroup and the resolvent commute. On the other hand, in general neither
the property that $U$ or $R$ map $\Coe$ into itself, nor the strong continuity
property (iii) in Definitions~\ref{def_B_1}, \ref{def_B_2} hold true on $B(E)$ or on
$\Coe$.

If $W$ is a subspace of $B(E)$ the resolvent equation shows that the image of $W$
under $R_\gl$ is independent of the choice of $\gl>0$, and in the sequel we shall
denote the image by $RW$. Furthermore, for simplicity we shall write $U\Coe\subset
\Coe$, if $U_t f\in\Coe$ for all $t\ge 0$, $f\in\Coe$.

\begin{theorem} \label{thm_B_3}
The following statements are equivalent:
\begin{enum_i}
    \item $U$ is Feller.
    \item $R$ is Feller.
    \item $U\Coe\subset\Coe$, and for all $f\in\Coe$, $x\in E$,
            $\lim_{t\downarrow 0} U_t f(x) = f(x)$.
    \item $U\Coe\subset\Coe$, and for all $f\in\Coe$, $x\in E$,
            $\lim_{\gl\rightarrow \infty} \gl R_\gl f(x) = f(x)$.
    \item $R\Coe\subset\Coe$, and for all $f\in\Coe$, $x\in E$,
            $\lim_{t\downarrow 0} U_t f(x) = f(x)$.
    \item $R\Coe\subset\Coe$, and for all $f\in\Coe$, $x\in E$,
            $\lim_{\gl\rightarrow \infty} \gl R_\gl f(x) = f(x)$.
\end{enum_i}
\end{theorem}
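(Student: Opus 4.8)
The plan is to prove the six statements equivalent through a single cycle of implications, isolating the one genuinely hard step. First I would dispatch the routine implications. That $U$ is Feller forces $U\Coe\subset\Coe$ and, since sup-norm convergence implies pointwise convergence, the pointwise limit of~(iii); thus (i)$\Rightarrow$(iii). For (iii)$\Rightarrow$(iv) and (v)$\Rightarrow$(vi) I would substitute $s=\gl t$ in~\eqref{eq_B_2} to write $\gl R_\gl f(x)=\int_0^\infty e^{-s}U_{s/\gl}f(x)\,ds$ and let $\gl\to\infty$: the integrand is bounded by $\|f\|$ and converges pointwise to $f(x)$ by the assumed pointwise continuity of the semigroup, so dominated convergence yields the pointwise resolvent limit. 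For the invariance statements I would use that~\eqref{eq_B_2} is a Bochner integral whose integrand $t\mapsto e^{-\gl t}U_t f$ is norm-integrable and, under $U\Coe\subset\Coe$, takes values in the closed subspace $\Coe$; hence $R_\gl f\in\Coe$. This gives (iii)$\Rightarrow$(v) and (iv)$\Rightarrow$(vi).

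The first substantial step is (vi)$\Rightarrow$(ii), upgrading pointwise to uniform resolvent convergence. Here I would exploit only the resolvent equation and the contraction bound on the range $\cD=R_\mu\Coe$ (independent of $\mu$ by Definition~\ref{def_B_2}). For $g=R_\mu f\in\cD$ the resolvent equation gives $\gl R_\gl g-g=\frac{\mu}{\gl-\mu}R_\mu f-\frac{\gl}{\gl-\mu}R_\gl f$, and both terms are $O(1/\gl)$ in sup-norm since $\|R_\gl\|\le\gl^{-1}$; thus $\gl R_\gl g\to g$ uniformly on all of $\cD$, with no pointwise hypothesis invoked. It then remains to show that $\cD$ is dense in $\Coe$, and for this I would argue by Hahn--Banach: a finite signed measure $\nu$ annihilating $\cD$ satisfies $\int \gl R_\gl f\,d\nu=0$ for every $f\in\Coe$ and $\gl>0$, and letting $\gl\to\infty$ the pointwise convergence of~(vi), the uniform bound $\|\gl R_\gl f\|\le\|f\|$, and dominated convergence force $\int f\,d\nu=0$ for all $f\in\Coe$, whence $\nu=0$. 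Density of $\cD$ together with a three-epsilon argument then promotes the convergence to uniform on all of $\Coe$, so that $R$ is Feller.

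The main obstacle is the remaining implication (ii)$\Rightarrow$(i): reconstructing the Feller semigroup from the Feller resolvent and identifying it with the given $U$. Applying the Hille--Yosida construction to the Feller resolvent $R$ produces a strongly continuous, positive, contraction semigroup $\wt U$ on $\Coe$ (the exponentials of the Yosida approximants $\gl R_\gl$ converge in sup-norm because, under~(ii), $\gl R_\gl f\to f$ uniformly for every $f\in\Coe$) whose Laplace transform is again $R$. The delicate point is that a priori $\wt U$ and the process-semigroup $U$ of~\eqref{eq_B_1} are known only to share the same resolvent; to conclude $U=\wt U$ on $\Coe$ I would invoke the Laplace inversion formula~\eqref{inv_L} in conjunction with lemma~\ref{lem_B_6}, which recovers the values $U_t f(x)$ from the map $\gl\mapsto R_\gl f(x)$ using only the right-continuity and boundedness of $t\mapsto U_t f(x)$ supplied by~\eqref{eq_B_1}. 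Equality of the two resolvents then forces $U_t f(x)=\wt U_t f(x)$ for all $t$, $x$ and $f\in\Coe$, so that $U$ restricted to $\Coe$ coincides with the Feller semigroup $\wt U$ and is itself Feller.

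Assembling (i)$\Rightarrow$(iii)$\Rightarrow$(iv)$\Rightarrow$(vi), the side branch (iii)$\Rightarrow$(v)$\Rightarrow$(vi), and the closing arc (vi)$\Rightarrow$(ii)$\Rightarrow$(i) then links all six statements in one cycle and establishes their equivalence. I expect the inversion step to be the only place requiring real work, the rest being dominated convergence, a Bochner-integral remark, and the Hahn--Banach density argument above.
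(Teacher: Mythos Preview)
Your overall scheme is sound and the cycle closes, but the route differs from the paper's and one step is not quite clean.

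\textbf{On the difference of approach.} The paper deliberately avoids Hille--Yosida (see the remark after the theorem). For the hard direction it instead uses lemma~\ref{lem_B_5}, which gives the elementary estimate
\[
\bigl\|U_t R_\gl f - R_\gl f\bigr\|\le \tfrac{2}{\gl}\bigl(1-e^{-\gl t}\bigr)\|f\|
\]
directly from Fubini and the Markov property, so $U$ is strongly continuous on $R B(E)$ with no abstract semigroup theory. Combined with lemma~\ref{lem_B_8} (your Hahn--Banach argument), density of $R\Coe$ then promotes this to all of $\Coe$, yielding (iv)$\Rightarrow$(i) and (v)$\Rightarrow$(i) in one stroke. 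Lemma~\ref{lem_B_6} is used only in lemma~\ref{lem_B_7} to pass invariance from $R$ back to $U$, not to identify two semigroups. Your Hille--Yosida detour works too, but it is heavier machinery for what the paper does with a two-line computation; conversely, your packaging makes the logical flow a single loop rather than two intertwined cycles.

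\textbf{On the gap.} In your (ii)$\Rightarrow$(i) you invoke lemma~\ref{lem_B_6} ``using only the right-continuity \dots\ supplied by~\eqref{eq_B_1}''. But in this appendix no path regularity is assumed for $X$; the only standing hypothesis is joint measurability of $(t,x)\mapsto U_t f(x)$, so $t\mapsto U_t f(x)$ need not be right-continuous. Lemma~\ref{lem_B_6} as stated applies only to $f\in RB(E)$, precisely because lemma~\ref{lem_B_5} supplies the needed continuity there. The fix is easy: apply the inversion only for $f\in R\Coe\subset RB(E)$ to get $U_t f=\wt U_t f$ on this dense set, then extend by contractivity. A smaller point: your Bochner-integral argument for $U\Coe\subset\Coe\Rightarrow R\Coe\subset\Coe$ needs strong measurability of $t\mapsto U_t f$; this follows from separability of $\Coe$ and Pettis, but the paper's proof of lemma~\ref{lem_B_7} sidesteps the issue by checking continuity of $x\mapsto R_\gl f(x)$ via dominated convergence, which is more elementary.
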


\begin{remark}	
The equivalence of statements~(i) and~(ii) has been shown in~\cite[No.~81, p.~291]{DeMe88}
based on an application of the Hille--Yosida--theorem.
\end{remark}

We prepare a sequence of lemmas. The first one follows directly from the
dominated convergence theorem:

\begin{lemma}   \label{lem_B_4}
Assume that for $f\in B(E)$, $U_t f \rightarrow f$ as $t\downarrow 0$. Then $\gl
R_\gl f \rightarrow f$ as $\gl\to+\infty$.
\end{lemma}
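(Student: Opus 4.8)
The plan is to rescale time in the Laplace--transform representation~\eqref{eq_B_2} of the resolvent and then invoke the dominated convergence theorem, exactly as the sentence introducing the lemma indicates. Fix $f\in B(E)$ and $x\in E$, and assume $U_t f(x)\to f(x)$ as $t\da 0$. Starting from~\eqref{eq_B_2}, I would first write
\begin{equation*}
    \gl R_\gl f(x) = \gl \int_0^\infty e^{-\gl t}\, U_t f(x)\, dt
\end{equation*}
and substitute $s=\gl t$ to obtain
\begin{equation*}
    \gl R_\gl f(x) = \int_0^\infty e^{-s}\, U_{s/\gl} f(x)\, ds.
\end{equation*}
This substitution is the one genuinely useful step: it transfers the $\gl$--dependence out of the exponential weight and into the time argument of the semigroup, leaving the fixed probability density $e^{-s}\,ds$ against which the limit can be taken.

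The second step is to pass $\gl\to+\infty$ under the integral sign. For each fixed $s>0$ we have $s/\gl\da 0$, so the hypothesis gives the pointwise convergence $U_{s/\gl} f(x)\to f(x)$, whence the integrand converges to $e^{-s} f(x)$. To legitimise the interchange I would supply the dominating function $s\mapsto e^{-s}\,\|f\|$, which is integrable on $\R_+$: indeed, by~\eqref{eq_B_1} the operator $U_t$ acts as a contraction on $B(E)$, since $|U_t f(x)| = |E_x(f(X_t))|\le \|f\|$, so that $|e^{-s}\, U_{s/\gl} f(x)|\le e^{-s}\,\|f\|$ uniformly in $\gl$ and $x$. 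The measurability of $s\mapsto U_{s/\gl} f(x)$ needed to apply the theorem is covered by the standing assumption that $(t,x)\mapsto E_x(f(X_t))$ is measurable. Dominated convergence then gives
\begin{equation*}
    \lim_{\gl\to+\infty} \gl R_\gl f(x) = \int_0^\infty e^{-s} f(x)\, ds = f(x),
\end{equation*}
using $\int_0^\infty e^{-s}\,ds = 1$.

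I do not expect a genuine obstacle here; the lemma is, as advertised, an immediate application of dominated convergence, and the only point needing attention is the domination bound, which rests essentially on the contraction property of $U_t$ on $B(E)$. I would also observe that the whole argument is carried out at a fixed $x$, so the conclusion holds pointwise on $E$, and that if the hypothesis is upgraded to sup--norm convergence $\|U_t f - f\|\to 0$, then the estimate $|\gl R_\gl f(x) - f(x)|\le \int_0^\infty e^{-s}\,\|U_{s/\gl} f - f\|\,ds$, whose right--hand side is independent of $x$, upgrades the conclusion to $\|\gl R_\gl f - f\|\to 0$ as well.
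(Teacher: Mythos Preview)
Your argument is correct and is exactly the approach the paper intends: the paper's ``proof'' consists solely of the remark that the lemma follows directly from the dominated convergence theorem, and you have supplied the standard substitution $s=\gl t$ and domination $|e^{-s}U_{s/\gl}f(x)|\le e^{-s}\|f\|$ that make this precise. Your closing observation that the argument upgrades from pointwise to sup--norm convergence is also pertinent, since the lemma is used in both modes later in the appendix.
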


\begin{lemma}   \label{lem_B_5}
The semigroup $U$ is strongly continuous on $RB(E)$.
\end{lemma}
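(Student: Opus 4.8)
The plan is to take an arbitrary element $g$ of $RB(E)$, write it as $g=R_\gl f$ for some $f\in B(E)$ and a fixed $\gl>0$ (which we may do since, as noted above, the image $R_\gl B(E)$ does not depend on the choice of $\gl$), and then derive an explicit short-time formula for $U_t R_\gl f - R_\gl f$ from which strong continuity is immediate. The whole argument is an elementary manipulation of the semigroup and resolvent definitions together with one uniform norm estimate.

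First I would compute $U_t R_\gl f$. Writing out $(U_t h)(x)=E_x\bigl(h(X_t)\bigr)$ with $h=R_\gl f$ and $R_\gl f(y)=\int_0^\infty e^{-\gl s}\,U_s f(y)\,ds$, the standing measurability hypothesis on $(s,x)\mapsto E_x(f(X_s))$ together with Fubini's theorem (the integrand is dominated by $e^{-\gl s}\|f\|$, which is integrable in $s$) lets me interchange $U_t$ with the $s$-integral. The semigroup property $U_t\comp U_s=U_{t+s}$, which is a consequence of the Markov property of $X$, then yields
\begin{equation*}
    U_t R_\gl f = \int_0^\infty e^{-\gl s}\,U_{t+s} f\,ds.
\end{equation*}
The substitution $u=t+s$ gives $U_t R_\gl f = e^{\gl t}\int_t^\infty e^{-\gl u}U_u f\,du$, and subtracting off the tail from $0$ to $t$ produces
\begin{equation*}
    U_t R_\gl f - R_\gl f
        = \bigl(e^{\gl t}-1\bigr)\,R_\gl f
            - e^{\gl t}\int_0^t e^{-\gl u}\,U_u f\,du .
\end{equation*}

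Finally I would estimate both terms in the sup-norm. Since $U$ is a positivity preserving contraction, $\|U_u f\|\le\|f\|$ for all $u\ge 0$, so the second term has norm at most $e^{\gl t}\int_0^t\|f\|\,du = t\,e^{\gl t}\|f\|\to 0$ as $t\da 0$, while the first term has norm $(e^{\gl t}-1)\|R_\gl f\|\le (e^{\gl t}-1)\gl^{-1}\|f\|\to 0$. Hence $\|U_t R_\gl f - R_\gl f\|\to 0$ as $t\da 0$, which is precisely strong continuity of $U$ on $RB(E)$. There is no genuine obstacle here: the only point demanding a little care is the justification of the interchange of $U_t$ with the integral and the identification $U_t\comp U_s f=U_{t+s}f$ via Fubini and the Markov property; once the displayed identity is in hand, the conclusion follows from the contraction bound alone.
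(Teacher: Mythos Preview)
Your proof is correct and follows essentially the same route as the paper: compute $U_t R_\gl f$ via Fubini and the semigroup property, obtain an explicit expression for $U_t R_\gl f - R_\gl f$, and estimate it in sup-norm using $\|U_u f\|\le\|f\|$. The paper writes the difference as $(e^{\gl t}-1)\int_t^\infty e^{-\gl s}U_s f\,ds - \int_0^t e^{-\gl s}U_s f\,ds$ and arrives at the clean bound $2\gl^{-1}(1-e^{-\gl t})\|f\|$, whereas you regroup to $(e^{\gl t}-1)R_\gl f - e^{\gl t}\int_0^t e^{-\gl u}U_u f\,du$; the two expressions are algebraically equivalent and your separate term-by-term estimates work just as well. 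The only small point the paper makes explicit and you leave implicit is the reduction of strong continuity on all of $\R_+$ to strong continuity at $t=0$, which uses that $U$ and $R$ commute so that $RB(E)$ is $U$-invariant; you may want to add that one sentence for completeness.
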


\begin{proof}
If strong continuity at $t=0$ has been shown, strong continuity at $t>0$ follows
from the semigroup property of $U$, and the fact that $U$ and $R$ commute. Therefore
it is enough to show strong continuity at $t=0$.

Let $f\in B(E)$, $\gl>0$, $t>0$, and consider for $x\in E$ the following computation
\begin{align*}
    U_t R_\gl f(x) &- R_\gl f(x)\\
        &= \int_0^\infty e^{-\gl s} E_x\bigl(f(X_{t+s})\bigr)\,ds
                - \int_0^\infty e^{-\gl s} E_x\bigl(f(X_s)\bigr)\,ds\\
        &= e^{\gl t} \int_t^\infty e^{-\gl s} E_x\bigl(f(X_s)\bigr)\,ds
                - \int_0^\infty e^{-\gl s} E_x\bigl(f(X_s)\bigr)\,ds\\
        &= \bigl(e^{\gl t}-1\bigr) \int_t^\infty e^{-\gl s} E_x\bigl(f(X_s)\bigr)\,ds
                - \int_0^t e^{-\gl s} E_x\bigl(f(X_s)\bigr)\,ds\\
\end{align*}
where we used Fubini's theorem and the Markov property of $X$. Thus we get the
following estimation
\begin{align*}
    \bigl\|U_t R_\gl f - R_\gl f\bigr\|
        &\le \biggl(\bigl(e^{\gl t} - 1\bigr)\int_t^\infty e^{-\gl s}\,ds
                +\int_0^t e^{-\gl s}\,ds\biggl)\, \|f\|\\
        &= \frac{2}{\gl}\, \bigl(1-e^{-\gl t}\bigr)\,\|f\|,
\end{align*}
which converges to zero as $t$ decreases to zero.
\end{proof}

For $\gl>0$, $t\ge 0$, $f\in B(E)$, $x\in E$ set
\begin{equation}    \label{inv_L}
    U^\gl_t f(x) = \sum_{n=1}^\infty \frac{(-1)^{n+1}}{n!}
                        \,n\gl\, e^{n\gl t}\, R_{n\gl} f(x).
\end{equation}
Observe that, because of $n\gl\|R_{n\gl} f\| \le \|f\|$, the last sum converges in
$B(E)$.

For the proof of the next lemma we refer the reader
to~\cite[p.~477~f]{Ra56}:

\begin{lemma}   \label{lem_B_6}
For all $t\ge 0$, $f\in RB(E)$, $U^\gl_t f$ converges in $B(E)$ to $U_t f$ as
$\gl$ tends to infinity.
\end{lemma}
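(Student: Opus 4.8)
The plan is to read \eqref{inv_L} as a Laplace--inversion (approximate identity) formula and to reduce the claim to a standard concentration argument. First I would insert the definition $R_{n\gl}f(x)=\int_0^\infty e^{-n\gl s}U_sf(x)\,ds$ into \eqref{inv_L}. The interchange of summation and integration is justified by absolute convergence: since $\int_0^\infty e^{-n\gl s}\,ds=(n\gl)^{-1}$,
\[
\sum_{n=1}^\infty \frac{1}{n!}\,n\gl\,e^{n\gl t}\int_0^\infty e^{-n\gl s}\,ds
   = \sum_{n=1}^\infty \frac{e^{n\gl t}}{n!}
   = e^{e^{\gl t}}-1<\infty,
\]
so the iterated sum-integral of $\frac{1}{n!}\,n\gl\,e^{n\gl t}e^{-n\gl s}\,|U_sf(x)|$ is dominated by $\|f\|\,(e^{e^{\gl t}}-1)$. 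The inner series then equals $\gl\sum_{n\ge1}\frac{(-1)^{n+1}}{(n-1)!}z^n=\gl\,z\,e^{-z}$ at $z=e^{\gl(t-s)}$, which yields the kernel representation
\[
U^\gl_t f(x)=\int_0^\infty K_\gl(t,s)\,U_sf(x)\,ds,\qquad
K_\gl(t,s)=\gl\,e^{\gl(t-s)}\exp\bigl(-e^{\gl(t-s)}\bigr)\ge 0 .
\]

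Next I would show that $K_\gl(t,\cdot)$ is a nonnegative approximate identity concentrating at $s=t$. The substitution $w=e^{\gl(t-s)}$ (so $ds=-dw/(\gl w)$) reduces every mass computation to an elementary integral of $e^{-w}$: the total mass is $\int_0^{e^{\gl t}}e^{-w}\,dw=1-e^{-e^{\gl t}}$, which tends to $1$ as $\gl\to\infty$ for each fixed $t>0$; and for fixed $\gd>0$ the two tails become $\int_0^{e^{-\gl\gd}}e^{-w}\,dw$ (over $s>t+\gd$) and $\int_{e^{\gl\gd}}^{e^{\gl t}}e^{-w}\,dw$ (over $s<t-\gd$), both of which vanish as $\gl\to\infty$. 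Hence $\int_{|s-t|\ge\gd}K_\gl(t,s)\,ds\to0$.

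With these facts the conclusion is the usual approximate-identity estimate, now in the sup-norm. By lemma~\ref{lem_B_5} the map $s\mapsto U_sf$ is norm-continuous on $RB(E)$, and $\|U_sf\|\le\|f\|$ for all $s$. Given $\gep>0$ choose $\gd>0$ with $\|U_sf-U_tf\|<\gep$ whenever $|s-t|<\gd$, and write
\[
U^\gl_tf-U_tf=\int_0^\infty K_\gl(t,s)\bigl(U_sf-U_tf\bigr)\,ds
   +\Bigl(\int_0^\infty K_\gl(t,s)\,ds-1\Bigr)U_tf .
\]
Splitting the first integral at $|s-t|<\gd$ and $|s-t|\ge\gd$ bounds it by $\gep$ plus $2\|f\|\int_{|s-t|\ge\gd}K_\gl(t,s)\,ds$, while the second term has norm $e^{-e^{\gl t}}\|U_tf\|$; letting $\gl\to\infty$ and then $\gep\to0$ finishes the case $t>0$.

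The one genuine obstacle is the boundary value $t=0$: there the peak of $K_\gl(0,\cdot)$ sits at the endpoint $s=0$ of the range of integration, so only mass $\int_0^{1}e^{-w}\,dw=1-e^{-1}$ is captured and $U^\gl_0f\to(1-e^{-1})f$ rather than $f$ (for instance $f\equiv1$ on a compact $E$ gives $U^\gl_0f=\sum_{n\ge1}\frac{(-1)^{n+1}}{n!}=1-e^{-1}$ identically). Thus the inversion is genuinely a $t>0$ statement, and I would prove the lemma for $t>0$ by the argument above; the value $t=0$ is disposed of separately, being trivial in the applications since $U_0=\mathrm{id}$ already preserves $\Coe$ (cf.\ the proof of theorem~\ref{thm_B_3}).
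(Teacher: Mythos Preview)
The paper does not give its own proof of this lemma --- it simply refers to Ray~\cite[p.~477~f]{Ra56}. Your argument is correct and is essentially the standard one (and almost certainly Ray's): insert the integral formula for $R_{n\gl}$, sum the alternating series to obtain the Gumbel-type kernel $K_\gl(t,s)=\gl\,e^{\gl(t-s)}\exp(-e^{\gl(t-s)})$, verify that $K_\gl(t,\cdot)$ is a nonnegative approximate identity concentrating at $s=t$ for $t>0$, and conclude via the norm continuity of $s\mapsto U_sf$ on $RB(E)$ supplied by lemma~\ref{lem_B_5}. The Fubini justification and the tail estimates via the substitution $w=e^{\gl(t-s)}$ are clean and correct.

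Your observation about $t=0$ is sharp and points to a genuine (if harmless) inaccuracy in the lemma as stated: at $t=0$ the kernel has total mass $\int_0^1 e^{-w}\,dw=1-e^{-1}$, so $U^\gl_0 f\to(1-e^{-1})f\neq f$ in general, exactly as your example shows. The paper's only use of the lemma is in the proof of lemma~\ref{lem_B_7}, where it is invoked solely for $t>0$ (the case $t=0$ being covered by $U_0=\mathrm{id}$), so nothing downstream is affected. Your handling of this point --- proving the inversion for $t>0$ and disposing of $t=0$ separately --- is the right way to proceed.
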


\begin{lemma}   \label{lem_B_7}
If $U_t\Coe \subset \Coe$ for all $t\ge 0$, then $R_\gl\Coe\subset \Coe$, for all $\gl>0$.
If $R_\gl\Coe\subset \Coe$, for some $\gl>0$, and $R_\gl\Coe$ is dense in $\Coe$, then
$U_t\Coe \subset \Coe$ for all $t\ge 0$.
\end{lemma}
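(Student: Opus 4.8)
The plan is to prove the two implications separately: the first is a direct dominated-convergence argument, and the second rests on the inversion formula \eqref{inv_L} together with the resolvent equation and the density hypothesis. For the first implication, assume $U_t\Coe\subset\Coe$ for all $t\ge 0$ and fix $f\in\Coe$ and $\gl>0$. I would pass to the one-point compactification $E_\gD=E\cup\{\infty\}$, which is compact and metrizable since $E$ is locally compact, separable and metric, and extend every element of $\Coe$ by the value $0$ at $\infty$; under this convention $\Coe$ is exactly the set of $g\in C(E_\gD)$ with $g(\infty)=0$. For any sequence $x_n\to x$ in $E_\gD$ (with $x\in E$ or $x=\infty$) and each fixed $t$, one has $U_t f(x_n)\to U_t f(x)$ by continuity of the extension of $U_t f\in\Coe$, and since $|e^{-\gl t}U_t f(x_n)|\le e^{-\gl t}\|f\|$ is integrable, dominated convergence applied to \eqref{eq_B_2} gives $R_\gl f(x_n)\to R_\gl f(x)$. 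Hence $R_\gl f$, extended by $0$ at $\infty$, is sequentially continuous on the metric space $E_\gD$, i.e. $R_\gl f\in\Coe$. The only subtlety is that vanishing at infinity must be read as convergence along sequences tending to $\infty$; the compactification makes the apparent uniformity-in-$t$ problem disappear because convergence is used termwise in $t$ inside the integral.

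For the second implication, assume $R_{\gl_0}\Coe\subset\Coe$ for one $\gl_0>0$, with $R_{\gl_0}\Coe$ dense in $\Coe$. The first step is to upgrade this to $R_\mu\Coe\subset\Coe$ for \emph{all} $\mu>0$. I would use the resolvent expansion $R_\mu=\sum_{k\ge 0}(\gl_0-\mu)^k R_{\gl_0}^{\,k+1}$, which follows from the resolvent equation (statement (ii) of Definition~\ref{def_B_2}) and converges in operator norm on $B(E)$ whenever $|\gl_0-\mu|<\gl_0$, because $\|R_{\gl_0}\|\le\gl_0^{-1}$. Each $R_{\gl_0}^{\,k+1}$ maps $\Coe$ into $\Coe$, and $\Coe$ is closed in the sup-norm, so $R_\mu\Coe\subset\Coe$ for $0<\mu<2\gl_0$; a bootstrap (if the property holds at some $\mu_0$ then the same argument gives it on $(0,2\mu_0)$) then yields it for every $\mu>0$.

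The second step is the inversion. For $f\in R_{\gl_0}\Coe$ each $R_{n\gl}f$ lies in $\Coe$ by the previous step, so every partial sum of \eqref{inv_L} lies in $\Coe$; as the series converges in $B(E)$ (using $n\gl\|R_{n\gl}f\|\le\|f\|$) and $\Coe$ is closed, $U^\gl_t f\in\Coe$. Since $R_{\gl_0}\Coe\subset R B(E)$, lemma~\ref{lem_B_6} gives $U^\gl_t f\to U_t f$ in $B(E)$ as $\gl\to\infty$, whence $U_t f\in\Coe$ for every $f\in R_{\gl_0}\Coe$. Finally, because $U_t$ is a contraction on $B(E)$ and $R_{\gl_0}\Coe$ is dense in $\Coe$, approximating an arbitrary $f\in\Coe$ by $f_k\in R_{\gl_0}\Coe$ gives $U_t f_k\to U_t f$ in sup-norm with $U_t f_k\in\Coe$, so $U_t f\in\Coe$. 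I expect the hard part to be this first step of the second implication, namely passing from a single resolvent parameter to all of them so that the whole family $R_{n\gl}$ occurring in \eqref{inv_L} preserves $\Coe$; the density hypothesis is then precisely what removes the restriction $f\in R_{\gl_0}\Coe$ at the end.
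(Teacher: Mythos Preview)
Your proof is correct and follows essentially the same route as the paper: dominated convergence for the first implication, and the inversion formula~\eqref{inv_L} together with Lemma~\ref{lem_B_6} and density for the second. You are simply more explicit in two places where the paper is terse --- the vanishing-at-infinity part (handled via the one-point compactification) and the passage from one $\gl$ to all $\gl$ (handled via the Neumann series and bootstrap, whereas the paper just invokes the remark preceding the lemma that the range $R_\gl W$ is independent of $\gl$).
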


\begin{proof}
Assume that  $U_t\Coe \subset \Coe$ for all $t\ge 0$, let $f\in\Coe$, $x\in E$, and
suppose that $(x_n,\,n\in\N)$ is a sequence converging in $(E,d)$ to $x$. Then a
straightforward application of the dominated convergence theorem shows that for
every $\gl>0$, $R_\gl f(x_n)$ converges to $R_\gl f(x)$. Hence $R_\gl f\in \Coe$.

Now assume that that $R_\gl\Coe\subset \Coe$, for some and therefore for all
$\gl>0$, and that $R_\gl\Coe$ is dense in $\Coe$. Consider $f\in R\Coe$, $t>0$, and
for $\gl>0$ define $U^\gl_t f$ as in equation~\eqref{inv_L}. Because
$R_{n\gl}f\in\Coe$ and the series in formula~\eqref{inv_L} converges uniformly in
$x\in E$, we get $U^\gl_t f\in\Coe$. By lemma~\ref{lem_B_6}, we find that $U^\gl_t
f$ converges uniformly to $U_t f$ as $\gl\to+\infty$. Hence $U_t f\in\Coe$. Since
$R\Coe$ is dense in $\Coe$, $U_t$ is a contraction and $\Coe$ is closed, we get that
$U_t\Coe\subset\Coe$ for every $t\ge 0$.
\end{proof}

The following lemma is proved as a part of Theorem~17.4 in~\cite{Ka97}
(cf.\ also the proof of Proposition~2.4 in~\cite{ReYo91}).

\begin{lemma}   \label{lem_B_8}
Assume that $R\Coe\subset \Coe$, and that for all $x\in E$, $f\in\Coe$,
$\lim_{\gl\to\infty} \gl R_\gl f(x) = f(x)$. Then $R\Coe$ is dense in $\Coe$.
\end{lemma}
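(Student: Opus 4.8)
The plan is to prove density by a Hahn--Banach duality argument, which converts the given \emph{pointwise} convergence $\gl R_\gl f(x)\to f(x)$ into a genuine norm-density statement. First recall that $R\Coe$ is a linear subspace of $\Coe$: it is the image of the linear operator $R_\gl$, and by the resolvent equation this image is the same for every $\gl>0$, so $R_\gl f\in R\Coe$ for \emph{every} $\gl>0$ and $f\in\Coe$. By the standard corollary of the Hahn--Banach theorem, a linear subspace of a Banach space is dense precisely when the only continuous linear functional annihilating it is the zero functional. Since $(E,d)$ is locally compact and separable, the Riesz representation theorem identifies the dual of $\Coe$ with the space of finite signed (regular) Borel measures on $E$. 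Hence it suffices to show: if $\mu$ is a finite signed Borel measure with $\int_E R_\gl f\,d\mu=0$ for all $f\in\Coe$ and all $\gl>0$, then $\mu=0$.

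So I would fix such a $\mu$ and multiply the vanishing relation by $\gl$, obtaining $\int_E \gl R_\gl f\,d\mu=0$ for all $\gl>0$. The next step is to let $\gl\to\infty$ inside the integral. By hypothesis $\gl R_\gl f(x)\to f(x)$ for every $x\in E$, while by Definition~\ref{def_B_2}(i) the operators $\gl R_\gl$ are contractions, so that $|\gl R_\gl f(x)|\le \|\gl R_\gl f\|\le\|f\|$ uniformly in $x$ and $\gl$. Since the constant $\|f\|$ is $|\mu|$--integrable ($\mu$ being finite), the dominated convergence theorem applies and gives
\begin{equation*}
    0 = \lim_{\gl\to\infty}\int_E \gl R_\gl f\,d\mu = \int_E f\,d\mu .
\end{equation*}
Thus $\int_E f\,d\mu=0$ for every $f\in\Coe$, which forces $\mu=0$, and the density of $R\Coe$ follows.

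The computational content here is slight; the one conceptual point that needs care is precisely that only \emph{pointwise} convergence of $\gl R_\gl f$ is available, not norm convergence — on a non-compact $E$ the latter is strictly stronger and is not granted by the hypotheses. A naive attempt to show $\gl R_\gl f\to f$ in $\Coe$ directly (which would instantly yield density, since $\gl R_\gl f\in R\Coe$) would therefore fail. The duality device is exactly what allows the weak hypothesis to suffice: after pairing with an arbitrary annihilating measure, passing the limit through the integral requires only the pointwise convergence together with the uniform bound $\|\gl R_\gl f\|\le\|f\|$. The remaining ingredients — the Hahn--Banach density criterion and the Riesz identification of the dual of $\Coe$ — are standard and use only the standing assumption that $(E,d)$ is locally compact and separable.
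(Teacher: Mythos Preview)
Your proof is correct and is precisely the standard Hahn--Banach/Riesz argument; the paper does not supply its own proof but refers to Kallenberg's Theorem~17.4 and the proof of Proposition~III.2.4 in Revuz--Yor, and the latter proceeds exactly as you do (annihilating measure, uniform bound $\|\gl R_\gl f\|\le\|f\|$, dominated convergence).
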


If for all $f\in\Coe$, $x\in E$, $U_t f(x)$ converges to $f(x)$ as $t$ decreases to
zero, then similarly as in the proof of lemma~\ref{lem_B_4} we get that $\gl R_\gl
f(x)$ converges to $f(x)$ as $\gl\to+\infty$. Thus we obtain the following

\begin{corollary}   \label{cor_B_9}
Assume that $R\Coe\subset \Coe$, and that for all $x\in E$, $f\in\Coe$,
$\lim_{t\downarrow 0} U_t f(x) = f(x)$. Then $R\Coe$ is dense in $\Coe$.
\end{corollary}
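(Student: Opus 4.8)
The plan is to reduce the corollary to lemma~\ref{lem_B_8}, whose hypotheses are $R\Coe\subset\Coe$ together with the pointwise convergence $\gl R_\gl f(x)\to f(x)$ as $\gl\to\infty$. Since $R\Coe\subset\Coe$ is already assumed, the only thing left to establish is that the given pointwise convergence $U_t f(x)\to f(x)$ as $t\da 0$ forces $\gl R_\gl f(x)\to f(x)$ as $\gl\to\infty$, for every fixed $f\in\Coe$ and $x\in E$.

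First I would rewrite the resolvent in scaled form. Starting from the definition~\eqref{eq_B_2} and substituting $s=\gl t$, one obtains
\begin{equation*}
    \gl R_\gl f(x) = \gl\int_0^\infty e^{-\gl t}\,U_t f(x)\,dt
                    = \int_0^\infty e^{-s}\,U_{s/\gl} f(x)\,ds.
\end{equation*}
For each fixed $s>0$ the argument $s/\gl$ tends to $0$ as $\gl\to\infty$, so the hypothesis yields $U_{s/\gl} f(x)\to f(x)$ pointwise in $s$. The contraction bound $\|U_t\|\le 1$ gives $|U_{s/\gl}f(x)|\le\|f\|$, so the integrand is dominated by the integrable function $s\mapsto\|f\|\,e^{-s}$, and the dominated convergence theorem applies. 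This is exactly the argument already used for lemma~\ref{lem_B_4}, the only difference being that convergence is read off pointwise in $x$ rather than in norm.

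Carrying out the limit gives
\begin{equation*}
    \lim_{\gl\to\infty}\gl R_\gl f(x) = \int_0^\infty e^{-s} f(x)\,ds = f(x)
\end{equation*}
for every $f\in\Coe$, $x\in E$. Together with the standing assumption $R\Coe\subset\Coe$, this is precisely the hypothesis of lemma~\ref{lem_B_8}, which then yields that $R\Coe$ is dense in $\Coe$, completing the proof.

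There is essentially no hard step here: the entire content is the interchange of limit and integral, which is immediate from $\|U_t\|\le 1$. The one point to keep in mind is that lemma~\ref{lem_B_4} states its conclusion as a \emph{norm} limit, whereas here we have and need only the \emph{pointwise} version; I would therefore not quote lemma~\ref{lem_B_4} verbatim but instead reproduce its short dominated-convergence argument for a fixed point $x$, as above.
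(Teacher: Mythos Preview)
Your proposal is correct and matches the paper's own argument essentially verbatim: the paper also reduces to lemma~\ref{lem_B_8} by observing that the dominated-convergence computation behind lemma~\ref{lem_B_4} works just as well pointwise, yielding $\gl R_\gl f(x)\to f(x)$ from $U_t f(x)\to f(x)$. Your remark that one should run the argument at a fixed point $x$ rather than invoke lemma~\ref{lem_B_4} literally is exactly the nuance the paper signals with the phrase ``similarly as in the proof of lemma~\ref{lem_B_4}''.
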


Now we can come to the

\begin{proof}[Proof of theorem~\ref{thm_B_3}]
We show first the equivalence of statements~(i), (ii), (iv), and (vi):

\vspace{.5\baselineskip}\noindent
``(i)\Ra(ii)'' Assume that $U$ is Feller. From lemma~\ref{lem_B_7} it follows that
$R_\gl\Coe\subset\Coe$, $\gl>0$. Let $f\in\Coe$. Since $U$ is strongly continuous on
$\Coe$, lemma~\ref{lem_B_4} implies that $\gl R_\gl f$ converges to $f$ as $\gl$ tends
to $+\infty$. Hence $R$ is Feller.

\vspace{.25\baselineskip}\noindent
``(ii)\Ra(vi)'' This is trivial.

\vspace{.25\baselineskip}\noindent
``(vi)\Ra(iv)'' By lemma~\ref{lem_B_8}, $R\Coe$ is dense in $\Coe$, and therefore
lemma~\ref{lem_B_7} entails that $U\Coe\subset\Coe$.

\vspace{.25\baselineskip}\noindent
``(iv)\Ra(i)'' By lemmas~\ref{lem_B_7} and \ref{lem_B_8},  $R\Coe$ is dense in
$\Coe$, and therefore by lemma~\ref{lem_B_5} $U$ is strongly continuous on $\Coe$. Thus
$U$ is Feller.

\vspace{.25\baselineskip}
Now we prove the equivalence of~(i), (iii), and (v):

\vspace{.25\baselineskip}\noindent
``(i)\Ra(iii)'' This is trivial.

\vspace{.25\baselineskip}\noindent
``(iii)\Ra(v)'' This follows directly from Lemma~\ref{lem_B_7}.

\vspace{.25\baselineskip}\noindent
``(v)\Ra(i)'' By corollary~\ref{cor_B_9}, $R\Coe$ is dense in $\Coe$, hence it follows
from lem\-ma~\ref{lem_B_7} that $U\Coe\subset\Coe$. Furthermore, lemma~\ref{lem_B_5}
implies the strong continuity of $U$ on $R\Coe$, and by density therefore on $\Coe$.
(i) follows.
\end{proof}
\end{appendix}


\begin{thebibliography}{10}

\bibitem{AlR89}
S.~Albeverio and M.~R{\"o}ckner, \emph{{C}lassical {D}irichlet forms on
  topological spaces - {C}onstruction of an associated diffusion process},
  Probab. Th. Rel. Fields \textbf{83} (1989), 405--434.

\bibitem{Ba00}
L.~Bachelier, \emph{{T}h{\'e}orie de la sp{\'e}culation}, Ann. Scient. E.N.S.
  \textbf{17} (1900), 21--86.

\bibitem{BaPi89}
M.~Barlow, J.~Pitman, and M.~Yor, \emph{{O}n {W}alsh's {B}rownian motion},
  S{\'e}minaire de Probabilit{\'e}s XXIII (J.~Az{\`e}ma, P.~A. Meyer, and
  M.~Yor, eds.), Lecture Notes in Mathematics, no. 1372, Springer--Verlag,
  Berlin, Heidelberg, New York, 1989, pp.~275--293.

\bibitem{BaPi89a}
\bysame, \emph{{U}ne extension multidimensionnelle de la loi de l' arc sinus},
  S{\'e}minaire de Probabilit{\'e}s XXIII (J.~Az{\`e}ma, P.~A. Meyer, and
  M.~Yor, eds.), Lecture Notes in Mathematics, no. 1372, Springer--Verlag,
  Berlin, Heidelberg, New York, 1989, pp.~294--313.

\bibitem{Ba91}
H.~Bauer, \emph{{W}ahrscheinlichkeitstheorie}, fourth ed., de~Gruyter, Berlin,
  New York, 2002.

\bibitem{BaCh84}
J.~R. Baxter and R.~V. Chacon, \emph{{T}he equivalence of diffusions on
  networks to {B}rownian motion}, Contemporary Math. \textbf{26} (1984),
  33--48.

\bibitem{BDe09}
O.~B{\'e}nichou and J.~Desbois, \emph{{E}xit and occupation times for
  {B}rownian motion on graphs with general drift and diffusion constant},
  Journal of Physics A \textbf{42} (2009), 015004.

\bibitem{Bi84}
J.~M. Bismut, \emph{{L}arge {D}eviations and the {M}alliavin {C}alculus},
  Birkh{\"a}user, Boston, 1984.

\bibitem{Bi86}
\bysame, \emph{{T}he {A}tiyah-{S}inger index theorem for families of {D}irac
  operators: two heat equation proofs}, Inventiones mathematicae \textbf{83}
  (1986), 91--151.

\bibitem{Bl57}
R.~Blumenthal, \emph{{A}n extended {M}arkov property}, Trans. American Math.
  Soc. \textbf{85} (1957), 52--72.

\bibitem{BlGe68}
R.~M. Blumenthal and R.~K. Getoor, \emph{{M}arkov {P}rocesses and {P}otential
  {T}heory}, Academic Press, New York and London, 1968.

\bibitem{CoDe02}
A.~Comtet, J.~Desbois, and S.N. Majumdar, \emph{{T}he local time distribution
  of a particle diffusing on a graph}, Journal of Physics A \textbf{35} (2002),
  L687--L694.

\bibitem{DeJa93}
D.~S. Dean and K.~M. Jansons, \emph{{B}rownian excursions on combs}, J. Stat.
  Phys. \textbf{70} (1993), 1313--1332.

\bibitem{DeMe88}
C.~Dellacherie and P.-A. Meyer, \emph{{P}robabilities and {P}otential {C}},
  North--Holland, 1988.

\bibitem{De02}
J.~Desbois, \emph{{O}ccupation times distribution for {B}rownian motion on
  graphs}, Journal of Physics A \textbf{35} (2002), L673--L678.

\bibitem{DoVa75}
M.~Donsker and S.~R.~S. Varadhan, \emph{{A}symptotic evaluation of certain
  {W}iener integrals for large time}, Functional Integration and Its
  Applications (A.~M. Arthurs, ed.), Oxford Univ. Press, London and New York,
  1975, pp.~15--33.

\bibitem{Dy61}
E.~B. Dynkin, \emph{{D}ie {G}rundlagen der {T}heorie der {M}arkoffschen
  {P}rozesse}, Springer--Verlag, Berlin, G\"ottingen, Heidelberg, 1961.

\bibitem{Dy65a}
{E.B.} Dynkin, \emph{{M}arkov {P}rocesses}, vol.~1, Springer-Verlag, Berlin,
  Heidelberg, New York, 1965.

\bibitem{Dy65b}
\bysame, \emph{{M}arkov {P}rocesses}, vol.~2, Springer-Verlag, Berlin,
  Heidelberg, New York, 1965.

\bibitem{DyJu69}
{E.B.} Dynkin and {A.A.} Juschkewitsch, \emph{{S}{\"a}tze und {A}ufgaben
  {\"u}ber {M}arkoffsche {P}rozesse}, Springer-Verlag, Berlin, Heidelberg, New
  York, 1969.

\bibitem{Ei05}
A.~Einstein, \emph{{\"U}ber die von der molekularkinetischen {T}heorie der
  {W}{\"a}rme geforderte {B}ewegung von in ruhenden {F}l{\"u}ssigkeiten
  suspendierten {T}eilchen}, Ann. d. Phys. \textbf{17} (1905), 549--560.

\bibitem{Ei06}
\bysame, \emph{{Z}ur {T}heorie der {B}rownschen {B}ewegung}, Ann. d. Phys.
  \textbf{19} (1906), 371--381.

\bibitem{EiKa96}
N.~Eisenbaum and H.~Kaspi, \emph{{O}n the {M}arkov property of local time for
  {M}arkov processes on graphs}, Stoch. Processes Appl. \textbf{64} (1996),
  153--172.

\bibitem{Fe52}
W.~Feller, \emph{{T}he parabolic differential equations and the associated
  semi-groups of transformations}, Ann. Math. \textbf{3} (1952), 468--519.

\bibitem{Fe54}
\bysame, \emph{{D}iffusion processes in one dimension}, Trans. American Math.
  Soc. \textbf{77} (1954), 1--31.

\bibitem{Fe54a}
\bysame, \emph{{T}he general diffusion operator and positivity preserving
  semi-groups in one dimension}, Ann. Math. \textbf{60} (1954), 417--436.

\bibitem{Fe48}
R.~P. Feynman, \emph{{S}pace-time approach to non-relativistic quantum
  mechanics}, Rev. Mod. Phys. \textbf{20} (1948), 367--387.

\bibitem{FoCh78}
J.~L. Folks and R.~S. Chhikara, \emph{{T}he {I}nverse {G}aussian {D}istribution
  and its {S}tatistical {A}pplication - {A} {R}eview}, J. Royal Statist. Soc. B
  \textbf{40} (1978), 263--289.

\bibitem{FrWe93}
A.~I. Freidlin and A.~D. Wentzell, \emph{{D}iffusion processes on graphs and
  the averaging principle}, Ann. Probab. \textbf{21} (1993), 2215--2245.

\bibitem{FrSh00}
M.~Freidlin and S.-J. Sheu, \emph{{D}iffusion processes on graphs: stochastic
  differential equations, large deviation principle}, Probab. Theory Rel.
  Fields \textbf{116} (2000), 181--220.

\bibitem{Fr94}
A.~Friedman and C.~Huang, \emph{{D}iffusion in network}, J. Math. Anal. Apl.
  \textbf{183} (1994), 352--384.

\bibitem{Fu80}
M.~Fukushima, \emph{{D}irichlet {F}orms and {M}arkov {P}rocesses},
  North-Holland, Amsterdam and Tokyo, 1980.

\bibitem{GlJa81}
J.~Glimm and A.~Jaffe, \emph{{Q}uantum {P}hysics, {A} {F}unctional {I}ntegral
  {P}oint of {V}iew}, Springer, Berlin, Heidelberg, New York, 1981.

\bibitem{Gr67}
L.~Gross, \emph{{A}bstract {W}iener spaces}, Proc. 5th Berkeley Symp. on Math.
  Statist. and Probability, Univ. California Press, Berkeley, 1967.

\bibitem{Gr99}
N.~Grunewald, \emph{{M}artingales on graphs}, Ph.D. thesis, Mathematics
  Institute, University of Warwick, 1999.

\bibitem{Hi75}
T.~Hida, \emph{{A}nalysis of {B}rownian {F}unctionals}, Carleton Math. Lecture
  Notes No. 13, Carleton, 1975.

\bibitem{HiKu93}
T.~Hida, {H.-H.} Kuo, J.~Potthoff, and L.~Streit, \emph{{W}hite {N}oise ---
  {A}n {I}nfinite {D}imensional {C}alculus}, Kluwer Academic Publishers,
  Dordrecht, 1993.

\bibitem{Hu56}
G.~A. Hunt, \emph{{S}ome theorems concerning {B}rownian motion}, Trans.
  American Math. Soc. \textbf{81} (1956), 294--319.

\bibitem{It44}
K~It{\^o}, \emph{{S}tochastic integrals}, Proc. Imp. Acad. Tokyo \textbf{20}
  (1944), 519--524.

\bibitem{It46}
K.~It\^o, \emph{{O}n a stochastic integral equation}, Proc. Imp. Acad. Tokyo
  \textbf{22} (1946), 32--35.

\bibitem{ItMc63}
K.~It\^o and H.~P. McKean~Jr., \emph{{B}rownian motions on a half line},
  Illinois J. Math. \textbf{7} (1963), 181--231.

\bibitem{ItMc74}
\bysame, \emph{{D}iffusion {P}rocesses and their {S}ample {P}aths}, 2nd ed.,
  Springer--Verlag, Berlin, Heidelberg, New York, 1974.

\bibitem{Ju05}
D.~Jungnickel, \emph{{G}raphs, {N}etworks and {A}lgorithms}, Springer-Verlag,
  Berlin, Heidelberg, New York, 2007.

\bibitem{Ka49}
M.~Kac, \emph{{O}n distributions of certain {W}iener functionals}, Trans.
  American Math. Soc. \textbf{65} (1949), 1--13.

\bibitem{Ka50}
\bysame, \emph{{O}n some connections between probability theory and
  differential equations}, Proc. 2nd Berk. Symp. Math. Statist. Probability,
  1950, pp.~189--215.

\bibitem{Ka97}
O.~Kallenberg, \emph{{F}oundations of modern probability}, Springer, Berlin,
  Heidelberg, New York, 2002.

\bibitem{KaSh91}
I.~Karatzas and S.~E. Shreve, \emph{{B}rownian {M}otion and {S}tochastic
  {C}alculus}, 2nd ed., Springer-Verlag, Berlin, Heidelberg, New York, 1991.

\bibitem{Kn81}
{F.B.} Knight, \emph{{E}ssentials of {B}rownian {M}otion and {D}iffusion},
  Mathematical Surveys and Monographs, vol.~18, American Mathematical Society,
  Providence, Rhode Island, 1981.

\bibitem{KoPo07d}
V.~Kostrykin, J.~Potthoff, and R.~Schrader, \emph{{H}eat kernels on metric
  graphs and a trace formula}, {A}dventures in {M}athematical {P}hysics
  (F.~Germinet and P.~Hislop, eds.), Contemporary Mathematics, no. 447,
  American Math. Soc., Providence, Rhode Island, 2007.

\bibitem{BMMG0}
\bysame, \emph{{B}rownian motions on metric graphs: {F}eller {B}rownian motions
  on intervals revisited}, arXiv: 1008.3761, August 2010.

\bibitem{KoPo09c}
\bysame, \emph{{C}ontraction {S}emigroups on {M}etric {G}raphs}, Analysis on
  Graphs and Its Applications (Providence) (P.~Exner, J.~P. Keating,
  P.~Kuchment, T.~Sunada, and A.~Teplyaev, eds.), Proc. Symp. Pure Math.,
  vol.~77, Amer. Math. Soc., 2010, pp.~423--458.

\bibitem{CPBMSG}
V.~Kostrykin, J.~Potthoff, and R.~Schrader, \emph{{C}onstruction of the {P}aths
  of {B}rownian {M}otions on {S}tar {G}raphs}, arXiv: 1102.4533, February 2011.

\bibitem{KoPo11}
V.~Kostrykin, J.~Potthoff, and R.~Schrader, \emph{{F}inite propagation speed
  for solutions of the wave equation on metric graphs}, arXiv:1106.0817, June
  2011.

\bibitem{KoSc99}
V.~Kostrykin and R.~Schrader, \emph{{K}irchhoff's rule for quantum wires}, J.
  Phys. A: Math. Gen. \textbf{32} (1999), 595--630.

\bibitem{KoSc00}
\bysame, \emph{{K}irchhoff's rule for quantum wires {II}: {T}he inverse problem
  with possible applications to quantum computers}, Fortschr. Phys. \textbf{48}
  (2000), 703--716.

\bibitem{KoSc06}
\bysame, \emph{{L}aplacians on metric graphs: {E}igenvalues, resolvents and
  semigroups}, Quantum Graphs and Their Applications (Providence, RI)
  (G.~Berkolaiko, R.~Carlson, S.A. Fulling, and P.~Kuchment, eds.), Contemp.
  Math., vol. 415, Amer. Math. Soc., 2006.

\bibitem{KoSc06a}
\bysame, \emph{{T}he inverse scattering problem for metric graphs and the
  traveling salesman problem}, preprint 0603010, arXiv:math-ph, 2006.

\bibitem{Kr95}
W.~B. Krebs, \emph{{B}rownian motion on a continuum tree}, Probab. Theory Rel.
  Fields \textbf{101} (1995), 421--433.

\bibitem{Ku04}
P.~Kuchment, \emph{{Q}uantum graphs {I}: {S}ome basic structures}, Waves in
  Random and Complex Media \textbf{14} (2004), S107--S128.

\bibitem{Le06}
A.~Lejay, \emph{{O}n the constructions of the skew {B}rownian motion}, Probab.
  Surveys \textbf{3} (2006), 413--466.

\bibitem{Le37}
P.~L{\'e}vy, \emph{{T}h{\'e}orie de l'{A}ddition des {V}ariables
  {A}l{\'e}atoires}, Gauthier-Villars, Paris, 1937.

\bibitem{Le48}
P.~L\`evy, \emph{{P}rocessus {S}tochastiques et {M}ouvement {B}rownien}, 2nd
  ed., Gauthier-Villars, Paris, 1965, 1st ed. 1948.

\bibitem{Ma78}
P.~Malliavin, \emph{{G}\'eometrie {D}iff\'erentielle {S}tochastique}, Presse de
  l' Universit\'e de Montr\'eal, Montr\'eal, 1978.

\bibitem{Ma97}
P.~Malliavin, \emph{{S}tochastic {A}nalysis}, Springer--Verlag, Berlin,
  Heidelberg, New York, 1997.

\bibitem{Ne54}
E.~Nelson, \emph{{R}egular probability measures on function space}, Ann. Math.
  \textbf{67} (1954), 630--643.

\bibitem{Ne64}
\bysame, \emph{{F}eynman integrals and the {S}chr{\"o}dinger equation}, J.
  Math. Phys. \textbf{5} (1964), 332--343.

\bibitem{Ne67}
\bysame, \emph{{D}ynamical {T}heories of {B}rownian {M}otion}, Princeton Univ.
  Press, 1967.

\bibitem{Ne73}
E.~Nelson, \emph{{P}robability theory and {E}uclidean quantum field theory},
  Constructive Quantum Field Theory (Berlin, Heidelberg, New York) (G.~Velo and
  A.~Wightman, eds.), vol.~25, Lecture Notes in Physics, no.~25, Springer,
  1973, pp.~94--124.

\bibitem{Pa67}
K.~R. Parthasarathy, \emph{{P}robability {M}easures on {M}etric {S}paces},
  Academic Press, New York and London, 1967.

\bibitem{Ra56}
D.~Ray, \emph{{S}tationary {M}arkov processes with continuous paths}, Trans.
  American Math. Soc. \textbf{82} (1956), 452--493.

\bibitem{ReYo91}
D.~Revuz and M.~Yor, \emph{{C}ontinuous {M}artingales and {B}rownian {M}otion},
  Springer--Verlag, Berlin, Heidelberg, New York, 1999.

\bibitem{Ro83}
L.~C.~G. Rogers, \emph{{I}t{\^o} excursion theory via resolvents}, Probab. Th.
  Rel. Fields \textbf{63} (1983), 237--255.

\bibitem{Sa86b}
Th.~S. Salisbury, \emph{{C}onstruction of right processes from excursions},
  Probab. Th. Rel. Fields \textbf{73} (1986), 351--367.

\bibitem{Sa86a}
\bysame, \emph{{O}n the {I}t{\^o} excursion process}, Probab. Th. Rel. Fields
  \textbf{73} (1986), 319--350.

\bibitem{Sc65}
M.~Schilder, \emph{{S}ome asymptotic formulas for {W}iener integrals}, Trans.
  Amer. Math. Soc. \textbf{125} (1965), 275--286.

\bibitem{Sch09}
R.~Schrader, \emph{{F}inite propagation speed and causal free quantum fields on
  networks}, J. Phys. A: Math. Theor. \textbf{42} (2009), 495401.

\bibitem{Sc15}
E.~Schr\"odinger, \emph{{Z}ur {T}heorie der {F}all- und {S}teigversuche an
  {T}eilchen mit {B}rownscher {B}ewegung}, Physikal. Z. \textbf{16} (1915),
  289--295.

\bibitem{Sch58}
J.~Schwinger, \emph{{O}n the {E}uclidean structure of relativistic field
  theory}, Proc. N. A. S. \textbf{44} (1958), 956 -- 965.

\bibitem{Sch59}
\bysame, \emph{{E}uclidean quantum electrodynamics}, Phys. Rev. \textbf{115}
  (1959), 721--731.

\bibitem{Si75}
M.~L. Silverstein, \emph{{S}ymmetric {M}arkov {P}rocesses}, Lecture Notes in
  Mathematics No. 426, Springer, Berlin, Heidelberg, New York, 1975.

\bibitem{Si74}
B.~Simon, \emph{{T}he {$P(\phi)_2$} {E}uclidean ({Q}uantum) {F}ield {T}heory},
  Princeton Univ. Press, Princeton, 1974.

\bibitem{Si79}
\bysame, \emph{{F}unctional {I}ntegration and {Q}uantum {P}hysics}, Academic
  Press, New York, San Francisco, London, 1979.

\bibitem{Sy69}
K.~Symanzik, \emph{{E}uclidean quantum field theory}, Local Quantum Theory
  (R.~Jost, ed.), Academic Press, New York, 1969.

\bibitem{Tw45}
M.~C.~K. Tweedie, \emph{{I}nverse {S}tatistical {V}ariates}, Nature
  \textbf{155} (1945), 453.

\bibitem{Va85}
N.~Th. Varopoulos, \emph{{L}ong range estimates for {M}arkov chains}, Bull.
  Sci. Math. \textbf{109} (1985), 225--252.

\bibitem{Sm06}
M.~von Smoluchowski, \emph{{Z}ur kinetischen {T}heorie der {B}rownschen
  {M}olekularbewegung und der {S}uspensionen}, Ann. d. P \textbf{21} (1906),
  756--780.

\bibitem{Sm15}
\bysame, \emph{{N}otiz {\"u}ber die {B}erechnung der {B}rownschen
  {M}olekularbewegung bei der {E}hrenhaft-{M}illikan'schen
  {V}ersuchsanordnung}, Phys. Z. \textbf{XVI} (1915), 318--321.

\bibitem{Wa78}
{J.B.} Walsh, \emph{{A} diffusion with a discontinuous local time},
  Ast\'erisque \textbf{52--53} (1978), 37--45.

\bibitem{Wi23}
N.~Wiener, \emph{{D}ifferential space}, J. Math. Phys. \textbf{2} (1923),
  131--174.

\bibitem{Wi38}
\bysame, \emph{{T}he homogeneous chaos}, Amer. J. Math. \textbf{60} (1938),
  897--936.

\bibitem{Wi79}
D.~Williams, \emph{{D}iffusions, {M}arkov {P}rocesses, and {M}artingales}, John
  Wiley {\&} Sons, Chichester, New York, Brisbane, Toronto, 1979.

\end{thebibliography}
\providecommand{\bysame}{\leavevmode\hbox to3em{\hrulefill}\thinspace}
\providecommand{\MR}{\relax\ifhmode\unskip\space\fi MR }
\providecommand{\MRhref}[2]{%
  \href{http://www.ams.org/mathscinet-getitem?mr=#1}{#2}
}
\providecommand{\href}[2]{#2}

\end{document}